\newtheorem*{acknowledgements}{Acknowledgements}
\newtheorem{theorem}{Theorem}[section]
\newtheorem{lemma}[theorem]{Lemma}
\newtheorem{proposition}[theorem]{Proposition}
\newtheorem{remark}[theorem]{Remark}
\numberwithin{equation}{section}
\let\oldsqrt\sqrt
\def\sqrt{\mathpalette\DHLhksqrt}
\def\DHLhksqrt#1#2{%
\setbox0=\hbox{$#1\oldsqrt{#2\,}$}\dimen0=\ht0
\advance\dimen0-0.2\ht0
\setbox2=\hbox{\vrule height\ht0 depth -\dimen0}%
{\box0\lower0.4pt\box2}}
\newcommand{\bhs}[1]{\mathfrak B_{#1}}
\renewcommand{\tilde}{\widetilde}
\renewcommand{\bar}{\overline}
\renewcommand{\Re}{\operatorname{Re}}
\renewcommand{\hat}[1]{\widehat{#1}}
\newcommand\pa{\partial}
\newcommand{\lrp}[1]{\left( {#1} \right)}
\newcommand\cf{cf\@. }
\newcommand\ie{i\@.e\@. }
\newcommand\eps\varepsilon
\renewcommand\epsilon\varepsilon
\newcommand\bn{\mathbf{n}}
\newcommand\fc{\operatorname{fc}}
\font\k=cmr7
\newcommand {\di}{\mbox{\k dis}}
\newcommand\hD{\widehat{D}}
\newcommand\bW{\overline{W}}
\newcommand\Rep{\operatorname{Rep}}
\renewcommand\det{\operatorname{det}}
\newcommand\even{\operatorname{even}}
\DeclareMathOperator*{\FP}{\operatorname{FP}}
\newcommand\Hom{\operatorname{Hom}}
\newcommand\Gal{\operatorname{Gal}}
\newcommand\Id{\operatorname{Id}}
\renewcommand\Im{\operatorname{Im}}
\newcommand\phg{\operatorname{phg}}
\newcommand\bsl{\backslash}
\newcommand\rank{\operatorname{rank}}
\newcommand\level{\operatorname{level}}
\newcommand\Restr{\operatorname{Res}}
\renewcommand\Re{\operatorname{Re}}
\DeclareMathOperator*\Res{\operatorname{Res}}
\newcommand\Spec{\operatorname{Spec}}
\newcommand\Tr{\operatorname{Tr}}
\newcommand\tr{\operatorname{tr}}
\newcommand\pr{\operatorname{pr}}
\newcommand\GL{\operatorname{GL}}
\newcommand\spane{\operatorname{span}}
\newcommand\hE{\widehat{E}}
\newcommand\bw{\overline{w}}
\newcommand\paperintro%
\newcommand\paperbody%
\newcommand\bbA{\mathbb{A}}
\newcommand\bbC{\mathbb{C}}
\newcommand\bbF{\mathbb{F}}
\newcommand\bbG{\mathbb{G}}
\newcommand\bbH{\mathbb{H}}
\newcommand\bbK{F}
\newcommand\bbN{\mathbb{N}}
\newcommand\bbP{\mathbb{P}}
\newcommand\bbQ{\mathbb{Q}}
\newcommand\bbR{\mathbb{R}}
\newcommand\bbT{\mathbb{T}}
\newcommand\bbZ{\mathbb{Z}}
\renewcommand\bn{\overline{n}}
\newcommand\cA{\mathcal{A}}
\newcommand\cC{\mathcal{C}}
\newcommand\cD{\mathcal{D}}
\newcommand\cE{\mathcal{E}}
\newcommand\cH{\mathcal{H}}
\newcommand\cO{\mathcal{O}}
\newcommand\barf{\overline{f}}
\newcommand\fn{\mathfrak{n}}
\newcommand\tu{\widetilde{u}}
\newcommand\tv{\widetilde{v}}
\newcommand\td{\widetilde{d}}
\newcommand\SL{\operatorname{SL}}
\newcommand\SO{\operatorname{SO}}
\newcommand\Sym{\operatorname{Sym}}
\newcommand\bX{\overline{X}}
\newcommand\SU{\operatorname{SU}}
\newcommand\bz{\overline{z}}
\newcommand\vol{\operatorname{vol}}
\newcommand\free{\operatorname{free}}
\newcommand\tor{\operatorname{tor}}
\newcommand\tX{\widetilde{X}}
\DeclareMathAlphabet{\mathpzc}{OT1}{pzc}{m}{it}
\begin{document}

\title[Torsion in the cohomology of arithmetic groups]{On the growth
of torsion in the cohomology of some arithmetic groups of $\bbQ$-rank one}

\author{Werner M\"uller}
\address{Universit\"at Bonn, Mathematisches Institut, Endnicher Allee 60, D-53115 Bonn, Germany}
\email{mueller@math.uni-bonn.de}
\author{Fr\'ed\'eric Rochon}
\address{D\'epartement de Math\'ematiques, UQ\`AM}
\email{rochon.frederic@uqam.ca }

\maketitle

\begin{abstract}
Given a number field $\bbK$ with ring of integers $\cO_{\bbK}$, one can associate to any torsion free subgroup of $\SL(2,\cO_{\bbK})$ of finite index a complete Riemannian manifold of finite volume with fibered cusp ends.  For natural choices of flat vector bundles on such a manifold, we show that analytic torsion is identified with the Reidemeister torsion of the Borel-Serre compactification.  This is used to obtain exponential growth of torsion in the cohomology for sequences of congruence subgroups.

\end{abstract}

\tableofcontents

\section{Introduction}
Let $G$ be a  connected semi-simple
algebraic group over $\bbQ$ \cite{Milne2018}, \cite{Borel-Tits} and let
$G_\infty:=G(\bbR)$ be the group of real points of $G$. Let
$\Gamma\subset G(\bbQ)$ be an arithmetic subgroup. Then $\Gamma$ is a lattice
in $G_\infty$. Let $\rho\colon G\to \GL(V)$ be a rational
finite-dimensional complex representation. Suppose that there is a
$\Gamma$-invariant lattice $L\subset V$ with $L\otimes_\bbZ\bbC=V$. The
cohomology groups $H^j(\Gamma;L)$ are finitely generated abelian groups. Let
$H^j(\Gamma;L)_{\free}$ and $H^j(\Gamma;L)_{\tor}$ be the free and the torsion
subgroups, respectively. We have
$H^j(\Gamma;L)_{\free}\otimes_\bbZ\bbC\cong H^j(\Gamma;V)$. If the underlying
locally symmetric space has an algebro-geometric structure, the Langlands
conjectures predict that there are deep connections between the cohomology
groups $H^j(\Gamma;V)$ and the theory of automorphic forms and number theory. 
Recent results show that torsion classes
are also connected to number theory. Ash \cite{As} conjectured that for a
congruence subgroup $\Gamma\subset\SL(n,\bbZ)$, any Hecke eigenclass
$\xi\in H^\ast(\Gamma;\bbF_p)$ is attached to a continuous semisimple
Galois representation $\rho\colon \Gal(\bar\bbQ/\bbQ)\to \GL(n,\bbF_p)$ such
that Frobenius and Hecke eigenvalues match up. A more general version of this
conjecture has been proved by Scholze \cite{Sch}.
However, much less is known about the
structure of the torsion group, for example, its size. 
If $G$ is $\bbQ$-anisotropic, i.e., $\Gamma\backslash G_\infty$ is
compact, Bergeron and Venkatesh \cite{BV} obtained first results
concerning the growth of torsion, if $\Gamma$ varies in a
tower of lattices, and  they also formulated a  conjecture predicting the
growth of torsion in general.

To state the results and the conjecture we need to introduce some notation.
Let $K\subset G_\infty$ be a maximal compact subgroup. Let $\widetilde X
=G_\infty/K$ be the associated global Riemannian symmetric space. Let
${\mathfrak g}$ and ${\mathfrak k}$ be the Lie algebras of $G_\infty$ and
$K$, respectively. The fundamental rank $\delta(G)$ of $G$ is defined by
$\delta(G):=\rank({\mathfrak g}_\bbC)-\rank({\mathfrak k}_\bbC)$. As explained
by Bergeron and Venkatesh \cite{BV}, when $\delta(G)=1$, one expects for
arithmetic reasons that $H^\ast(\Gamma;L)$ should have a lot of torsion and
a small free part. This conjecture is supported by the following result proved
in \cite{BV} for anisotropic $G$. Assume that $\cdots\subset \Gamma_j\subset
\Gamma_{j-1}\subset\cdots\subset\Gamma$ is a decreasing sequence of congruence
subgroups such that $\cap_j\Gamma_j=\{1\}$. $L$ is called {\it strongly acyclic}
for the family $\{\Gamma_j\}$ if the Laplacians on $V$-valued $i$-forms on
$\Gamma_k\backslash\widetilde X$ are uniformly bounded away from 0 for all degrees
$i$ and all $\Gamma_k$. In this case, $H^\ast(\Gamma_k\setminus\widetilde{X};L)$ is a pure torsion group.
If $\delta(G)=1$, then by \cite[Theorem 1.4]{BV} one has
\begin{equation}\label{inequal1}
\liminf_{j\to\infty}\sum_q\frac{\log|H^q(\Gamma_j;L)_{\tor}|}{[\Gamma\colon\Gamma_j]}\ge c_{G,L}\vol(\Gamma\backslash\widetilde X)>0,
\end{equation}
where the sum is over the integers $q$ such that
$q+\frac{\dim (\widetilde X)+1}{2}$ is odd and $c_{G,L}>0$ is a constant that
depends only on $G$ and $L$. To establish the lower bound, Bergeron and
Venkatesh prove the following result
\begin{equation}\label{equl1}
\lim_{j\to\infty}\sum_q(-1)^{q+\frac{\dim (\widetilde X)+1}{2}}\frac{\log|H^q(\Gamma_j;L)_{\tor}|}
{[\Gamma\colon\Gamma_j]}=c_{G,L}\vol(\Gamma\backslash\widetilde X).
\end{equation}
The proof of \eqref{equl1} uses the equality of analytic torsion and
Reidemeister torsion. It follows from \eqref{inequal1}  that for some $q$,
$|H^q(\Gamma_j;L)_{\tor}|$ grows exponentially as $j\to\infty$.
Based on \eqref{equl1}, 
Bergeron and Venkatesh made a conjecture with a precise prediction of
the growth of torsion \cite[Conjecture 1.3]{BV} without any assumption on $L$.
The conjecture states that for each $q$ 
\begin{equation}\label{conj1}
\lim_{j\to\infty}\frac{\log|H^q(\Gamma_j;L)_{\tor}|}{[\Gamma\colon\Gamma_j]}
\end{equation}
exists and equals zero unless $\delta(G)=1$ and
$q=\frac{\dim(\widetilde X)+1}{2}$. In this case it equals
$c_{G,L}\vol(\Gamma\backslash\widetilde X)$ with $c_{G,L}>0$. All this is under
the assumption that the $\bbQ$-rank of $G$ is 0.

Since many important arithmetic groups are not co-compact, it is desirable to
extend these results to groups $G$ with $\bbQ$-rank $>0$.
In \cite{AGMY}
the authors made comprehensive computations of torsion subgroups in
$H^j(\Gamma,\bbZ)$, where $\Gamma\subset G(\bbQ)$ is an arithmetic subgroup
for $G=\GL_n/\bbQ$, $n=3,4,5$, or $G=\GL_2$ over specific number fields for
which $\delta(G)=1$ or $2$. They use their computations to extend the
conjecture \eqref{conj1} of Bergeron und Venkatesh by removing the restriction
to the cocompact case and allowing any growth of level, not just in a tower.
These are Conjectures 7.1 and 7.2 in \cite{AGMY}. In particular, Conjecture 7.2
predicts that for a family $\{\Gamma_j\}_{j\in\bbN}$ of congruence subgroups in a
fixed arithmetic groups $\Gamma$ with $\level(\Gamma_j)\to\infty$,
\begin{equation}
\liminf_{j\to \infty}\frac{\log|H^q(\Gamma_j;L)_{\tor}|}{[\Gamma\colon\Gamma_j]}
\end{equation}
exists. If $G$ has $\bbQ$-rank $>0$, then the lim-inf equals zero unless
$\delta(G)=1$ and $q$ is the top degree of the cuspidal range.

The first step
beyond $\bbQ$-rank 0 is the case of hyperbolic manifolds of finite volume which
has been treated in \cite{Pfaff2014b}, \cite{MR2}. This is the $\bbR$-rank 1
case. The method of \cite{BV} is based on the equality of analytic torsion and
Reidemeister torsion \cite{Cheeger1979}, \cite{Muller1978}, \cite{Muller1993}.
This equality is not available in the non-compact case.
However, for hyperbolic manifolds of finite volume there is a formula with an
explicit defect term \cite{MR1}, whose asymptotic behavior can be controlled for
a family of congruence subgroups $\{\Gamma_j\}_{j\in\bbN}$ of a fixed arithmetic
group $\Gamma$. There is another obstacle if one wants to apply the result
to deduce a formula similar to \eqref{equl1}. Even if $L$
strongly acyclic, the cohomology $H^\ast(\Gamma;L\otimes\bbC)$ does not
vanish. Only the interior cohomology vanishes. In general, there is cohomology
coming from the boundary of the Borel-Serre compactification. This is the
Eisenstein cohomology which gives rise to a non-trivial regulator in the
expression of the Reidemeister torsion in terms of the order of the torsion
subgroup in the cohomology $H^\ast(\Gamma;L)$ \cite[$\S$ 2]{BV}. We were able
to cope with these problems and established a lower bound similar to
\eqref{inequal1} \cite{MR2}.

In this paper we consider $\bbQ$-rank 1 cases with $\bbR$-rank $>1$.
The corresponding locally symmetric space is a manifold with fibered cusps.
The semi-simple group $G$ is defined as follows.
Let $\bbK$ be a number field of degree $d_{\bbK}$ over $\bbQ$.
Let $\cO_{\bbK}$ denote the ring of algebraic integers of $\bbK$. We consider
$\SL(2)$ as an algebraic group over $\bbK$. Let $G_0=\SL(2)/\bbK$ and let
\begin{equation}\label{restr-scal}
G=\Restr_{\bbK/\bbQ}(G_0)
\end{equation}
be the algebraic group, which is obtained from $G_0$ by restriction
of scalars \cite{We}. Then $G$ is a semi-simple
algebraic group over $\bbQ$. Moreover, we have
\[
G_\infty=G(\bbR)=\prod_{v|\infty} \SL(2,\bbK_v),\quad G(\bbQ)=\SL(2,\bbK).
\]
Let $\sigma_1,\ldots, \sigma_{r_1}$ be the embeddings of $\bbK$ in $\bbR$ and let $\tau_1,\overline{\tau}_1,\ldots,\tau_{r_2},\overline{\tau}_{r_2}$ denote the remaining embeddings of $\bbK$ in $\bbC$, so that $d_{\bbK}=r_1+2r_2$. Then
$$
G_{\infty}= \SL(2,\bbK\otimes_{\bbQ}\bbR)= \SL(2,\bbR)^{r_1}\times \SL(2,\bbC)^{r_2}
$$
and $K_\infty=\SO(2)^{r_1}\times \SU(2)^{r_2}$ is a maximal compact subgroup.  The
corresponding symmetric space equals
$$
   \tX:= G_{\infty}/K_\infty= (\bbH^2)^{r_1}\times (\bbH^3)^{r_2}.
$$
Let $\Gamma\subset\SL(2,\cO_{\bbK})$ be a torsion free subgroup of finite index.  Then $\Gamma$ is a discrete subgroup of $G_{\infty}$ via the embedding $\iota: \Gamma\to \SL(2,\bbR)^{r_1}\times \SL(2,\bbC)^{r_2}$ defined by
\begin{multline}
     \left( \begin{array}{cc} \alpha & \beta \\ \gamma & \delta \end{array} \right)\mapsto \\
     \left(\left( \begin{array}{cc} \sigma_1(\alpha) & \sigma_1(\beta) \\ \sigma_1(\gamma) & \sigma_1(\delta) \end{array} \right),\ldots, \left( \begin{array}{cc} \sigma_{r_1}(\alpha) & \sigma_{r_1}(\beta) \\ \sigma_{r_1}(\gamma) & \sigma_{r_1}(\delta) \end{array} \right), \left( \begin{array}{cc} \tau_1(\alpha) & \tau_1(\beta) \\ \tau_1(\gamma) & \tau_1(\delta) \end{array} \right), \ldots, \left( \begin{array}{cc} \tau_{r_2}(\alpha) & \tau_{r_2}(\beta) \\ \tau_{r_2}(\gamma) & \tau_{r_2}(\delta) \end{array} \right)\right)
\label{emb.1}\end{multline}
and the quotient
$$
   X= \Gamma\setminus \widetilde{X}
$$
is a manifold.  As described in \cite{Borel1974}, it comes with a natural
metric.  First, the invariant metric one should consider on
$\tX=(\bbH^2)^{r_1}\times (\bbH^3)^{r_2}$ is 
\begin{equation}
    \widetilde{g}=  \sum_{i=1}^{r_1}\frac{dx_i^2+ dy_i^2}{y_i^2}+ 2 \sum_{j=1}^{r_2} \frac{|dz_j|^2+ dt_j^2}{t_j^2}
\label{mc.3}\end{equation}
using the upper half-plane and the upper half-space models for $\bbH^2$ and $\bbH^3$.
Since $\Gamma$ acts by isometries, $\widetilde{g}$ descends to a metric $g$ on $X$.  The manifold $X$ has the homotopy type of a compact manifold with boundary with each boundary component corresponding to a fibered cusp end. 
 If $\bbP^1(\bbK)$ is the projective line of the number field $\bbK$, then the fibered cusp ends are in bijection with $\Gamma\setminus \bbP^1(\bbK)$.  Let in fact $\mathfrak{P}_{\Gamma}\subset \bbP^1(\bbK)$ be a set of representatives for the classes in $\Gamma\setminus \bbP^1(\bbK)$, so that $\mathfrak{P}_{\Gamma}$ naturally corresponds to the set of fibered cusp ends of $X$.  Without loss of generality, we will assume that $[1:0]\in \mathfrak{P}_{\Gamma}$.    As described in \cite[no.28, p.69]{Shimizu}, when $\Gamma= \SL(2,\cO_{\bbK})$, the fibered cusp ends are also naturally identified with the ideal classes of $\bbK$.
 
 Thus, $X$ has a natural compactification as a manifold with boundary $\bX$ with boundary components $Y_{\eta}$ labeled by $\eta\in \mathfrak{P}_{\Gamma}$.  Each boundary component comes with a natural fiber bundle
 \begin{equation}
    \phi_{\eta}: Y_{\eta}\to S_{\eta}
 \label{int.1}\end{equation}
with base $S_{\eta}$ and fibers $\phi_{\eta}^{-1}(y)$ diffeomorphic to tori,
$$
        S_{\eta}\cong \bbT^{r_1+r_2-1}, \quad \phi_{\eta}^{-1}(y)\cong \bbT^{r_1+2r_2} \quad \forall y\in S_{\eta}.
$$
In the fibered cusp end associated to $\eta$, the metric $g$, up to scaling, takes the form
\begin{equation}
    g= \frac{dr^2}{r^2}+ \phi_{\eta}^*g_{S_{\eta}}+ r^{-2}\kappa, \quad r\in (R_{\eta},\infty),
\label{int.2}\end{equation}
for some $R_{\eta}>0$, where $g_{S_{\eta}}$ is a flat metric on $S_{\eta}$ and $\kappa$ is a $2$-tensor inducing a flat metric on each fiber of \eqref{int.1} in such a way that $\phi^*_{\eta}g_{S_{\eta}}+\kappa$ is a flat metric on $Y_{\eta}$ making \eqref{int.1} a Riemannian submersion with respect to the metric $g_{S_{\eta}}$ on the base.  Moreover, the natural connection of \eqref{int.1} induced by the metric $\phi_{\eta}^*g_{S_{\eta}}+\kappa$ has trivial curvature in the sense of \cite[$\S$ 10.1]{BGV}.  However, the second fundamental form of this Riemannian submersion does not vanish in general, which by \cite[Lemma~10.3]{BGV} corresponds to the fact that the family of fiberwise metrics is not necessarily parallel with respect to the natural connection.    From \eqref{int.2}, it can also be inferred that $g$ is a complete metric of finite volume.

There are natural flat vector bundles associated to the Riemannian manifold $(X,g)$.  To describe them, let $V$ be the standard representation of $\SL(2,\bbR)$ and let $W$ be the standard representation of $\SL(2,\bbC)$.  Denote also by $\bW$ the complex conjugate of $W$, that is, the dual representation.  For $q\in \bbN$, let $V_q$ be the $q$th symmetric power of $V$, $W_q$ be the $q$th symmetric power of $W$ and $\bW_q$ be the $q$th symmetric power of $\bW$.  Then for $m=(m_1,\ldots,m_{r_1})\in\bbN^{r_1}_0$ and $n=(n_1, \bn_1,\ldots,n_{r_2},\bn_{r_2})\in\bbN^{2r_2}_0$, the tensor product representation $(V_{m_1}\otimes\cdots \otimes V_{m_{r_1}})\otimes (W_{n_1}\otimes \bW_{\bn_1}\otimes\cdots\otimes W_{n_{r_2}}\otimes \bW_{\bn_{r_2}})$ with map
\begin{equation}\label{stand-repr}
\begin{split}
\varrho_{m,n}: \SL(2,\bbR)^{r_1}\times &\SL(2,\bbC)^{r_2}\to \\
&\GL((V_{m_1}\otimes\cdots \otimes V_{m_{r_1}})\otimes (W_{n_1}\otimes\bW_{\bn_1}\otimes\cdots\otimes W_{n_{r_2}}\otimes \bW_{\bn_{r_2}}))
\end{split}
\end{equation}
is an irreducible representation of $G_{\infty}$.  There is a natural flat vector bundle $E_{m,n}\to X$, associated to $\varrho_{m,n}|_\Gamma$
$$
     E_{m,n}= \Gamma\setminus (\tX\times (V_{m_1}\otimes\cdots \otimes V_{m_{r_1}})\otimes (W_{n_1}\otimes \bW_{\bn_1}\otimes\cdots\otimes W_{n_{r_2}}\otimes \overline{W}_{\bn_{r_2}})).
$$
By \cite[Sect. 3]{MM1963} this bundle can be equipped with a canonical
bundle metric $h$, which is defined by an admissible inner product in the
representation space \cite[Lemma 3.1]{MM1963}.
The flat connection is not unitary with respect to this metric, but it is at
least unimodular.

One central goal of the present paper is to study the analytic torsion $T(X,E_{m,n},g,h)$ of $(X,E_{m,n},g,h)$ as defined in \cite{ARS1} and to relate it with the Reidemeister torsion of $(\bX, E_{m,n})$.  Recall that on closed manifolds, such a relation was conjectured by Ray and Singer \cite{Ray-Singer} and subsequently established independently by Cheeger \cite{Cheeger1979} and the first author \cite{Muller1978} when the flat connection is unitary.  This was extended to unimodular flat connections by the first author in \cite{Muller1993}, while the general case was treated by Bismut and Zhang \cite{Bismut-Zhang}.

For non-compact manifolds, some relation between analytic torsion and Reidemeister torsion has been obtained on manifolds with cylindrical ends by Hassell \cite{Hassell} using the surgery pseudodifferential calculus of Mazzeo and Melrose \cite{mame1,hmm}.  Developping instead a surgery pseudodifferential calculus adapted to fibered cusp ends, a corresponding result was obtained in \cite{ARS1} when the Riemannian manifold has fibered cusp ends with a sharper result in \cite{ARS2} when there are only cusp ends, that is, fibered cusp ends whose bases are points. Following \cite{Hassell}, the strategy of \cite{ARS1,ARS2} consists in considering the double 
$$
   M= \bX \bigcup_{\pa\bX} \bX
$$
of $\bX$ obtained by gluing two copies of $\bX$ along their boundaries and to consider a family of smooth metrics $g_{\epsilon}$ on $M$ degenerating to the fibered cusp metric of interest on each copy of $X$ in $M$ as $\epsilon\searrow 0$.  Through a uniform construction of the resolvent and of the heat kernel as $\epsilon\searrow 0$, it was then possible to describe the asymptotic behavior of analytic torsion on $M$ as $\epsilon\searrow 0$ and identify one of the limiting terms as analytic torsion on each copy of $X$ in $M$.  On the other hand, through the formula of Milnor \cite{Milnor1966} for Reidemeister torsions appearing in a short exact sequence of complexes, one can relate the Reidemeister torsion of $M$ with the one of $X$ via a suitable Mayer-Vietoris long exact sequence in cohomology.  Combining with the result of \cite{Muller1993} on $M$, one can then obtain a relation between analytic torsion and Reidemeister torsion.  

In general, the limiting behavior of analytic torsion as $\epsilon\searrow 0$ involves many terms,  some of which possibly not very explicit.  However, assuming that the base of each fibered cusp ends is even dimensional with the vector bundle being acyclic in each fiber, many of these terms vanish, yielding the simple formula of \cite[Theorem~1.3]{ARS1}.  In the cusp case, it was possible in \cite{ARS1} to replace the acyclicity condition by a much weaker Witt condition.  Moreover, in this latter case, instead of  the Reidemeister torsion of $\bX$, what appears in the formula is the intersection $R$-torsion of Dar \cite{Dar} associated to the stratified space obtained from $\bX$ by collapsing each of its boundary component onto a point.   

In both \cite{ARS1} and \cite{ARS2}, one important restriction is that the bundle metric $h$ of the flat vector bundle is required to be smooth on the compactification $\bX$, excluding in particular the natural bundle metric of Matsushima and Murakami \cite{MM1963} on locally homogeneous spaces.  This is the starting point of \cite{MR1}, where it was shown that the strategy of \cite{ARS1,ARS2}, suitably adapted, but still using the same analytical tools, works to obtain a relation between analytic torsion and Reidemeister torsion on finite volume hyperbolic manifolds when the flat vector bundle comes from representation theory and is equipped with the bundle metric of \cite{MM1963}.  

The present paper expand further in this direction by obtaining the following result for the Riemannian manifold with fibered cusp ends $(X,g)$ described above.
\begin{theorem}
Let $\bbK$ be a number field such that $r_2$ is odd (\ie $\dim X$ is odd) and $r_1+r_2>2$.  If $r_1=0$ suppose also that $\bn_1=\cdots=\bn_{r_2}=0$ and $n\ne0$.  In this case, 
$$
  T(X,E_{m,n},g,h)= \tau(\bX,E_{m,n},\mu_X),
$$
where $\tau(\bX,E_{m,n},\mu_X)$ is the Reidemeister torsion of $(\bX,E_{m,n})$ associated to $\mu_X$, an explicit choice of basis of $H^*(\bX;E_{m,n})$ described in \eqref{se.8} below.
\label{int.3}\end{theorem}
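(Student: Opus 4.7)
The plan is to adapt the surgery strategy of \cite{Hassell,ARS1} refined in \cite{MR1} to the present fibered cusp setting. Form the closed manifold $M = \bX \cup_{\pa \bX} \bX$ and choose a family of smooth Riemannian metrics $g_\epsilon$ on $M$ degenerating as $\epsilon \searrow 0$ to the fibered cusp metric $g$ on each copy of $X$. The flat bundle $E_{m,n}$ extends to a flat bundle $\hE$ on $M$; since the Matsushima--Murakami bundle metric $h$ is not smooth on $\bX$, I equip $\hE$ with a family of smooth bundle metrics $h_\epsilon$ built by smoothly tapering $h$ to a collar-flat choice in a shrinking neighborhood of $\pa \bX$, in the spirit of \cite{MR1}. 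Because $\hE$ has unimodular monodromy, the theorem of \cite{Muller1993} gives
\[
T(M, \hE, g_\epsilon, h_\epsilon) = \tau(M, \hE)
\]
for each $\epsilon > 0$.

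\textbf{Step 2: Analytic side.} Using the fibered cusp surgery pseudodifferential calculus of \cite{ARS1}, obtain uniform asymptotic expansions of the heat kernel and resolvent on $(M, g_\epsilon, \hE, h_\epsilon)$ as $\epsilon \searrow 0$. This yields an asymptotic formula for $\log T(M, \hE, g_\epsilon, h_\epsilon)$ whose leading term is $2 \log T(X, E_{m,n}, g, h)$ and whose corrections consist of (i) fibered analytic torsion contributions along the boundary fibrations $\phi_\eta : Y_\eta \to S_\eta$, and (ii) explicit defect terms produced by the non-smoothness of $h$ at $\pa \bX$. The hypotheses of the theorem are tailored to tame these corrections: $r_2$ odd makes $\dim X$ odd, so local metric anomalies cancel by parity, while the conditions $r_1 + r_2 > 2$, respectively $r_1 = 0$ together with $\bn_j = 0$ for all $j$ and $n \neq 0$, ensure that the restriction of $\varrho_{m,n}$ to the lattice cutting out each torus fiber $\phi_\eta^{-1}(y)$ has no invariants, giving fiberwise acyclicity of $E_{m,n}$ and annihilating (i). The residual defect (ii) is then computed as in \cite{MR1}, suitably adapted to positive-dimensional $S_\eta$.

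\textbf{Step 3: Combinatorial side.} On the Reidemeister torsion side, apply Milnor's multiplicativity formula to the Mayer--Vietoris short exact sequence
\[
0 \to C^*(M; \hE) \to C^*(\bX; E_{m,n}) \oplus C^*(\bX; E_{m,n}) \to C^*(\pa \bX; E_{m,n}) \to 0
\]
with bases coming from the associated long exact sequence in cohomology and compatible with $\mu_X$. The fiberwise acyclicity from Step 2 forces $H^*(\pa \bX; E_{m,n})$ to reduce to the cohomology of $S_\eta$ with coefficients in an acyclic fiber complex, whose torsion is computable; a direct manipulation then gives
\[
\tau(M, \hE) = \tau(\bX, E_{m,n}, \mu_X)^2
\]
up to a term matching the anomaly (ii). Combining with Step 1 and taking square roots yields the claimed identity.

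The hardest step is Step 2: I must run the surgery analysis in the fibered cusp calculus \emph{simultaneously} with the handling of the non-smoothness of $h$ at $\pa \bX$. In \cite{MR1} this was done for hyperbolic manifolds, where $S_\eta$ is a point and the only fibered contribution is a single torus; here $S_\eta$ is a positive-dimensional flat torus, which demands an extra layer of fibered parametrix construction and a more delicate cancellation between the surgery anomaly and the Matsushima--Murakami anomaly. The acyclicity hypotheses on $(m, n)$ are precisely what makes this cancellation succeed and what allows the limiting formula to collapse to the clean statement of the theorem.
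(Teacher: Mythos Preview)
Your overall architecture---double $\bX$ to a closed manifold $M$, run the surgery analysis of \cite{ARS1,MR1} on a degenerating family $(g_\epsilon,h_\epsilon)$, apply \cite{Muller1993} on $M$, and use Mayer--Vietoris for the Reidemeister side---is exactly the paper's strategy. But your explanation of \emph{why} the correction terms vanish is wrong, and this is not a cosmetic issue: it is the heart of the argument.

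You claim the hypotheses force the restriction of $E_{m,n}$ to each torus fiber of $\phi_\eta$ to be acyclic (no invariants under the unipotent lattice). This is false. The unipotent radical acts by upper-triangular matrices, which always fix the highest-weight vector; the fiberwise cohomology $\ker D_{v,\eta}$ is explicitly computed in the paper (Lemma~\ref{dec.5c}) and is never zero. Consequently $H^*(\pa\bX;E_{m,n})$ can be non-trivial (Lemma~\ref{se.4}), and the model operator $D_{b,\eta}$ can have non-trivial $L^2$-kernel (Proposition~\ref{dec.8d}). The theorem still holds in those cases.

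The actual mechanism on the analytic side is quite different. Using that the metric is \emph{exactly} the model \eqref{model.1} near $\rho=0$ and that the fiber bundle $\phi_\eta$ is flat with trivial second fundamental form, the paper shows that the pointwise trace $\tr\bigl((-1)^N N e^{-t\hD_{\fc,\epsilon}^2}\bigr)$ vanishes identically near the boundary. This comes down to the elementary identity $\sum_{q}(-1)^{p+q}(p+q)\binom{r_1+r_2-1}{q}=0$ when $r_1+r_2-1>1$; this is where the hypothesis $r_1+r_2>2$ enters, not through any acyclicity. This kills both the front-face contribution and the $b$-torsion $\log T(D_{b,\eta}^2)$ in one stroke (Theorem~\ref{cdat.13}). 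The extra hypotheses when $r_1=0$ ($\bn_j=0$, $n\neq 0$) are there to guarantee that $D_{b,\eta}$ is Fredholm (Proposition~\ref{dec.8d}), not to produce acyclicity.

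There is also a third piece you have folded incorrectly into an ``anomaly matching'': the basis-change term $\prod_q[\mu_M^q|\omega^q]^{(-1)^q}$ in \eqref{se.14} must be controlled separately. Its finite part at $\epsilon=0$ vanishes (Lemma~\ref{se.20c}) because it is a multiple of $\chi(S_\eta)$, which is zero for $\dim S_\eta=r_1+r_2-1>0$. On the Reidemeister side, $\tau(M)=\tau(\bX,\mu_X)^2$ comes out cleanly (Theorem~\ref{se.11}) using Harder's identification of the image of restriction with $H^*_-(\pa\bX;E_{m,n})$ and the self-duality of $\mu_{\pa\bX}$, which gives $\tau(\pa\bX;E_{m,n},\mu_{\pa\bX})=1$; no residual anomaly is left to match.
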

\begin{remark}
When $r_1>0$, notice that our result applies to the trivial line bundle $E_{0,0}$.  
\end{remark}

Compared to \cite{ARS1}, notice that we no longer require that $\dim S_{\eta}$ be even, but only that $\dim S_{\eta}>1$.  This improvement is relying on the fact that the metric $g$ is exactly given by the model \eqref{int.2} in each fibered cusp end and that \eqref{int.1} has vanishing curvature as a Riemannian submersion.  The case where $r_1=0$ and $r_2=1$, so in particular with $\dim S_{\eta}=0$, is not covered by Theorem~\ref{int.3}, but there is a corresponding result in this case, namely \cite[Theorem~7.1]{MR1}, this time however with an explicit defect term depending on $E_{m,n}$.  On the other hand, if $r_1=r_2=1$, that is, when $\dim S_{\eta}=1$, there seems to be a defect term as well, but hard to determine or estimate with the current techniques. 

Our next goal is to apply this result to study the growth of torsion in
the cohomology of the arithmetic groups $\Gamma$ as described above. To this
end, let

\begin{equation}\label{repres1}
\varrho\colon G\to \GL(V)
\end{equation}
be a $\bbQ$-rational representation of $G$ on a finite dimensional 
$\bbQ$-vector space $V$.  Since $\varrho$ is a $\bbQ$-rational representation of $G$ on $V$,
we know by \cite[Chapter VII, Prop 5.1, p.400]{Milne2011} that there exists a lattice $\Lambda\subset V$, i.e., $V=\Lambda\otimes_\bbZ\bbQ$,
which is invariant under $\Gamma$.  Let $\varrho_\infty$
be the representation of $G_\infty$ on $V_\bbC=V\otimes_\bbQ\bbC$ obtained by restriction
of the representation of $G(\bbC)$ to $G_\infty$.  Let
$E\to\Gamma\bsl\widetilde X$ be the flat vector bundle defined by
$\varrho_\infty|_{\Gamma}$.   In analogy with the compact case, we call
$\Lambda$ a  $L^2$-acyclic $\Gamma$-module if $E$ has trivial $L^2$-cohomology, namely $H^\ast_{(2)}(\Gamma\bsl\widetilde X; E)=0$.
If in fact
$H^\ast(\Gamma\bsl\widetilde X; E)=0$, we say that $\Lambda$ is an acyclic $\Gamma$-module.

Now, if $\mathfrak{n}\subset \cO_{\bbK}$ is an ideal, we can take $\Gamma$ to be the principal congruence subgroup of $\SL(2,\cO_{\bbK})$ of level $\mathfrak{n}$ defined by
\begin{equation}
   \Gamma(\mathfrak{n}):= \left\{  \lrp{ \begin{array}{cc} a & b \\ c & d \end{array}  }\in \SL(2,\cO_{\bbK}) \; | \; a-1,d-1,b,c\in \mathfrak{n}  \right\}.
\label{eg.6}\end{equation}

This can be seen as a subgroup of $G_{\infty}$ via the embedding \eqref{emb.1}.
Let $\{\fn_i\}_{i\in\bbN}$ be a sequence of ideals in $\cO_{\bbK}$ satisfying
\begin{equation}\label{ideals}
\fn_i\subset\fn_1,\; i\in\bbN,\quad \text{and}\quad N(\fn_i)\to \infty
\;\text{as}\;i\to\infty,
\end{equation}
where $N(\fn_i):=[\cO_{\bbK}:\fn_i]$ is the absolute norm of $\fn_i$. 
The associated sequence of principal congruence subgroups
$\Gamma(\fn_i)\subset\SL(2,\cO_{\bbK})$, $i\in\bbN$, satisfies
\[
\Gamma(\fn_i)\subset\Gamma(\fn_1),\; i\in\bbN,\quad\text{and}\quad
[\Gamma(\fn_1)\colon\Gamma(\fn_i)]\to\infty\;\text{as}\;i\to\infty.
\]
Let $X_i:= \Gamma(\mathfrak{n}_i)\bsl \widetilde{X}$. 
Assuming that $\Gamma(\mathfrak{n}_1)$ is torsion free will ensure that $X_i$ is a smooth manifold for each $i$. Let
\[
L_i=\Gamma(\mathfrak{n}_i)\backslash(\widetilde{X}\times\Lambda)
\]
be the local system of free $\bbZ$-modules on $X_i$ associated to $\Lambda$.
Let $E_i\to X_i$ be the flat vector bundle, which is defined by
$\varrho_\infty|_{\Gamma(\mathfrak{n}_i)}$.

Combining Theorem~\ref{int.3} with \cite{Matz-Muller} allows us to conclude the following result about the size of $H^*(X_i;L_i)_{\tor}$; see Theorems~\ref{eg.9} and \ref{eg.24} below for further details.
\begin{theorem}\label{main-thm}
Let $\bbK$ be a number field with $r_2=1$ and $r_1>1$. Let $\varrho$ be a
$\bbQ$-rational representation of $G$ on $V$.  Let $\Lambda\subset V$
be an  arithmetic $\Gamma(\fn_1)$-module and let $L_i$ be
the local system over $X_i$, associated to $\Lambda$.  Suppose that $\varrho_{\infty}$ decomposes into a sum of irreducible representations $\tau_j$ such that $\tau_j\ncong \tau_j\circ \vartheta$, where $\vartheta$ is the standard Cartan involution of $G_{\infty}$ with respect to $K_{\infty}$.
If $\Lambda$ is an acyclic $\Gamma(\mathfrak{n}_i)$-module for each $i$, then for the sequence of principal congruence subgroups
$\{\Gamma(\fn_i)\}_{i\in\bbN}$ we have
$$
\liminf_{i\to \infty} \sum_{q+r_1 \;\; \operatorname{even}} \frac{\log |H^q(\Gamma(\mathfrak{n}_i);\Lambda)|}{[\Gamma(\mathfrak{n}_1):\Gamma(\mathfrak{n}_i)]}\ge 2(-1)^{r_1+1}t^{(2)}_{\widetilde{X}}(\varrho_{\infty})\vol(X_1)>0,
$$
where $t^{(2)}_{\widetilde{X}}(\varrho_{\infty})$ is the $L^2$-torsion \cite{Lott1992,Mathai1992} associated to $\widetilde{X}$ and $\varrho_{\infty}$.
If we drop the  assumption  that $\Lambda$ is acyclic, but assume that the natural isomorphism $V^*\cong V$ induces an isomorphism $\Lambda^*\cong \Lambda$, then 
$$
\liminf_{i\to \infty} \sum_{q+r_1 \;\; \operatorname{even}}
\frac{\log |H^q(\Gamma(\mathfrak{n}_i);\Lambda)_{\tor}|}{[\Gamma(\mathfrak{n}_1):\Gamma(\mathfrak{n}_i)]}\ge
(-1)^{r_1+1}t^{(2)}_{\widetilde{X}}(\varrho_{\infty})\vol(X_1)>0.
$$ 
\label{int.4}\end{theorem}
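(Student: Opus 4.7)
The plan, modeled on \cite{BV} and \cite{MR2}, is to convert the asymptotic of analytic torsion along the tower $\{\Gamma(\fn_i)\}$ into the desired lower bound on torsion in cohomology via Theorem \ref{int.3} together with a regulator identity. Since $r_2 = 1$ is odd and $r_1 + r_2 = r_1 + 1 > 2$, Theorem \ref{int.3} applies to each $X_i = \Gamma(\fn_i) \bsl \widetilde X$ together with each irreducible summand of $\varrho_\infty$, giving
\[
\log T(X_i, E_i, g_i, h_i) = \log |\tau(\bX_i, E_i, \mu_{X_i})| \quad \text{for all } i \in \bbN.
\]
Combining this with the asymptotic of analytic torsion along principal congruence subgroups established in \cite{Matz-Muller}, and using the hypothesis $\tau_j \ncong \tau_j \circ \vartheta$ to ensure that $t^{(2)}_{\widetilde X}(\varrho_\infty)$ is nonzero with the sign $(-1)^{r_1+1}$ dictated by the parities of the hyperbolic factors, would yield
\[
\lim_{i \to \infty} \frac{\log |\tau(\bX_i, E_i, \mu_{X_i})|}{[\Gamma(\fn_1) : \Gamma(\fn_i)]} = (-1)^{r_1+1} t^{(2)}_{\widetilde X}(\varrho_\infty) \vol(X_1) > 0.
\]

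The next step is to translate this Reidemeister-torsion asymptotic into arithmetic data. Picking an integer basis of the free part of $H^*(\bX_i; L_i)$ alongside $\mu_{X_i}$ of \eqref{se.8}, the classical identity reads
\[
\log |\tau(\bX_i, E_i, \mu_{X_i})| = \sum_q (-1)^{q+1} \log |H^q(\bX_i; L_i)_{\tor}| + \log R_i,
\]
where $R_i$ is the regulator comparing the two bases. Under acyclicity, $H^*(\bX_i; E_i) = 0$, no basis comparison is required and $R_i \equiv 1$. To convert the alternating sum into the one-sided sum over $q + r_1$ even, I would combine two inputs: first, the combined application of Poincar\'e--Lefschetz duality on $(\bX_i, \partial \bX_i)$ and the Universal Coefficient Theorem, which gives $|H^q(\bX_i; L_i)_{\tor}| = |H^{\dim X - q + 1}(\bX_i; L_i)_{\tor}|$ (directly in the second statement thanks to $L_i \cong L_i^*$, and in the acyclic case of the first statement via the long exact sequence of $(\bX_i, \partial\bX_i)$ modulo sub-exponentially growing boundary terms), which together with $\dim X = 2r_1 + 3$ odd shows that the two dual degrees contribute with the same sign and so add up in the alternating sum; second, the fact that $q + r_1$ even is precisely the parity for which the $L^2$-Laplacian on forms of $(\bbH^2)^{r_1} \times \bbH^3$ contributes nontrivially to $t^{(2)}_{\widetilde X}(\varrho_\infty)$. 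Regrouping accordingly produces the sign $(-1)^{r_1+1}$ and the factor $2$ in the acyclic case.

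The main obstacle is the uniform control of $R_i$ (and of the boundary-torsion terms mentioned above) in the non-acyclic second inequality. Because $\tau_j \ncong \tau_j \circ \vartheta$, the interior cohomology of $X_i$ with coefficients in $E_i$ vanishes, so the free part of $H^*(\bX_i; L_i)$ is carried entirely by Eisenstein cohomology on the boundary strata $\partial \bX_i = \bigsqcup_\eta Y_{\eta,i}$. Adapting the arithmetic estimates of \cite{BV} and \cite{MR2} to the present fibered-cusp setting, and exploiting the explicit product structure described in \eqref{int.1}--\eqref{int.2}, one should obtain $\log R_i = o([\Gamma(\fn_1) : \Gamma(\fn_i)])$ together with polynomial bounds on the boundary-torsion discrepancies. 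This arithmetic-analytic step is where I anticipate most of the technical work.
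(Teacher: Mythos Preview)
Your overall strategy---apply Theorem~\ref{int.3}, invoke the approximation of $L^2$-torsion from \cite{Matz-Muller}, and then extract the arithmetic content via Cheeger's formula and a regulator estimate---matches the paper's. Two points deserve correction.

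\textbf{Acyclic case.} You work much harder than necessary. In the paper the factor $2$ comes directly from Cheeger's identity $\tau(\bX_i,\cE_\varrho)^2=\prod_q |H^q(\bX_i;L_i)|^{(-1)^{q+1}}$; since every $\log|H^q|\ge 0$, one simply drops the terms with $q+r_1$ odd from the alternating sum and takes the $\liminf$. No Poincar\'e--Lefschetz duality, no control of boundary torsion, and no ``folding'' is required. Your proposed duality identity $|H^q(\bX_i;L_i)_{\tor}|=|H^{\dim X+1-q}(\bX_i;L_i)_{\tor}|$ is in any case not correct as stated: the Universal Coefficient Theorem together with Poincar\'e--Lefschetz duality identifies $H^q_{\tor}(\bX_i;L_i)$ with $H^{\dim X+1-q}_{\tor}(\bX_i,\partial\bX_i;L_i)$, i.e.\ absolute with \emph{relative} cohomology, not absolute with absolute.

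\textbf{Non-acyclic case.} Here your outline is on the right track but misses the mechanism that makes the regulator tractable. The paper does not estimate $R_i$ for the basis $\mu_{X_i}$ directly; instead it passes via Poincar\'e--Lefschetz duality to the relative torsion $\tau(\bX_i,\partial\bX_i,\cE_\varrho,\mu_{\bX_i,\partial\bX_i})$ and observes that the dual basis is simply $\partial(\mu_{i,+})$ coming from the boundary. This converts the regulator into covolumes of the lattices $H^{q}_{\free,+}(\partial X_i;L_\varrho)$, which are then controlled by two concrete ingredients: a two-sided covolume bound using the self-duality hypothesis $\Lambda^*\cong\Lambda$ (Proposition~\ref{eg.14}), and an elementary cusp-counting estimate showing that $\#\mathfrak P_{\Gamma(\fn_i)}$ and the local indices grow sub-linearly in $[\Gamma(\fn_1):\Gamma(\fn_i)]$ (Proposition~\ref{eg.26}). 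The loss of the factor $2$ relative to the acyclic case occurs precisely at the inequality step where one discards the ``wrong parity'' torsion groups $|H^{2r_1+4-q}_{\tor}(\bX_i,\partial\bX_i;L_i)|\ge 1$; it is not a consequence of folding.
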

\begin{remark}
As explained in Proposition~\ref{prop-repr1}, the approach of Bergeron and Venkatesh \cite[8.1]{BV} yields many examples of acyclic $\Gamma(\mathfrak{n}_i)$-modules to which Theorem~\ref{int.4} applies. 
\end{remark}
\begin{remark}
If $\Lambda$ is  $L^2$-acyclic but not self-dual, we can apply Theorem~\ref{main-thm} to $\Lambda\oplus\Lambda^*\subset V\oplus V^*$ to obtain exponential growth of torsion in cohomology.
\label{int.4b}\end{remark}

\begin{remark}
The condition $r_2=1$ is important in the theorem, since when $r_2>1$, the fundamental rank of $G_{\infty}$ is not equal to $1$, so
$t^{(2)}_{\widetilde{X}}(\varrho)=0$ by \cite[Proposition~5.2]{BV}.
\label{int.5}\end{remark}
\begin{remark}
The case $r_2=1$ with $r_1=0$ is not covered by this result, but in this case the exponential growth of torsion was obtained in \cite{Pfaff2014b}, see also  \cite[Corollary~1.6]{MR2}.
\label{int.6}\end{remark}

The paper is organized as follows.  In \S~\ref{gfce.0}, we give a detailed geometric description of the metric $g$ and the bundle metric $h$ in the fibered cusp ends.  This is used in \S~\ref{hdR.0} to study the Hodge-deRham operator and its asymptotic behavior under degeneration to fibered cusp metrics, so that in \S~\ref{cdat.0}, we can obtain the corresponding asymptotic behavior of analytic torsion.  This is combined in \S~\ref{sev.0} with the fine understanding of the asymptotic behavior of small eigenvalues of the Hodge-deRham operators under a cusp degeneration to prove Theorem~\ref{int.3}.  In \S~\ref{sect-acyclic}, we explain how to construct acyclic $\Gamma$-modules. This provides examples in \S~\ref{eg.0} to which we can apply our result to deduce Theorem~\ref{int.4} about the exponential growth of torsion in cohomology. 

\begin{acknowledgements}
The authors are grateful to an anonymous referee and to Hugo Chapdelaine for helpful comments.
The second author acknowledges support from NSERC.
\end{acknowledgements}

\section{Geometry of the fibered cusp ends} \label{gfce.0}

 Recall from the introduction that the fibered cusp ends are identified with $\Gamma\setminus\bbP^1(\bbK)$ and $\mathfrak{P}_{\Gamma}\subset \bbP^1(\bbK)$ is a fixed subset of representatives that includes $[1:0]$. Let us first describe the cusp end corresponding to $[1:0]\in \mathfrak{P}_{\Gamma}$.  Thus, let $B\subset \SL(2)$ be the standard Borel subgroup.  Set 
$$
   B_{\infty}= B(\bbK\otimes_{\bbR}\bbC)= B(\bbR)^{r_1}\times B(\bbC)^{r_2}    
$$
with 
$$
 B(\bbR)= \left\{ \left(\begin{array}{cc} \lambda & x \\ 0 & \lambda^{-1}  \end{array} \right) \; | \; \lambda \in \bbR^*, x\in\bbR\right\}
$$
and 
$$
B(\bbC)= \left\{ \left(\begin{array}{cc} \mu & z \\ 0 & \mu^{-1}  \end{array} \right) \; | \; \mu \in \bbC^*, z\in\bbC\right\}.
$$
Let $\nu: B_{\infty}\to (\bbR^+)^*$ be defined by 
\begin{equation}
  \nu(b)= \left(\prod_{i=1}^{r_1}|\lambda_i|\right)\left( \prod_{j=1}^{r_2}|\mu_j|^2\right)
\label{bdf.1}\end{equation}
for 
$$
b= \left(  \left(\begin{array}{cc} \lambda_1 & x_1 \\ 0 & \lambda_1^{-1}  \end{array} \right),\ldots, \left(\begin{array}{cc} \lambda_{r_1} & x_{r_1} \\ 0 & \lambda_{r_1}^{-1}  \end{array} \right), \left(\begin{array}{cc} \mu_1 & z_1 \\ 0 & \mu_1^{-1}  \end{array} \right),\ldots, \left(\begin{array}{cc} \mu_{r_2} & z_{r_2} \\ 0 & \mu_{r_2}^{-1}  \end{array} \right)\right)\in B_{\infty}.
$$
If 
$$
   B_{\infty}(1)= \left\{ b\in B_{\infty} \; | \; \nu(b)=1  \right\},
$$
then 
$$
    B_{\infty}(1)\cap K= B_{\infty}\cap K, \quad B_{\infty}(1)\cap \Gamma= B_{\infty}\cap \Gamma,
$$
and 
$$
     Y:= B_{\infty}\cap \Gamma \setminus B_{\infty}(1)/ B_{\infty}\cap K
$$
is the cross-section of the cusp end associated to $[1:0]\in\mathfrak{P}_{\Gamma}$, where $K=\SO(2)^{r_1}\times \SU(2)^{r_2}$ is the standard maximal compact subgroup of $G_{\infty}$.  In fact, by definition,
$$
  B_{\infty}\cap \Gamma\subset B_{\infty}\cap \SL(2,\cO_{\bbK})= \left\{  \iota\left(  \left(\begin{array}{cc} \lambda & x_1 \\ 0 & \lambda^{-1}  \end{array} \right)\right) \; | \; \lambda, \lambda^{-1}, x_1\in \cO_{\bbK} \right\},
$$
so $\lambda\in \cO^*_{\bbK}$.  If $N_{\bbK/\bbQ}: \bbK\to \bbQ$ is the norm defined by
$$
  N_{\bbK/\bbQ}(k)= \left(\prod_{i=1}^{r_1} \sigma_i(k)\right) \left(\prod_{j=1}^{r_2} |\tau_j(k)|^2\right), \quad k\in \bbK,
$$
then since $\lambda$ is a unit, one has that
$$
     N_{\bbK/\bbQ}(\lambda)=\pm 1,
$$
so that 
$$
    \nu\circ\iota \left(  \left(\begin{array}{cc} \lambda & x_1 \\ 0 & \lambda^{-1}  \end{array} \right)\right)= |N_{\bbK/\bbQ}(\lambda)|=1,
$$
confirming that $B_\infty\cap \Gamma= B_{\infty}(1)\cap \Gamma$.

The other cusp ends admit a similar description.  First, if $\Gamma=\SL(2,\cO_{\bbK})$, the cusp ends are identified with the ideal classes $c_1,\ldots, c_h$ of $\bbK$.  If $\mathfrak{a}$ is a representative in the ideal class $c_{\nu}$, pick $a,b\in \cO_{\bbK}$ such that $\mathfrak{a}$ is the ideal in $\cO_{\bbK}$ generated by $a$ and $b$.  Without loss of generality, we can assume that $a\ne 0$.  Then the cusp end associated to $c_{\nu}$ corresponds to the point 
$$
    \eta:= [a:b] \in \bbP^1(\bbK).
$$
Since the element
$$
   \lrp{\begin{matrix} a & 0 \\ b & a^{-1}  \end{matrix} } \in \SL(2,\bbK)
$$
sends $[1:0]$ onto $[a:b]$, the parabolic subgroup $P_{\eta}$ associated to $\eta$ is given by 
$$
    P_{\eta}=  \lrp{\begin{matrix} a & 0 \\ b & a^{-1}  \end{matrix} }^{-1} P_0  \lrp{\begin{matrix} a & 0 \\ b & a^{-1}  \end{matrix} },
$$
where 
$$
   P_0= B(\bbK)= \left\{  \lrp{\begin{matrix} \lambda & z \\ 0 & \lambda^{-1}  \end{matrix} } \; | \; \lambda\in \bbK^*, \; z\in \bbK \right\}
$$
is the parabolic subgroup of $[1:0]\in \bbP^{1}(\bbK)$.  In particular, a computation shows that 
$$
 \lrp{\begin{matrix} a & 0 \\ b & a^{-1}  \end{matrix} } \Gamma\cap P_{\eta} \lrp{\begin{matrix} a^{-1} & 0 \\ -b & a  \end{matrix} } = \left\{   \lrp{\begin{matrix} \lambda& w\\ 0 & \lambda^{-1}  \end{matrix} } \; | \; \lambda\in \cO_{\bbK}^*, \; w\in \mathfrak{a}^{-2}  \right\},
$$
where
$$
   \mathfrak{a}^{-2}= \left\{ w\in \bbK \; | \; uw\in \cO_{\bbK} \; \forall u \in \mathfrak{a}^2 \right\}.
$$
Thus, the cross-section of the cusp end associated to $\eta\in \bbP^1(\bbK)$ is
\begin{equation}
  Y_{\eta}:= \Gamma_{\mathfrak{a}}\setminus B_{\infty}(1)/B_{\infty}\cap K
\label{mc.1}\end{equation}
with
$$
    \Gamma_{\mathfrak{a}}:= \left\{ \iota\lrp{\lrp{ \lrp{\begin{matrix} \lambda & w \\ 0 & \lambda^{-1}  \end{matrix} }}}  \; | \; \lambda\in \cO_{\bbK}^*, \; w\in \mathfrak{a}^{-2} \right\} \subset B_{\infty}(1).  
$$
More generally, if $\Gamma$ is a subgroup of $\SL(2,\cO_{\bbK})$, the cusp ends are of the form 
\begin{equation}
Y_{\eta}= \Gamma_{\eta} \setminus B_{\infty}(1) / B_{\infty}\cap K
\label{mc.2}\end{equation}
with $\Gamma_{\eta}$ a finite index subgroup of $\Gamma_{\mathfrak{a}}$ for some ideal $\mathfrak{a}$ representing an ideal class of $\bbK$.

Denote the unipotent radical of $B$ by $N$.  Then $T=B/N$ is a split torus of dimension $r_1+r_2-1$.  Set 
$$
   T_{\infty} := T(\bbK\otimes_{\bbQ}\bbR)= T(\bbR)^{r_1}\times T(\bbC)^{r_2} 
$$
with
$$
  T(\bbR):= \left\{   \left(\begin{array}{cc} \lambda & 0 \\ 0 & \lambda^{-1}  \end{array} \right) \; | \; \lambda\in\bbR^* \right\} \quad\mbox{and} \quad T(\bbC):=\left\{ \left(\begin{array}{cc} \mu & 0 \\ 0 & \mu^{-1}  \end{array} \right) \; | \;  \mu\in\bbC^* \right\}.
$$
Let $K_T$ (respectively $\Gamma_{\eta,T}$) be the image of $K\cap B_{\infty}$ (respectively $\Gamma_{\eta}$) under the projection $p_{\infty}: B_{\infty}\to T_{\infty}$.  Then  
\begin{equation}
   K_T= (T(\bbR)\cap \SO(2))^{r_1}\times (T(\bbC)\cap\SU(2))^{r_2}
\label{nf.1}\end{equation}
with
$$
T(\bbR)\cap\SO(2)= \{\pm \Id\} \quad \mbox{and}  \quad  T(\bbC)\cap\SU(2)= \left\{ \left( \begin{matrix} e^{i\theta} & 0 \\ 0 & e^{-i\theta}  \end{matrix} \right) \; |\; \theta\in\bbR \right\},
$$
while
$$
  \Gamma_{\eta,T}\subset \left\{   \iota\left(\left(\begin{array}{cc} u & 0 \\ 0 & u^{-1}  \end{array} \right)\right) \; | \; u\in \cO_{\bbK}^* \right\}  .
$$
In particular, \eqref{nf.1} shows that
$$
   T_{\infty}/K_T\cong (((\bbR^+)^*)^{r_1+r_2}.
$$
If we set 
$$
   T_{\infty}(1):= \{ t\in T_{\infty} \; | \; \nu(t)=1\},
$$
this means $T_{\infty}(1)/K_T$ is identified with the kernel of the group homomorphism induced by $\nu$,
$$
    \begin{array}{llcl} \nu: & (((\bbR^+)^*)^{r_1+r_2} & \to & (\bbR^+)^* \\
      & (\lambda_1,\ldots,\lambda_{r_1},\mu_1,\ldots,\mu_{r_2}) & \mapsto & \left( \prod_{i=1}^{r_1} \lambda_i\right) \left( \prod_{j=1}^{r_2} \mu_j^2\right),
      \end{array}
$$
namely
$$
    T_{\infty}(1)/K_T\cong (((\bbR^+)^*)^{r_1+r_2-1}.
$$
By the Dirichlet's Unit Theorem (see for instance \cite[Theorem~38]{Marcus}), 
$$
      \Gamma_{\eta,T}\setminus T_{\infty}(1) /K_T \cong \bbT^{r_1+r_2-1}.
$$
is a real torus of dimension $r_1+r_2-1$.  
The projection map $p_{\infty}: B_{\infty}\to T_{\infty}$ induces a projection
\begin{equation}
   \phi_{\eta}: Y_{\eta}\to \Gamma_{\eta,T}\setminus T_{\infty}(1)/ K_T \cong \bbT^{r_1+r_2-1}.
\label{fb.1}\end{equation}
This map is a locally trivial fibration with fiber  
$$
    \Gamma_{\eta}\cap N_{\infty}\setminus N_{\infty}\cong \bbT^{r_1+2r_2}=\bbT^{d_{\bbK}},
$$
where 
$$
N_{\infty}= N(\bbK\otimes_{\bbQ}\bbR)= N(\bbR)^{r_1}\times N(\bbC)^{r_2}
$$
with 
$$
    N(\bbR)= \left\{ \left(\begin{array}{cc}  1 & x \\ 0 &  1  \end{array}  \right)\; | \; x\in\bbR \right\} \quad \mbox{and} \quad N(\bbC)=\left\{ \left(\begin{array}{cc} 1 & z \\ 0 & 1 \end{array}  \right)\; | \; z\in\bbC \right\}.
$$

To describe the metric $g$ in \eqref{mc.3} in the cusp end corresponding to $\eta\in \mathfrak{P}_{\Gamma}$, let us make the following change of variables with respect to the coordinates used in \eqref{mc.3}, 
\begin{equation}\label{new-coord}
\log r= \frac{1}{d_{\bbK}}\left(\sum_{i=1}^{r_1}\log y_i+ 2\sum_{j=1}^{r_2}\log t_j\right) , \quad  u_i= \log y_i-\log y_1, \quad v_j= \log t_{j}-\log y_1,
\end{equation}
if the number field $\bbK$ admits at least one real embedding.  If the number field $\bbK$ admits no real embedding, that is, if $r_1=0$, then we define $\log r$ as before and set instead
$$
     v_j= \log t_j-\log t_1 \quad \forall j\in \{2,\ldots, r_2\}.
$$
Using the conventions that $u_1=0$ if $r_1>0$ and $v_1=0$ if $r_1=0$, we set 
$$
   \mu:= \frac{1}{d_{\bbK}}\left( \sum_{i=1}^{r_1} u_i + 2 \sum_{j=1}^{r_2} v_j  \right),
$$
so that
$$
    \log y_i=\log r +u_i-\mu   \quad \mbox{and}  \quad \log t_j= \log r+v_j -\mu.
$$
Setting $\widetilde{u}_i=u_i-\mu$ and $\widetilde{v}_j=v_j-\mu$, the metric $\widetilde{g}$ becomes
$$
\widetilde{g}= d_{\bbK}\frac{dr^2}{r^2}+ d_{\bbK}g_{S_{\eta}}+ \frac{1}{r^2}\left( \sum_{i=1}^{r_1}e^{-2\tu_i}dx_i^2+ 2 \sum_{j=1}^{r_2} e^{-2\tv_j}|dz_j|^2 \right)
$$
when $r_1>0$, where $g_{S_{\eta}}$ is a flat metric on the base $S_{\eta}\cong\bbT^{r_1+r_2-1}$ of the fibered bundle \eqref{fb.1}  and the cusp is when $r\to \infty$.  Thus, the metric $g_{S_{\eta}}$ can be seen as a Euclidean metric in $u_2,\ldots, u_{r_1},v_1,\ldots, v_{r_2}$  (just in $v_2,\ldots, v_{r_2}$ if $r_1=0$), though not necessarily the canonical one.

To ease the comparison with \cite{MR1}, we will divide this metric by $d_{\bbK}$ and let 
\begin{equation}
g_{\fc}= \frac{dr^2}{r^2}+ g_{S_{\eta}}+ \frac{1}{d_{\bbK}r^2}\left( \sum_{i=1}^{r_1}e^{-2\tu_i}dx_i^2+ 2 \sum_{j=1}^{r_2} e^{-2\tv_j}|dz_j|^2 \right)
\label{mc.4}\end{equation}
be the fibered cusp metric we will consider on $X$.  For this metric, a local basis of orthonormal forms is given by
$$
    \frac{dr}{r}, \nu_1,\ldots, \nu_{r_1+r_2-1}, \frac{e^{-\tu_1} dx_1}{\sqrt{d_{\bbK}} r},\ldots, \frac{e^{-\tu_{r_1}} dx_{r_1}}{\sqrt{d_{\bbK}} r},
     \frac{e^{-\tv_1}dz_1}{\sqrt{d_{\bbK}}r}, \frac{e^{-\tv_1}d\overline{z}_1}{\sqrt{d_{\bbK}}r},\ldots, \frac{e^{-\tv_{r_2}}dz_{r_2}}{\sqrt{d_{\bbK}}r}, \frac{e^{-\tv_{r_2}}d\overline{z}_{r_2}}{\sqrt{d_{\bbK}}r},
$$
where $\nu_1,\ldots, \nu_{r_1+r_2-1}$ is a basis of orthonormal parallel forms for $g_{S_{\eta}}$.

On the other hand, in terms of these coordinates and the bundle metric of \cite{MM1963},   a local basis of orthonormal sections of the flat vector bundle $E_{m,n}$ when $|m|:=m_1+\cdots+m_{r_1}=1$ with $m_i=1$ for some fixed $i$ and $n=0$ is given  by 
$$
    \begin{array}{l}e_{i,1}= \left( \begin{array}{cc} \lambda & t \\ 0 & \lambda^{-1}\end{array} \right)\left(\begin{array}{c} 1 \\ 0  \end{array}  \right)= r^{\frac12}e^{\frac{\tu_i}2 } \left( \begin{array}{c} 1 \\ 0 \end{array} \right), \\
     e_{i,2}= \left( \begin{array}{cc} \lambda & t \\ 0 & \lambda^{-1}\end{array} \right)\left(\begin{array}{c} 0 \\ 1  \end{array}  \right)= r^{-\frac12}e^{-\frac{\tu_i}2} \left( \begin{array}{c} x_i \\ 1 \end{array} \right), 
\end{array}     \quad \lambda= \sqrt{y_i}=r^{\frac12}e^{\frac{\tu_i}2}, t= \frac{x_i}{\sqrt{y_i}}.
$$ 
If instead $m=0$ and $|n|:= n_1+\bn_1+\cdots+ n_{r_2}+\bn_{r_2}=1$ with $n_j=1$ for some fixed $j$, then a local basis of sections of $E_{m,n}$ is given by
$$
    \begin{array}{l}f_{j,1}= \left( \begin{array}{cc} \lambda & \zeta \\ 0 & \lambda^{-1}\end{array} \right)\left(\begin{array}{c} 1 \\ 0  \end{array}  \right)= r^{\frac12}e^{\frac{\tv_j}2} \left( \begin{array}{c} 1 \\ 0 \end{array} \right), \\
     f_{j,2}= \left( \begin{array}{cc} \lambda & \zeta \\ 0 & \lambda^{-1}\end{array} \right)\left(\begin{array}{c} 0 \\ 1  \end{array}  \right)= r^{-\frac12}e^{-\frac{\tv_j}2} \left( \begin{array}{c} z_j \\ 1 \end{array} \right), 
\end{array}     \quad \lambda= \sqrt{t_j}=r^{\frac12}e^{\frac{\tv_j}{2}}, \zeta= \frac{z_j}{\sqrt{t_j}}.
$$ 
Finally, if $m=0$ and $|n|:= n_1+\bn_1+\cdots+ n_{r_2}+\bn_{r_2}=1$ with $\bn_j=1$ for some fixed $j$, then a local basis of sections of $E_{m,n}$ is given by
$$
 \begin{array}{l}\barf_{j,1}= \left( \begin{array}{cc} \lambda & \overline{\zeta} \\ 0 & \lambda^{-1}\end{array} \right)\left(\begin{array}{c} 1 \\ 0  \end{array}  \right)= r^{\frac12}e^{\frac{\tv_j}2} \left( \begin{array}{c} 1 \\ 0 \end{array} \right), \\
     \barf_{j,2}= \left( \begin{array}{cc} \lambda & \overline{\zeta} \\ 0 & \lambda^{-1}\end{array} \right)\left(\begin{array}{c} 0 \\ 1  \end{array}  \right)= r^{-\frac12}e^{-\frac{\tv_j}2} \left( \begin{array}{c} \bz_j \\ 1 \end{array} \right), 
\end{array}     \quad \lambda= \sqrt{t_j}=r^{\frac12}e^{\frac{\tv_j}{2}}, \zeta= \frac{z_j}{\sqrt{t_j}}.
$$
Notice in particular that $\barf_{j,1}$ and $\barf_{j,2}$ are precisely the complex conjugates of $f_{j,1}$ and $f_{j,2}$ respectively.
Hence, more generally, 
$$
    w_{k,l}:= \left(\bigotimes_{i=1}^{r_1}(e_{i,1}^{k_i}\otimes e_{i,2}^{m_i-k_i})\right) \otimes \left( \bigotimes_{j=1}^{r_2}(f_{j,1}^{l_j}\otimes \barf_{j,1}^{\overline{l_j}}\otimes f_{j,2}^{n_j-l_j}\otimes \barf_{j,2}^{\bn_j-\overline{l}_j})\right),
$$
for $k\le m\in \bbN^{r_1}_0$ and  $l\le n\in\bbN^{2r_2}_0$,  
form a local basis of orthonormal sections of $E_{m,n}$.  One computes that 
$$
  de_{i,1}= \frac12\left( \frac{dr}{r}+ d\tu_i \right)e_{i,1}, \quad de_{i,2}= \left( e^{-\tu_i}\frac{dx_i}{r} \right)e_{i,1}- \frac{1}2 \left(  \frac{dr}{r}+d\tu_i \right)e_{i,2} \quad 
$$
and
$$
df_{j,1}= \frac{1}2\left( \frac{dr}{r}+d\tv_j \right)f_{j,1}, \quad df_{j,2}= \left(e^{-\tv_j} \frac{dz_j}{r}\right)f_{j,1} -\frac{1}2 \left( \frac{dr}r+ d\tv_j \right)f_{j,2},
$$
so that
\begin{multline}
 dw_{k,l}= \left[ \frac{(2|k|+2|l|-|m|-|n|)}2\frac{dr}r  + \sum_{i=1}^{r_1}\frac{2k_i-m_i}2 d\tu_i + \sum_{j=1}^{r_2} \frac{2(l_j+\overline{l}_j)-n_j-\bn_j}2 d\tv_j\right]w_{k,l} \\
 + \sum_{i=1}^{r_1}(m_i-k_i)\left(e^{-\tu_i}\frac{dx_i}{r}\right)w_{k+1_i,l}  \\
 + \sum_{j=1}^{r_2}\lrp{
   (n_j-l_j)\left( e^{-\tv_j}\frac{dz_j}{r} \right)w_{k,l+1_j}  + (\bn_j-\overline{l}_j)\left( e^{-\tv_j}\frac{d\bz_j}{r} \right)w_{k,l+\overline{1}_j} },
\label{dif.1}\end{multline}
where $1_i$ is the $r_1$-tuple with $i$th entry equal to one and the other entries equal to zero, $1_j$ is the $2r_2$-tuple with $(2j-1)$th entry equal to 1 and other entries equal to zero and $\overline{1}_j$ is the $2r_2$-tuple with $(2j)$th entry equal to 1 and other entries equal to zero.  

Since the representation $\rho_{m,n}$ is self-dual, notice that the dual flat vector bundle $E_{m,n}^*$ is naturally isomorphic to $E_{m,n}$ as a flat vector bundle, as well as a hermitian vector bundle.  This can be seen directly in terms of the local sections $w_{i,j}$.   The orthonormal basis of sections dual to $\{e_{i,1},e_{i,2}\}$ is given by
$$
    e^{1}_i= r^{-\frac12}e^{-\frac{\tu_i}2} \left( \begin{array}{cc} 1 & -x_i  \end{array} \right) \quad \mbox{and} \quad e^2_i= r^{\frac12}e^{\frac{\tu_i}2}\left(\begin{array}{cc} 0 & 1  \end{array}  \right),
$$
while the local orthonormal basis of sections dual to $\{f_{j,1},f_{j,2}\}$  is given by 
$$
    f^{1}_j= r^{-\frac12}e^{-\frac{\tv_j}2} \left( \begin{array}{cc} 1 & -z_j  \end{array} \right) \quad \mbox{and} \quad f^2_j= r^{\frac12}e^{\frac{\tv_j}{2}}\left(\begin{array}{cc} 0 & 1  \end{array}  \right)
$$
with their complex conjugates $ \barf^{1}_j$ and $\barf^{2}_j$ giving the local orthonormal basis of sections dual to $\{\barf_{j,1},\barf_{j,2}\}$.
For these sections, one computes that 
$$
\begin{aligned}
d e^1_i&= -\frac12\left( \frac{dr}r+d\tu_i  \right)e^1_i- \left( \frac{e^{-\tu_i}dx_i}{r}\right)e^2_i, \\
de^2_i &= \frac12\lrp{\frac{dr}r+ d\tu_i} e^2_i, \\
df^1_j &= -\frac12 \lrp{\frac{dr}r+ d\tv_j }f^1_j -\lrp{\frac{e^{-\tv_j}dz_j}{r}}f^2_j, \\
df^2_j &= \frac12 \lrp{\frac{dr}r+d\tv_j}f^2_j,
\end{aligned}
$$
so that the natural isomorphisms of flat vector bundles $E_{1_i,0}\to E^*_{1_i,0}$, $E_{0,1_j}\to (E_{0,1_j})^*$ and $E_{0,\overline{1}_j}\to (E_{0,\overline{1}_j})^*$are induced by
$$
     e_{i,1}\mapsto e^2_i, \; e_{i,2}\mapsto -e^1_i, \quad f_{j,1}\mapsto f^2_j, \; f_{j,2}\mapsto -f^1_j \quad \mbox{and} \quad \quad \barf_{j,1}\mapsto \barf^2_j, \; \barf_{j,2}\mapsto -\barf^1_j.
$$

\section{Cusp degeneration and the Hodge-deRham operator} \label{hdR.0}

As in \cite{MR1}, $X$ can be compactified to a manifold with boundary $\bX$ by adding a copy of $Y_{\eta}$ at infinity for each cusp end
$\eta\in \mathfrak{P}_{\Gamma}$,
$$
\bX= X \cup \lrp{ \sqcup_{\eta\in\mathfrak{P}_{\Gamma}} Y_{\eta} }, \quad \pa \bX= \sqcup_{\eta\in\mathfrak{P}_{\Gamma}} Y_{\eta}.
$$
On $\bX$, we can choose a boundary defining function $x$ such that for each $\eta\in \mathfrak{P}_{\Gamma}$, 
$x=\frac{1}{r}$ in the cusp end \eqref{mc.4} corresponding to $\eta$.   
If 
$$
  M = \bX\cup_{\pa \bX}\bX
$$ 
is the double of $\bX$ along $\pa X$, then on $M$, one can consider a family of metrics $g_{\epsilon}$ parametrized by $\epsilon>0$ which in a tubular neighborhood $Y_{\eta}\times (-\delta,\delta)_x$ of $Y_{\eta}$ in $M$ takes the form
\begin{equation}
 g_{\fc,\epsilon}=\frac{dx^2}{\rho^2}+ g_{S_{\eta}}+ \frac{\rho^2}{d_{\bbK}}\left( \sum_{i=1}^{r_1} e^{-2\tu_i}dx_i^2+ 2\sum_{j=1}^{r_2} e^{-2\tv_j}|dz_j|^2 \right),  \quad \rho:= \sqrt{x^2+\epsilon^2} 
\label{model.1}\end{equation}
and which on $M\setminus \pa\bX= X\sqcup X$ converges to $g_{\fc}$ on each copy of $X$ as $\epsilon\searrow 0$.  
\begin{figure}[h]
\begin{tikzpicture}
\draw(1,0) arc[radius=1, start angle=0, end angle=180];
\draw (-3,0)--(-1,0);
\draw[->] (1,0)--(3,0);
\draw[->] (0,1)--(0,4);
\node at (0,0.5) {$\bhs{sb}$};
\node at (-2,-0.3) {$\bhs{sm}$};
\node at (3.2,0.2) {$x$};
\node at (0.2,4.2) {$\epsilon$};
\end{tikzpicture}
\caption{The single surgery space $X_s$}
\label{fig.1}\end{figure}

For such a family of metrics, we can consider as in \cite{ARS1} the single surgery space of Mazzeo-Melrose \cite{mame1}
\begin{equation}
   X_s= [M\times [0,1]_{\epsilon};\pa\bX\times \{0\}]
\label{sss.1}\end{equation}
obtained by blowing up $\pa\bX\times \{0\}$ inside $M\times [0,1]_{\epsilon}$ in the sense of Melrose.  It is a manifold with corners with natural blow-down map $\beta_s:X_s\to M\times [0,1]_{\epsilon}$.  We denote by $\bhs{sm}:=\overline{\beta_s^{-1}(M\setminus \pa\bX)\times\{0\}}$ the lift of the old boundary hypersurface at $\epsilon=0$ and by $\bhs{sb}:=\beta_s^{-1}(\pa\bX\times \{0\})$ the new boundary hypersurface created by the blow-up.

There is a corresponding flat vector bundle $\hE_{m,n}$  corresponding to $E_{m,n}$ on each copy of $X$.  We can equip $\hE_{m,n}$ with a bundle metric $h_{\epsilon}$ depending on $\epsilon>0$ and such that $h_{\epsilon}\to h$ on each copy of $X$ as $\epsilon\searrow 0$.  To describe this metric near $Y_{\eta}$, it suffices to give a local basis of orthonormal sections, which we take to be
$$
    \hat{w}_{k,l}:= \left(\bigotimes_{i=1}^{r_1}(\hat{e}_{i,1}^{k_i}\otimes \hat{e}_{i,2}^{m_i-k_i})\right) \otimes \left( \bigotimes_{j=1}^{r_2}(\hat{f}_{j,1}^{l_j}\otimes \hat{\barf}_{j,1}^{\overline{l}_j}\otimes \hat{f}_{j,2}^{n_j-l_j}\otimes  \hat{\barf}_{j,2}^{\bn_j-\overline{l}_j})\right)
$$
for $k\le m\in \bbN^{r_1}_0$ and $ l\le n\in\bbN^{2r_2}_0$   with
$$
    \hat{e}_{i,1}:= \rho^{-\frac12}e^{\frac{\tu_i}2} \left(\begin{array}{c} 1\\ 0  \end{array}\right), \; \hat{e}_{i,2}:= \rho^{\frac12}e^{-\frac{\tu_i}2} \lrp{\begin{array}{c} x_i\\1  \end{array}}, \; \hat{f}_{j,1}:= \rho^{-\frac12}e^{\frac{\tv_j}2} \lrp{ \begin{array}{c} 1\\0  \end{array}}, \; \hat{f}_{j,2}:= \rho^{\frac12}e^{-\frac{\tv_j}2} \lrp{ \begin{array}{c} z_j\\1  \end{array}}
$$
and $\hat{\barf}_{j,1}, \hat{\barf}_{j,2}$ the complex conjugates of $\hat{f}_{j,1}$ and $\hat{f}_{j,2}$.
In terms of these sections, notice that the following analog of \eqref{dif.1} holds,
\begin{multline}
 d\hat{w}_{k,l}= \left[ \frac{(|m|+|n|-2|k|-2|l|)}2\frac{x}{\rho}\frac{dx}{\rho}  + \sum_{i=1}^{r_1}\frac{2k_i-m_i}2 d\tu_i + \sum_{j=1}^{r_2} \frac{2(l_j+\overline{l}_j)-n_j-\bn_j}2 d\tv_j\right]\hat{w}_{k,l} \\
 + \sum_{i=1}^{r_1}(m_i-k_i)\left(e^{-\tu_i}\rho dx_i\right)\hat{w}_{k+1_i,l}  \\
 +\sum_{j=1}^{r_2}
   \lrp{(n_j-l_j)\left( e^{-\tv_j}\rho dz_j \right)\hat{w}_{k,l+1_j} +(\bn_j-\overline{l}_j)\left( e^{-\tv_j}\rho d\bz_j \right)\hat{w}_{k,l+\overline{1}_j}    }.
\label{dif.1b}\end{multline}

Let $\eth_{\fc,\epsilon}=d+d^*$ be the Hodge-deRham operator associated to $(M,\hE_{m,n}, g_{\fc,\epsilon},h_{\epsilon})$, where $d$ is the exterior differential and $d^*$ is its formal adjoint with respect to $g_{\fc,\epsilon}$ and $h_{\epsilon}$.  In a tubular neighborhood $Y_{\eta}\times (-\delta,\delta)_x$ of $Y_{\eta}$ in $M$, we can describe $\eth_{\fc,\epsilon}$ in terms of the decomposition of forms
\begin{equation}
    \omega= \omega_0 + \frac{dx}{\rho}\wedge \omega_1
\label{dec.1}\end{equation}
with $\omega_0$ and $\omega_1$ forms not involving $\frac{dx}{\rho}$.  To do so, we need to introduce a weight operator $W$ defined on forms by the number operator in the fibers of the fiber bundle $Y_{\eta}\to S_{\eta}$,
\begin{equation}
\begin{aligned}
   &  W\left(\frac{dx}{\rho}\right)= W(\nu_q)=0, \quad q\in \{1,\ldots, r_1+r_2-1\},  \\
    &  W(\omega)=\omega \quad \mbox{for} \quad \omega\in\left\{ e^{-\tu_i}\rho dx_i, e^{-\tv_j}\rho dz_j, e^{-\tv_j}\rho d\overline{z}_j \right\}, \quad i\le r_1, \; j\le r_2, 
\end{aligned}      
\label{w.1}\end{equation}
 and on sections of $\hE_{m,n}$ by
 \begin{equation}
  W(\hat{w}_{k,l})= \lrp{\frac{|m|+|n|}2-|k|-|l|}\hat{w}_{k,l}.  
 \label{w.2}\end{equation}
 In terms of this weight and the decomposition \eqref{dec.1}, we have that
 \begin{equation}
   \eth_{\fc,\epsilon}= \lrp{\begin{array}{cc} \rho^{-1}\eth_{Y_{\eta}/S_{\eta}}+ \eth_{S_{\eta}} & \frac{x}{\rho}(W-d_{\bbK})-\rho\frac{\pa}{\pa x} \\
     \rho\frac{\pa}{\pa x}+ \frac{x}{\rho}W & -\rho^{-1}\eth_{Y_{\eta}/S_{\eta}}- \eth_{S_{\eta}} \end{array}}
 \label{dec.2}\end{equation}
For some vertical and horizontal operators $\eth_{Y_{\eta}/S_{\eta}}$ and $\eth_{S_{\eta}}$ with respect to the fiber bundle $\phi_{\eta}:Y_{\eta}\to S_{\eta}$ of \eqref{fb.1}.  Compared to \cite[\S~2.2]{ARS1}, notice that there is no curvature term, which is due to the fact that the Riemannian submersion $\phi_{\eta}:Y_{\eta} \to S_{\eta}$ has trivial curvature in the sense of \cite[\S~10.1]{BGV}.
 
Let $D_{v,\eta}$ be the vertical family of \cite{ARS1} associated to the family of Hodge-deRham operators $\eth_{\fc,\epsilon}$ at $Y_{\eta}$, that is,
$$
 D_{v,\eta}= \lrp{\begin{array}{cc} \eth_{Y_{\eta}/S_{\eta}}& 0 \\
      0 & -\eth_{Y_{\eta}/S_{\eta}}\end{array}}.
$$  

This is a family of operators acting fiberwise on the fibers of the fiber bundle 
$$
     \phi_{\eta}: Y_{\eta}\to S_{\eta}.
$$
As in \cite{MR1}, in each fiber, it corresponds to the Hodge-deRham operator with respect to the induced fiberwise metric and trivial flat vector bundle of rank $\rank E_{m,n}$, that is the trivial vector bundle $E_{m,n}$ with flat structure obtained by declaring the sections $\hat{w}_{i,j}$ to be flat.  Its kernel is naturally a vector bundle over $S_{\eta}$ with local sections given by
$$
    \Lambda^*\langle \frac{dx}{\rho}, \nu_q, e^{-\tu_i}\rho dx_i, e^{-\tv_j}\rho dz_j, e^{-\tv_j}\rho d\bz_j \; | \; q\le r_1+r_2-1, i\le r_1, j\le r_2 \rangle\otimes \langle \hat{w}_{k,l}\; | \; k\le m, \; l\le n\rangle.
$$ 
The corresponding horizontal operator $D_{b,\eta}$ of \cite{ARS1} is obtained by letting 
\begin{equation}
\rho^{\frac{d_{\bbK}}2}\eth_{\fc,\epsilon}\rho^{-\frac{d_\bbK}2}= \lrp{\begin{array}{cc} \rho^{-1}\eth_{Y_{\eta}/S_{\eta}}+ \eth_{S_{\eta}} & \frac{x}{\rho}\left(W-\frac{d_{\bbK}}{2}\right)-\rho\frac{\pa}{\pa x} \\
     \rho\frac{\pa}{\pa x}+ \frac{x}{\rho}\left(W-\frac{d_{\bbK}}2 \right) & -\rho^{-1}\eth_{Y_{\eta}/S_{\eta}}- \eth_{S_{\eta}} \end{array}}
\label{model.2}\end{equation}
 act on such sections extended smoothly off the boundary hypersurface $\bhs{sb}$ of $X_s$ (defined in \eqref{sss.1}) and then restricting back to $\bhs{sb}$.  A computation shows that in terms of the decomposition \eqref{dec.1}, 
\begin{equation}
 D_{b,\eta}=\lrp{ \begin{array}{cc} \eth_{S_{\eta}} & -D(\frac{d_{\bbK}}2-W) \\D(W-\frac{d_{\bbK}}2) & -\eth_{S_{\eta}}  \end{array}   },
\label{dec.3}\end{equation}
where 
$$
  D(a)=\langle X\rangle^{-a} \langle X\rangle \frac{\pa}{\pa X} \langle X\rangle^a= \langle X\rangle \frac{\pa}{\pa X}+ \frac{aX}{\langle X\rangle},  \quad a\in \bbR, \quad X= \frac{x}{\epsilon}, \; \langle X\rangle= \sqrt{1+X^2},   
$$ 
 and $\eth_{S_{\eta}}$ is the Hodge-deRham operator on $S_{\eta}$ (with metric induced by $g_{\fc}$) acting on the flat vector bundle generated by the space of sections
 \begin{equation}
 \cC^*=  \Lambda^*\langle e^{-\tu_i}\rho dx_i, e^{-\tv_j}\rho dz_j, e^{-\tv_j}\rho d\bz_j \; | \;  i\le r_1, j\le r_2 \rangle\otimes \langle \hat{w}_{k,l}\; | \; k\le m, \; l\le n\rangle.
 \label{dec.4}\end{equation}
 Let $\eth_{\cC}$ be the natural (Kostant type) Hodge-deRham operator associated with the finite dimensional complex \eqref{dec.4} with differential $d_{\cC}$ defined by 
 \begin{multline}
    d_\cC \hat{w}_{k,l}= \sum_{i=1}^{r_1}(m_i-k_i)\left(e^{-\tu_i}\rho dx_i\right)\hat{w}_{k+1_i,l}  \\+ \sum_{j=1}^{r_2}
   \lrp{  (n_j-l_j)\left( e^{-\tv_j}\rho dz_j \right)\hat{w}_{k,l+1_j} + (\bn_j-\overline{l}_j)\left( e^{-\tv_j}\rho d\bz_j \right)\hat{w}_{k,l+\overline{1}_j}   }
\label{dc.1}\end{multline}
with natural metric induced by $g_{\fc,\epsilon}$ and $h_{\epsilon}$.  By Hodge theory, the kernel of $\eth_{\cC}$ corresponds to the cohomology of the complex $\cC^*$.  It admits in fact the following explicit description.
\begin{lemma}
The kernel of $\eth_{\cC}$ is given by
\begin{multline}
\cH^*(\cC)=\left(\bigwedge_{i=1}^{r_{1}} \langle \hat{e}^{m_i}_{i,1}, \hat{e}_{i,2}^{m_i}e^{-\tu_i}\rho dx_i \rangle\right) \wedge \\
\left( \bigwedge_{j=1}^{r_2} \langle \hat{f}_{j,1}^{n_j}\otimes\hat{\barf}^{\bn_j}_{j,1}, \hat{f}^{n_j}_{j,1}\otimes\hat{\barf}^{\bn_j}_{j,2}e^{-\tv_j}\rho d\bz_j, \hat{f}_{j,2}^{n_j}\otimes\hat{\barf}^{\bn_j}_{j,1}e^{-\tv_j}\rho dz_j, \hat{f}^{n_j}_{j,2}\otimes\hat{\barf}^{\bn_j}_{j,2}e^{-2\tv_j}\rho^2dz_j\wedge d\bz_j \rangle \right).
\end{multline}
\label{dec.5c}\end{lemma}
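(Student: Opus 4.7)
The plan is to exploit the tensor product structure of $(\cC^*, d_\cC)$ corresponding to the factorization $G_\infty = \SL(2,\bbR)^{r_1}\times \SL(2,\bbC)^{r_2}$ and reduce the computation of $\ker \eth_\cC$ to K\"unneth applied to the harmonics of simpler factor complexes.  For each $i \in \{1,\ldots,r_1\}$, I would introduce the two-term complex $\cA_i^*$ with degree-$0$ basis $\{\hat{e}_{i,1}^{k_i}\hat{e}_{i,2}^{m_i-k_i}\}_{0 \le k_i \le m_i}$ and degree-$1$ basis obtained by multiplying these by $e^{-\tu_i}\rho dx_i$; similarly for each $j$ I would define $\cB_j^* := \cB_j^{f,*} \otimes \cB_j^{\barf,*}$, where the two factors are the analogous two-term complexes built from $(\hat{f}_{j,1},\hat{f}_{j,2};\, e^{-\tv_j}\rho dz_j)$ and $(\hat{\barf}_{j,1},\hat{\barf}_{j,2};\, e^{-\tv_j}\rho d\bz_j)$ respectively.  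Inspection of \eqref{dc.1} shows that each of the three summands of $d_\cC$ modifies only one tensor factor at a time, so $(\cC^*, d_\cC)$ is the tensor product, as $\bbZ$-graded complexes, of the $\cA_i^*$ and $\cB_j^*$.

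Next I would compute the cohomology of each factor directly.  For $\cA_i^*$, the differential sends $\hat{e}_{i,1}^{k_i}\hat{e}_{i,2}^{m_i - k_i}$ to $(m_i - k_i)$ times the corresponding degree-$1$ element, so the coefficient vanishes only at $k_i = m_i$.  This yields $H^0(\cA_i^*) = \langle \hat{e}_{i,1}^{m_i}\rangle$, and a dimension count identifies the unique degree-$1$ direction not in the image as $\hat{e}_{i,2}^{m_i}\, e^{-\tu_i}\rho dx_i$, giving $H^1(\cA_i^*)$.  The factors $\cB_j^{f,*}$ and $\cB_j^{\barf,*}$ have exactly the same shape, and the K\"unneth formula assembles their harmonics into the four generators of $H^*(\cB_j^*)$ listed in the statement, the top degree-$2$ class being the product $\hat{f}_{j,2}^{n_j}\hat{\barf}_{j,2}^{\bn_j}\, e^{-2\tv_j}\rho^2\, dz_j\wedge d\bz_j$.

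To conclude, finite-dimensional Hodge theory identifies $\ker \eth_\cC$ with $H^*(\cC^*)$, and it remains only to check that the specific cocycles listed in the statement are actually harmonic (i.e.\ also coclosed).  With respect to the natural hermitian inner product induced by $g_{\fc,\epsilon}$ and $h_\epsilon$, the sections $\hat{w}_{k,l}$ are orthonormal and the one-forms $e^{-\tu_i}\rho dx_i$, $e^{-\tv_j}\rho dz_j$, $e^{-\tv_j}\rho d\bz_j$ are pairwise orthogonal of constant length, so $d_\cC^*$ simply transposes the multiplication and index-raising structure of $d_\cC$.  A short bookkeeping check then confirms each listed generator is orthogonal to the image of $d_\cC$, hence lies in $\ker d_\cC \cap \ker d_\cC^* = \ker \eth_\cC$.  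I do not anticipate any real obstacles here; the main care required is tracking the tensor product signs, but no substantive analysis is involved beyond the reduction to factor complexes.
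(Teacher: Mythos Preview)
Your proposal is correct and follows essentially the same approach as the paper: decompose $(\cC^*,d_\cC)$ as a tensor product of two-term factor complexes indexed by the real and complex places, compute the cohomology of each factor directly, and assemble via K\"unneth. The paper phrases the final step slightly differently---it writes $\eth_\cC$ as a sum of pairwise anti-commuting factor Hodge--deRham operators and reads off the common kernel---but this is the same content as your Hodge-theory identification plus harmonicity check.
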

\begin{proof}
Notice first that the complex $\cC^*$ admits the decomposition into subcomplexes
\begin{equation}
    \cC^*= \left( \bigwedge_{i=1}^{r_1} \cC_i^* \right)\wedge \left( \bigwedge_{j=1}^{r_2} \cD^*_j \right) \wedge \left( \bigwedge_{j=1}^{r_2} \overline{\cD}^*_j \right),
\label{dec.5d}\end{equation}
where 
$$
  \cC_i^*=\langle \hat{e}^{k_i}_{i,1}\otimes \hat{e}^{m_i-k_i}_{i,2}, \hat{e}^{k_i}_{i,1}\otimes \hat{e}^{m_i-k_i}_{i,2} e^{-\tu_i}\rho dx_i \; | \; k_i\in \{0,1,\ldots, m_i\} \rangle
$$
is the complex with differential given by
$$
\begin{aligned}
 & d_{\cC_i}(\hat{e}^{k_i}_{i,1}\otimes \hat{e}^{m_i-k_i}_{i,2} )=  (m_i-k_i)e^{k_i+1}_{i,1}\otimes e^{m_i-k_i-1}_{i,2} e^{-\tu_i}\rho dx_i, \\
 & d_{\cC_i}(\hat{e}^{k_i}_{i,1}\otimes \hat{e}^{m_i-k_i}_{i,2} e^{-\tu_i}\rho dx_i) = 0,
\end{aligned}
$$
while 
$$
 \cD_j^*=\langle \hat{f}^{l_j}_{j,1}\otimes \hat{f}^{n_j-l_j}_{j,2} \; | \; l_j\in \{0,1,\ldots, n_j\} \rangle \otimes \langle 1, dz_j \rangle
$$
is the complex with differential given by
$$
d_{\cD_j} (\hat{f}^{l_j}_{j,1}\otimes \hat{f}^{n_j-l_j}_{j,2}\wedge \omega)= (n_j-l_j) \hat{f}^{l_j+1}_{j,1}\otimes \hat{f}^{n_j-l_j-1}_{j,2} e^{-\tv_j}\rho dz_j\wedge \omega
$$
for $\omega\in  \langle 1, dz_j \rangle$ and $\overline{\cD}^{*}_j$ is its complex conjugate.  There are corresponding Hodge-deRham operators $\eth_{\cC_i}$ and $\eth_{\cD_j\wedge \overline{\cD}_j}$.  A direct computation shows that their kernels are respectively given by
$$
   \cH^*(\cC_i)= \langle \hat{e}^{m_i}_{i,1}, \hat{e}_{i,2}^{m_i}e^{-\tu_i}\rho dx_i \rangle,
$$
$$
 \cH^*(\cD_j\wedge \overline{\cD}_j)= \langle \hat{f}_{j,1}^{n_j}, \hat{f}_{j,2}^{n_j}e^{-\tv_j}\rho dz_j \rangle \wedge \langle \hat{\barf}_{j,1}^{n_j}, \hat{\barf}_{j,2}^{n_j}e^{-\tv_j}\rho d\bz_j \rangle.
$$
Since 
$$
  \eth_{\cC}= \sum_{i=1}^{r_1}\eth_{\cC_i}+ \sum_{j=1}^{r_2} \eth_{\cD_j\wedge \overline{\cD}_j} 
$$
and the various Hodge-deRham operators in this sum anti-commute, the result follows.  In terms of cohomology, this is just the K\"unneth formula for the decomposition \eqref{dec.5d}.  
\end{proof}

The differential $d_{S_{\eta}}$ of the complex
 \begin{equation}
     \Omega^*(S_{\eta};\ker\eth_{Y_{\eta}/S_{\eta}})
\label{dec.5}\end{equation}
is then of the form
\begin{equation}
    d_{S_{\eta}}= \widetilde{d}_{S_{\eta}}+ d_{\cC}
\label{dec.5b}\end{equation}
with $\widetilde{d}_{S_{\eta}}$ also a differential, namely
\begin{equation}
\begin{aligned}
  & \widetilde{d}_{S_{\eta}} \hat{w}_{k,l}= \left[  \sum_{i=1}^{r_1}\frac{2k_i-m_i}2 d\tu_i + \sum_{j=1}^{r_2} \frac{2(l_j+\overline{l}_j)-n_j-\bn_j}2 d\tv_j\right]\hat{w}_{k,l}, \\
  & \widetilde{d}_{S_{\eta}}  (e^{-\widetilde{u}_i}\rho dx_i) = -d\widetilde{u_i}\wedge (e^{-\widetilde{u}_i}\rho dx_i), \quad i\le r_1, \\
   &\widetilde{d}_{S_{\eta}} (e^{-\widetilde{v}_j}\rho dz_j) = -d\widetilde{v}_j\wedge (e^{-\widetilde{v}_j}\rho dz_j), \quad j\le r_2, \\
  &\widetilde{d}_{S_{\eta}} (e^{-\widetilde{v}_j}\rho d\overline{z}_j) = -d\widetilde{v}_j\wedge (e^{-\widetilde{v}_j}\rho d\overline{z}_j), \quad j\le r_2, 
\end{aligned}   
\label{dec.5f}\end{equation}
and 
$$
   \widetilde{d}_{S_{\eta}} ( \omega \wedge \nu)= (d\omega) \wedge \nu + (-1)^q \omega\wedge \widetilde{d}_{S_{\eta}}\nu \quad \mbox{for} \quad \omega\in\Omega^q(S_{\eta}), \; \nu\in \cC^*.
$$
Using the natural metric induced by $g_{\fc,\epsilon}$ and $h_{\epsilon}$, the differential $\widetilde{d}_{S_{\eta}}$ has a formal adjoint $d^*_{S_{\eta}}$.  In particular, there is a corresponding Hodge-deRham operator $\widetilde{\eth}_{S_{\eta}}$ and we see from \eqref{dec.5b} that
$$
  \eth_{S_{\eta}}= \widetilde{\eth}_{S_{\eta}}+ \eth_{\cC}.
$$  
Since, $d_{S_{\eta}}=\widetilde{d}_{S_{\eta}}+d_{\cC}$ is a differential, we see that the differentials $\widetilde{d}_{S_{\eta}}$ and $d_{\cC}$ anti-commute.  Correspondingly, their formal adjoints $\widetilde{d}_{S_{\eta}}^*$ and $d_{\cC}^*$ anti-commute.  In fact, writing the formal adjoints in terms of the corresponding Hodge star operators we see that $\widetilde{\eth}_{S_{\eta}}$ and $\eth_{\cC}$ anti-commutes.  Hence, we see that
$$
   \eth_{S_{\eta}}^2= \widetilde{\eth}_{S_{\eta}}^2+ \eth_{\cC}^2.
$$
This implies that an element in the kernel of $\eth_{S_{\eta}}$ will be in the kernels of $\widetilde{\eth}_{S_{\eta}}$ and $\eth_{\cC}$ and vice-versa.   Combined with Lemma~\ref{dec.5c}, this can be used to obtain the following description of $\ker \eth_{S_{\eta}}$.
\begin{lemma}
If $r_1\ge 1$, the kernel of $\eth_{S_{\eta}}$ is trivial unless $m_1=\cdots=m_{r_1}$ and $n_j\in\{2m_1-\bn_j,2m_1+2+\bn_j, \bn_j-2m_1-2\}$ for all $j\in\{1,\ldots,r_2\}$.  In this latter case, the kernel is  given by
\begin{multline}
   \langle w_{m,l}\left( \bigwedge_{j\in J} e^{-\tv_j}\rho dz_j\right)\wedge \left( \bigwedge_{j\in \overline{J}} e^{-\tv_j}\rho d\bz_j\right) ,  \\
   w_{0,n-l} \left(\bigwedge_{i=1}^{r_1} e^{-\tu_i}\rho dx_i\right)\wedge \left( \bigwedge_{j\notin J} e^{-\tv_j}\rho dz_j \right)\wedge \left( \bigwedge_{j\notin \overline{J}} e^{-\tv_j}\rho d\bz_j\right)\rangle\wedge \cH^*(S_{\eta}),
\end{multline}
where $J= \{ j\in \{1,\ldots, r_2\}\; | \; n_j= \bn_j-2m_1-2\}$, $\overline{J}= \{ j\in \{1,\ldots, r_2\}\; | \; n_j= 2m_1+2+\bn_j\}$,
$$
     l_j= \left\{ \begin{array}{ll} 0, & j\in J, \\ n_j, & j\notin J, \end{array}\right. \quad  \overline{l}_j= \left\{ \begin{array}{ll} 0, & j\in \overline{J}, \\ \overline{n}_j, & j\notin \overline{J}, \end{array}\right.
$$

 and $\cH^*(S_{\eta})$, which can be identified with the cohomology ring of $S_{\eta}\cong \bbT^{r_1+r_2-1}$, is the finite dimensional exterior algebra generated by $d\tu_2,\ldots, d\tu_{r_1}, d\tv_1,\ldots, d\tv_{r_2}$.
\label{dec.5e}\end{lemma}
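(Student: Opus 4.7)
The plan is to exploit the anticommutation $\widetilde{\eth}_{S_\eta}\eth_\cC+\eth_\cC\widetilde{\eth}_{S_\eta}=0$ derived just above, which gives $\eth_{S_\eta}^2=\widetilde{\eth}_{S_\eta}^2+\eth_\cC^2$ and hence identifies $\ker\eth_{S_\eta}$ with $\ker\widetilde{\eth}_{S_\eta}\cap\ker\eth_\cC$. By Lemma~\ref{dec.5c} the second factor is the finite-dimensional space $\cH^*(\cC)$, which inherits from the decomposition \eqref{dec.5d} a natural tensor-product basis: one of two summands of $\cH^*(\cC_i)$ per $i\in\{1,\ldots,r_1\}$ and one of four summands of $\cH^*(\cD_j\wedge\overline{\cD}_j)$ per $j\in\{1,\ldots,r_2\}$. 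I would therefore analyze each one-dimensional subspace $\bbC\nu_0\subset \cH^*(\cC)$ separately.

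The key observation is that the explicit formulas \eqref{dec.5f} imply that $\widetilde{d}_{S_\eta}$ preserves each such line and acts on $\Omega^*(S_\eta)\otimes\nu_0$ as a flat connection $d+\xi_{\nu_0}\wedge\cdot$ on the trivial line bundle, for a translation-invariant closed 1-form $\xi_{\nu_0}$ on $S_\eta$ obtained by summing the Leibniz-rule contributions of the tensor factors making up $\nu_0$. A direct computation would then give the $d\widetilde{u}_i$-coefficient of $\xi_{\nu_0}$ as $\tfrac{m_i}{2}$ (if one picks $\hat{e}_{i,1}^{m_i}$) or $-\tfrac{m_i+2}{2}$ (if one picks $\hat{e}_{i,2}^{m_i}e^{-\widetilde{u}_i}\rho\,dx_i$), and the $d\widetilde{v}_j$-coefficient as one of $\tfrac{n_j+\bn_j}{2}$, $\tfrac{n_j-\bn_j-2}{2}$, $\tfrac{\bn_j-n_j-2}{2}$, $-\tfrac{n_j+\bn_j+4}{2}$, according to which of the four $j$-summands is chosen. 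Since the chosen basis of $\cH^*(\cC)$ is orthogonal for the natural inner product, the formal adjoint also preserves each leaf, so $\widetilde{\eth}_{S_\eta}$ restricts to the twisted Hodge--de~Rham operator of $d+\xi_{\nu_0}$.

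Since $S_\eta\cong\bbT^{r_1+r_2-1}$ is a flat torus and $\xi_{\nu_0}$ has real, translation-invariant coefficients, standard Hodge theory for flat line bundles on tori then forces $\ker\widetilde{\eth}_{S_\eta}$ on the leaf $\Omega^*(S_\eta)\otimes\nu_0$ to vanish unless all holonomies are trivial, which in the real case amounts to $\xi_{\nu_0}\equiv 0$ on $S_\eta$; when this holds, the kernel on that leaf is $\cH^*(S_\eta)\otimes\nu_0$. To turn $\xi_{\nu_0}\equiv 0$ into concrete arithmetic conditions, one must eliminate the linear relation $d\widetilde{u}_1+\sum_{i=2}^{r_1}d\widetilde{u}_i+2\sum_j d\widetilde{v}_j=0$ coming from $\sum_i\widetilde{u}_i+2\sum_j\widetilde{v}_j=0$, passing to the basis $d\widetilde{u}_2,\ldots,d\widetilde{u}_{r_1},d\widetilde{v}_1,\ldots,d\widetilde{v}_{r_2}$ of $\Omega^1(S_\eta)$; this converts the vanishing condition into the system $A_i=A_1$ for $i\ge 2$ and $B_j=2A_1$ for every $j$, where $A_i$ and $B_j$ denote the coefficients computed above.

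A short case analysis finishes the proof. Mixing a type-A choice at some $i$ with a type-B choice at some $i'$ would force $m_i+m_{i'}=-2$, which is impossible since $m_i,m_{i'}\ge 0$; hence all $i$-factors must be of the same type, forcing $m_1=\cdots=m_{r_1}$. The equation $B_j=2A_1$ then rules out exactly one of the four $j$-types (Type~4 when all $i$-factors are of type~A, Type~1 when all are of type~B), leaving the three alternatives $n_j\in\{2m_1-\bn_j,\,2m_1+2+\bn_j,\,\bn_j-2m_1-2\}$ declared in the statement. For each admissible $(m,n,\bn)$ the two global types of $i$-choices produce exactly two basis elements $\nu_0$, which, after identifying $J=\{j\mid n_j=\bn_j-2m_1-2\}$ and $\overline{J}=\{j\mid n_j=2m_1+2+\bn_j\}$ and collecting wedge factors, recover the two generators displayed in the lemma. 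I expect the main technical challenge to be the careful sign bookkeeping in the Leibniz computation of $\xi_{\nu_0}$---in particular for the Type-4 $j$-summand, where the two wedge factors $e^{-\widetilde{v}_j}\rho\,dz_j$ and $e^{-\widetilde{v}_j}\rho\,d\bz_j$ combine to contribute $-2\,d\widetilde{v}_j$---and the bookkeeping needed to match the index sets $J,\overline{J}$ with the wedge factors appearing in each of the two generators.
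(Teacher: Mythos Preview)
Your approach is correct and essentially the same as the paper's: both identify $\ker\eth_{S_\eta}$ with the kernel of $\widetilde{\eth}_{S_\eta}$ acting on the flat line bundles comprising $\ker\eth_\cC$, and both then determine which of these line bundles are trivial by the same coefficient computation and case analysis. The only cosmetic difference is that the paper first passes to the independent coordinates $\tu_2,\ldots,\tu_{r_1},\tv_1,\ldots,\tv_{r_2}$ and invokes the K\"unneth formula (yielding its formula \eqref{dec.5g}), whereas you compute $\xi_{\nu_0}$ in all $d\tu_i,d\tv_j$ and eliminate the linear relation afterward, phrasing the vanishing criterion in terms of holonomy; these are equivalent bookkeeping choices.
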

\begin{proof}
First, in the special case where $r_1=1$ and $r_2=0$, notice that $S_{\eta}$ is a point and the result is a direct consequence of Lemma~\ref{dec.5c}.  Otherwise, since we assume $r_1\ge 1$, we must have $\dim S_{\eta}\ge 1$ and by the discussion above, $\ker \eth_{S_{\eta}}$ corresponds to the kernel of $\widetilde{\eth}_{S_\eta}$ acting on 
$\Omega^*(S_{\eta};\ker\eth_{\cC})$.  In terms of the differential $\td_{S_{\eta}}$ and the induced metric, we see from Lemma~\ref{dec.5c} that $\ker \eth_{\cC}$ splits orthogonally into a direct sum of flat line bundles on $S_{\eta}$.  It thus suffices to compute the kernel of $\widetilde{\eth}_{S_{\eta}}$ when acting on sections of each of these flat line bundles.  To describe the corresponding differentials, it is convenient to use, instead of $u_2,\ldots, u_{r_1}, v_1,\ldots, v_{r_2}$, the local coordinates $\tu_2,\ldots, \tu_{r_1}, \tv_1,\ldots, \tv_{r_2}$ on $S_{\eta}$.  This is possible, since as readily checked, the corresponding linear change of coordinates has non-vanishing Jacobian.  In terms of these coordinates, notice that
$$
\mu =\frac{1}{d_{\bbK}}\left(\sum_{i=2}^{r_1}u_i+ 2\sum_{j=1}^{r_2} v_j \right)= \sum_{i=2}^{r_1}\tu_i +2 \sum_{j=1}^{r_2} \tv_j,
$$
so the differential $\td_{S_{\eta}}$ is given by 
$$
\td_{S_{\eta}} \hat{w}_{k,l}=\left[ \sum_{i=2}^{r_1} \frac{2\widetilde{k_i}-\widetilde{m}_i}2d\tu_i + \sum_{j=1}^{r_2}\frac{2(\widetilde{l}_j+\widetilde{\overline{l}}_j)-\widetilde{n}_j-\widetilde{\bn}_j}2 d\widetilde{v}_j \right] \hat{w}_{k,l}
$$ 
with $\widetilde{k}_i=k_i-k_1$, $\widetilde{m}_i=m_i-m_1$, $\widetilde{l}_j=l_j-k_1$, $\widetilde{\overline{l}}_j=\overline{l}_j-k_1$, $\widetilde{n}_j=n_j-m_1$ and $\widetilde{\bn}_j=\bn_j-m_1$, while
$$
 \td_{S_{\eta}}(e^{-\tu_1}\rho dx_1)= d\mu\wedge e^{-\tu_1}\rho dx_1= \left( \sum_{i=2}^{r_1}d\tu_i+ 2\sum_{j=1}^{r_2} d\tv_j\right)\wedge e^{-\tu_1}\rho dx_1
$$
and otherwise is described as in \eqref{dec.5f}.  Now, by Lemma~\ref{dec.5c}, assuming that $m\in \bbN^{r_1}$ and $n\in\bbN^{2r_2}$ for the moment, the flat line bundles of the decomposition of $\ker\eth_{\cC}$ over $S_{\eta}$ are spanned by sections of the form
$$
   \sigma= \hat{w}_{k,l} \left( \bigwedge_{i, k_i=0} e^{-\tu_i}\rho dx_i\right)\wedge \left( \bigwedge_{j,l_j=0} e^{-\tv_i}\rho dz_j\right)\wedge \left( \bigwedge_{j,\overline{l}_j=0}  e^{-\tv_i}\rho d\bz_j\right)
$$
for $k$ and $l$ such that $k_i\in\{0,m_i\}$ for all $i$ and $l_j\in\{0,n_j\}$, $\overline{l}_j\in\{0,\bn_j\}$ for all $j$.   In particular, 
we compute that 
\begin{multline}
  \td_{S_{\eta}}\sigma= \left[ \sum_{i=2}^{r_1} \left( \frac{2\widetilde{k}_i-\widetilde{m}_i}2-\delta_{k_i0}+\delta_{k_10} \right)d\tu_i \right. \\  \left.+ \sum_{j=1}^{r_2}\left( \frac{2(\widetilde{l}_j+\widetilde{\overline{l}}_j)-\widetilde{n}_j-\widetilde{\bn}_j}2+ 2\delta_{k_10}-\delta_{l_j0}-\delta_{\overline{l}_j0} \right)d\tv_j \right]\wedge \sigma.
\label{dec.5g}\end{multline}
Using the K\"unneth formula in terms of the decomposition induced by the coordinates $\tu_2,\ldots,\tu_{r_1}, \tv_1,\ldots,\tv_{r_2}$, we see that the cohomology is trivial unless $d\sigma=0$, that is, unless the line bundle is trivial as a flat vector bundle, in which case the cohomology is isomorphic to $\cH^*(S_{\eta})$.  A careful inspection of \eqref{dec.5g} then shows that the only way $d\sigma=0$ is if 
$$
    \sigma= w_{m,l}\left( \bigwedge_{j\in J} e^{-\tv_j}\rho dz_j\right)\wedge \left( \bigwedge_{j\in \overline{J}} e^{-\tv_j}\rho d\bz_j\right)       
$$
or
$$
\sigma= w_{0,n-l} \left(\bigwedge_{i=1}^{r_1} e^{-\tu_i}\rho dx_i\right)\wedge \left( \bigwedge_{j\notin J} e^{-\tv_j}\rho dz_j \right)\wedge \left( \bigwedge_{j\notin \overline{J}} e^{-\tv_j}\rho d\bz_j\right)
$$
with $m,n$,$l$, $J$ and $\overline{J}$ as in the statement of the lemma, yielding the result.  When we allow $m\in \bbN^{r_1}_0$ and $n\in\bbN^{r_2}_0$, the formula \eqref{dec.5g} must be modified appropriately, but again the same conclusion can be reached.
\end{proof}

When $r_1=0$, the kernel of $\eth_{S_{\eta}}$ can be computed as follows.
\begin{lemma}
If $r_1=0$, suppose that $\bn_1=\cdots \bn_{r_2}=0$ and assume without loss of generality that $n_1\ge \cdots\ge n_{r_2}\ge 0$.  If $n_1>0$, then the kernel of $\eth_{S_{\eta}}$ is trivial unless $n_j\in\{n_1, n_1-2\}$ for all $j$, in which case it is given by
\begin{equation}
\langle w_{0,n}(e^{-\tv}\rho d\bz)^{\alpha}, w_{0,0}\left( \bigwedge_{j=1}^{r_2} e^{-\tv_j}\rho dz_j \right)\wedge (e^{-\tv}\rho d\bz)^{\beta} \rangle \wedge \cH^*(S_{\eta}),
\label{dec.5k}\end{equation}
where $\alpha,\beta\in \{0,1\}^{r_2}$ are such that
$$
\begin{aligned}
      n_j=n_1 \;&\Longrightarrow \; \alpha_j=\alpha_1, \; \beta_j= \beta_1, \\
      n_j= n_1-2 & \Longrightarrow \; \alpha_j=0, \alpha_1=1, \beta_j=1, \beta_1=0.
\end{aligned}      
$$
In particular, if $n_j=n_1-2$ for some $j$, then $\alpha$ and $\beta$ are uniquely determined by these conditions, while otherwise $\alpha,\beta\in \{ (0,\ldots,0), (1,\ldots,1)\}$.  If instead $n_1=\cdots=n_{r_2}=0$, then the kernel is of the form
\begin{equation}
   \langle (\rho dz)^\alpha\wedge (\rho d\bz)^\beta \; | \; \alpha_j+\beta_j= \alpha_1+\beta_1 \; \forall j \rangle \wedge \cH^*(S_{\eta}).
\label{dec.5l}\end{equation}
with 
\begin{equation}
(e^{-\tv}\rho d\bz)^\beta=(e^{-\tv_1}\rho d\bz_1)^{\beta_1}\wedge\cdots \wedge (e^{\tv_{r_2}}\rho d\bz)^{\beta_{r_2}}_{r_2}
\label{dec.5i}\end{equation}
and similarly for $(e^{-\tv}\rho dz)^\alpha$.

\label{dec.5h}\end{lemma}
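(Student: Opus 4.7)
The plan is to adapt the proof of Lemma~\ref{dec.5e}. Since $\widetilde{\eth}_{S_\eta}$ and $\eth_\cC$ anticommute, $\eth_{S_\eta}^2 = \widetilde{\eth}_{S_\eta}^2 + \eth_\cC^2$, so $\ker \eth_{S_\eta} = \ker \widetilde{\eth}_{S_\eta} \cap \ker \eth_\cC$. By Lemma~\ref{dec.5c} applied with $r_1 = 0$ and $\bn_j = 0$, the kernel $\ker \eth_\cC$ decomposes as a direct sum of flat line bundles on $S_\eta$, each spanned by a tensor product $\sigma = \bigotimes_{j=1}^{r_2}\xi_j$ with
\[
\xi_j \in \big\{\hat{f}_{j,1}^{n_j},\; \hat{f}_{j,1}^{n_j}\, e^{-\tv_j}\rho\, d\bz_j,\; \hat{f}_{j,2}^{n_j}\, e^{-\tv_j}\rho\, dz_j,\; \hat{f}_{j,2}^{n_j}\, e^{-2\tv_j}\rho^2\, dz_j\wedge d\bz_j \big\}.
\]

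I would then encode the choice of $\xi_j$ by $\epsilon_j^{z}, \epsilon_j^{\bz} \in \{0,1\}$ indicating the presence of $dz_j$ and $d\bz_j$ (the power of $\hat{f}_{j,1}$ being $(1-\epsilon_j^z)n_j$ and that of $\hat{f}_{j,2}$ being $\epsilon_j^z n_j$), and apply the formulas in \eqref{dec.5f} to compute
\[
\td_{S_\eta}\sigma = \left( \sum_{j=1}^{r_2} c_j\, d\tv_j \right)\wedge \sigma, \qquad c_j := \tfrac{n_j}{2} - (n_j+1)\epsilon_j^z - \epsilon_j^{\bz}.
\]
Since the convention $v_1 = 0$ when $r_1 = 0$ yields $\sum_{j=1}^{r_2}\tv_j = 0$ on $S_\eta\cong \bbT^{r_2-1}$ and hence $d\tv_1 = -\sum_{j\geq 2} d\tv_j$, vanishing of this expression is equivalent to $c_j = c_1$ for every $j \geq 2$. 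On a line bundle for which this holds, the flat connection is globally trivial, and the contribution to the kernel is $\sigma\wedge \cH^*(S_\eta)$, where $\cH^*(S_\eta)$ is the exterior algebra generated by $d\tv_2,\ldots,d\tv_{r_2}$.

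The last step, and in my view the main source of difficulty, is the combinatorial enumeration of the configurations satisfying $c_j = c_1$. When $n_1 > 0$, I would first observe that $c_j \in [n_j/2 - 1, n_j/2] \subseteq [-1,\infty)$ if $\epsilon_j^z = 0$, while $c_j \in [-n_j/2 - 2, -n_j/2 - 1] \subseteq (-\infty, -1]$ if $\epsilon_j^z = 1$, with equality in both bounds requiring $n_j = 0$. Applied at $j = 1$, where $n_1 > 0$ forces a strict inequality, this rules out any mixing of $\epsilon^z$ values across different factors. Only the two pure families survive, parametrized respectively by $\alpha := \epsilon^{\bz}$ (for pure $\epsilon^z \equiv 0$) and $\beta := \epsilon^{\bz}$ (for pure $\epsilon^z \equiv 1$); inside each, solving $c_j = c_1$ together with $n_j \leq n_1$ forces $n_j \in \{n_1, n_1 - 2\}$ with the stated compatibility constraints on $\alpha$ or $\beta$, giving \eqref{dec.5k}. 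When $n_1 = 0$, every $n_j = 0$, so $c_j = -\epsilon_j^z - \epsilon_j^{\bz}$ and the constraint reduces to $\alpha_j + \beta_j$ being independent of $j$ (with $\alpha_j := \epsilon_j^z$, $\beta_j := \epsilon_j^{\bz}$), yielding \eqref{dec.5l}.
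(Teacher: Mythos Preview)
Your proof is correct and follows essentially the same strategy as the paper's: reduce to line bundles in $\ker\eth_{\cC}$ via Lemma~\ref{dec.5c}, compute the connection form of $\td_{S_\eta}$ on each, and enumerate the trivial ones. The only cosmetic difference is that the paper passes to the independent coordinates $\tv_2,\ldots,\tv_{r_2}$ and writes the coefficient of $d\tv_j$ as in \eqref{dec.5j}, whereas you keep all the $d\tv_j$ subject to $\sum_j d\tv_j=0$ and reduce to the single symmetric condition $c_j=c_1$; your sign-dichotomy argument for ruling out mixed values of $\epsilon_j^z$ is a clean way to organize the casework that the paper leaves implicit.
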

 \begin{proof}
 If $r_2=1$, then $S_{\eta}$ is a point and the result is a direct consequence of Lemma~\ref{dec.5c}.  Otherwise, we can still follow the strategy of the proof of Lemma~\ref{dec.5e}.  We will only highlight the changes needed.  First, we can use the local coordinates $\tv_2,\ldots, \tv_{r_2}$ on $S_{\eta}$ instead of $v_2,\ldots, v_{r_2}$.  In terms of these coordinates,
 $$
    \mu= \frac{2}{d_{\bbK}}\sum_{j=2}^{r_2}v_j= \sum_{j=2}^{r_2}\tv_j
 $$
 and the differential $\td_{S_{\eta}}$ takes the form 
 $$
       \td_{S_{\eta}} \hat{w}_{0,l}= \left[ \sum_{j=2}^{r_2} \lrp{\frac{2\widetilde{l}_j-\widetilde{n}_j}2}d\tv_j  \right] \hat{w}_{0,l}
 $$
 with now $\widetilde{l}_j=l_j-l_1$ and $\widetilde{n}_j=n_j-n_1$, while 
 $$
 \begin{aligned}
 \td_{S_{\eta}} (e^{-\tv_1}\rho dz_1)&= d\mu\wedge e^{-\tv_1}\rho dz_1= \lrp{\sum_{j=2}^{r_2}d\tv_j}\wedge e^{-\tv_1}\rho dz_1, \\
 \td_{S_{\eta}} (e^{-\tv_1}\rho d\bz_1)&= d\mu\wedge e^{-\tv_1}\rho d\bz_1= \lrp{\sum_{j=2}^{r_2}d\tv_j}\wedge e^{-\tv_1}\rho d\bz_1,
 \end{aligned}
 $$
 and otherwise is as in \eqref{dec.5f}.  Now, $\ker\eth_{\cC}$ still splits into flat line bundles.   Assuming that $n\in \bbN^{r_2}$, it is spanned by sections of the form
 $$
    \sigma= \hat{w}_{0,l}\lrp{\bigwedge_{j, l_j=0}e^{-\tv_j}\rho dz_j}\wedge (e^{-\tv}\rho d\bz)^{\beta}
 $$
 for $l$ such that $l_j\in\{0,n_j\}$ for all $j$ and with $d\bz^{\beta}$ as in \eqref{dec.5i}.  We compute in this case that 
 \begin{equation}
 \td_{S_{\eta}} \sigma= \left[ \sum_{j=2}^{r_2} \lrp{\frac{2\widetilde{l}_j-\widetilde{n}_j}2 +\delta_{l_10} -\delta_{l_j0}+\delta_{\beta_11}-\delta_{\beta_j1}   }d\tv_j  \right] \wedge \sigma.
\label{dec.5j}\end{equation}
Again, to have non-trivial kernel, we must have that $\td_S\sigma=0$.  Looking at \eqref{dec.5j}, we see that the kernel must be of the claimed form \eqref{dec.5k}.  If more generally $n\in\bbN_0^{r_2}$, formula \eqref{dec.5j} must be suitably modified, but again the same conclusion can be reached, except in the case where $n_1=0$, that is, when $n=(0,\ldots,0)$ and  $E_{0,n}$ is a trivial flat line bundle, in which case the kernel is instead described by \eqref{dec.5l}.
 \end{proof}
 \begin{remark}
 The assumption that $\bn_1=\cdots=\bn_{r_2}=0$ in Lemma~\ref{dec.5h} can be removed, but then the result has a more complicated description.
 \label{oc.1}\end{remark}
 
 These results can be used to study the operator $D_{b,\eta}$.  To see this, we need the following lemma.
 \begin{lemma}
 The operator $\eth_{S_{\eta}}$ commutes with the weight operator $W$.  
 \label{dec.5m}\end{lemma}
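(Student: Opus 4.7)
The plan is to reduce the claim to a commutation statement for the differential $d_{S_{\eta}}=\widetilde{d}_{S_{\eta}}+d_{\cC}$ and then pass to its formal adjoint. Since $\eth_{S_{\eta}}=d_{S_{\eta}}+d_{S_{\eta}}^{*}$, it suffices to show that $[d_{S_{\eta}},W]=0$ and that $W$ is self-adjoint with respect to the inner product induced by $g_{\fc,\eps}$ and $h_{\eps}$; the latter fact, combined with the commutation on $d_{S_{\eta}}$, will automatically give $[d_{S_{\eta}}^{*},W]=0$ by taking adjoints, and hence $[\eth_{S_{\eta}},W]=0$.

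The self-adjointness of $W$ is essentially by inspection: by \eqref{w.1} and \eqref{w.2}, $W$ acts diagonally with integer eigenvalues on the orthonormal basis of $\cC^{*}$ given by wedges of $e^{-\tu_i}\rho dx_i$, $e^{-\tv_j}\rho dz_j$, $e^{-\tv_j}\rho d\bz_j$ with the orthonormal sections $\hat{w}_{k,l}$, and with respect to this basis the different weight spaces are mutually orthogonal.

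The commutator $[d_{S_{\eta}},W]=0$ will be checked by comparing weights on each generator. For $d_{\cC}$ applied to $\hat{w}_{k,l}$, each term in \eqref{dc.1} replaces $\hat{w}_{k,l}$ (of $W$-weight $\tfrac{|m|+|n|}{2}-|k|-|l|$) by a section $\hat{w}_{k+1_i,l}$ (or with $l$ shifted), of weight one lower, wedged with a factor $e^{-\tu_i}\rho dx_i$, $e^{-\tv_j}\rho dz_j$ or $e^{-\tv_j}\rho d\bz_j$, each of $W$-weight $1$. Hence $d_{\cC}$ preserves the total $W$-weight. For $\widetilde{d}_{S_{\eta}}$, the formulas in \eqref{dec.5f} show that each term is either $d\widetilde{u}_{i}$ or $d\widetilde{v}_{j}$ wedged with the original section; these base one-forms are of $W$-weight zero by \eqref{w.1}, so weight is again preserved. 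Extending to arbitrary elements of $\Omega^{*}(S_{\eta};\ker\eth_{Y_{\eta}/S_{\eta}})$ using the derivation property, we conclude $[d_{S_{\eta}},W]=0$.

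The main (and only) subtlety is the bookkeeping of weights for $d_{\cC}$: one must remember that the factor $\hat{w}_{k+1_i,l}$ has strictly smaller $W$-weight than $\hat{w}_{k,l}$, and that this drop is exactly compensated by the $+1$ weight carried by the fiber one-form $e^{-\tu_i}\rho dx_i$ (and analogously for the $z_j,\bz_j$ directions). Once this balance is verified, the argument is purely algebraic and the lemma follows.
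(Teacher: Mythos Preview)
Your proof is correct and follows essentially the same approach as the paper: check that $d_{\cC}$ and $\widetilde{d}_{S_{\eta}}$ each commute with $W$, observe that $W$ is self-adjoint, and pass to the adjoints to conclude for $\eth_{S_{\eta}}$. You have simply spelled out in more detail the weight bookkeeping that the paper leaves implicit.
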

 \begin{proof}
 From their definitions, the differential $d_{\cC}$ and $\td_{S_{\eta}}$ clearly commute with the weight operator $W$.  Since $W$ is self-adjoint, this means that $d^*_{\cC}$ and $\td^*_{S_{\eta}}$ also commute with $W$.  Hence, so do $\eth_{\cC}$, $\widetilde{\eth}_{S_{\eta}}$ and $\eth_{S_{\eta}}=\eth_{\cC}+\widetilde{\eth}_{S_{\eta}}$.  
 
 \end{proof}
 Using Lemma~\ref{dec.5m}, we see that the operator $D_{b,\eta}$ of \eqref{dec.3} is such that
 \begin{equation}
\begin{aligned}
D_{b,\eta}^2 &= \lrp{\begin{array}{cc} \eth_{S_{\eta}}^2- D(\frac{d_{\bbK}}2-W)D(W-\frac{d_\bbK}2) & 0 \\ 0 & \eth_{S_{\eta}}^2- D(W-\frac{d_{\bbK}}2)D(\frac{d_{\bbK}}2-W) \end{array}} \\
&=\lrp{\begin{array}{cc} \eth_{S_{\eta}}^2+ D(W-\frac{d_{\bbK}}2)^*D(W-\frac{d_{\bbK}}2) & 0 \\ 0 & \eth_{S_{\eta}}^2+D(\frac{d_{\bbK}}2-W)^*D(\frac{d_{\bbK}}2-W) \end{array}}.
\end{aligned}
\label{dec.8}\end{equation}
Hence, as in \cite{MR1}, $D_{b,\eta}$ can possibly be non-Fredholm or have a non-trivial $L^2$-kernel only if $\eth_{S_{\eta}}$ has a non-trivial kernel.  
\begin{proposition}
The operators $D_{b,\eta}$ and $D_{b,\eta}^2$ are Fredholm as $b$-operators for the $b$-density $\frac{dX}{\langle X\rangle}$ for $X\in\bbR$ if $r_1>0$ or if $r_1=0$, $n\ne 0$ and $\bn_1=\cdots=\bn_2=0$.  When $r_1> 0$ the $L^2$-kernel of $D_{b,\eta}$ (and $D_{b,\eta}^2$) is trivial unless $m_1=\cdots=m_{r_1}$ and $$n_j\in\{2m_1-\bn_j,2m_1+2+\bn_j, \bn_j-2m_1-2\}$$ for all $j\in \{1,\ldots,r_2\}$, in which case it is given by 
\begin{multline}
   \langle w_{m,l}\left( \bigwedge_{j\in J} e^{-\tv_j}\rho d z_j\right)\wedge \left( \bigwedge_{j\in \overline{J}} e^{-\tv_j}\rho d\bz_j\right)\wedge\langle X\rangle^{-\frac{d_{\bbK}}2-\frac{|m|-|n|}2-|l|+|J|+|\overline{J}|}\frac{dX}{\langle X\rangle}, \\ w_{0,n-l} \left(\bigwedge_{i=1}^{r_1} e^{-\tu_i}\rho dx_i\right)\wedge \left( \bigwedge_{j\notin J} e^{-\tv_j}\rho dz_j \right)\wedge \left( \bigwedge_{j\notin \overline{J}} e^{-\tv_j}\rho d\bz_j\right) \langle X\rangle^{-\frac{d_{\bbK}}2-\frac{|m|+|n|}2+|l|+|J|+|\overline{J}|} \rangle\wedge \cH^*(S_{\eta})
\end{multline}
with $J, \overline{J}$ and $l$ as in Lemma~\ref{dec.5e}.  If instead $r_1=0$, but $\bn_1=\cdots=\bn_{r_2}=0$, $n_1\ge 1$ and we assume without loss of generality that $n_1\ge\cdots\ge n_{r_2}$, then the $L^2$-kernel of $D_{b,\eta}$ is trivial unless $n_{j}\in\{n_1, n_1-2\}$ for all $j$, in which case it is given by 
\begin{equation}
\langle w_{0,n}(e^{-\tv}\rho d\bz)^{\alpha}\wedge \langle X\rangle^{-\frac{d_{\bbK}}2-\frac{|n|}2+|\alpha|}\frac{dX}{\langle X \rangle}, w_{0,0}\left( \bigwedge_{j=1}^{r_2} e^{-\tv_j}\rho dz_j \right)\wedge (e^{-\tv}\rho d\bz)^{\beta}\langle X\rangle^{-\frac{|n|}2-|\beta|} \rangle \wedge \cH^*(S_{\eta}),
\label{dec.8e}\end{equation}
with $\alpha,\beta\in \{0,1\}^{r_2}$ such that
$$
\begin{aligned}
      n_j=n_1 \;&\Longrightarrow \; \alpha_j=\alpha_1, \; \beta_j= \beta_1, \\
      n_j= n_1-2 & \Longrightarrow \; \alpha_j=0, \alpha_1=1, \beta_j=1, \beta_1=0.
\end{aligned}      
$$
\label{dec.8d}\end{proposition}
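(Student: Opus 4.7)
My plan is to leverage the block‐diagonal factorisation \eqref{dec.8} together with Lemma~\ref{dec.5m} and the explicit descriptions of $\ker\eth_{S_\eta}$ in Lemmas~\ref{dec.5e} and~\ref{dec.5h}. Because $W$ commutes with $\eth_{S_\eta}$, the operators $\widetilde{\eth}_{S_\eta}^2$, $\eth_{\cC}^2$ and $D(\pm(W-\tfrac{d_{\bbK}}{2}))^*D(\pm(W-\tfrac{d_{\bbK}}{2}))$ all mutually commute, so $D_{b,\eta}^2$ can be simultaneously diagonalised on joint eigenspaces of $\eth_{S_\eta}$ and $W$. Writing a form as $\omega=\omega_0+\tfrac{dx}{\rho}\wedge\omega_1$ in accordance with \eqref{dec.1}, the equation $D_{b,\eta}^2\omega=0$ then splits into an $\omega_0$–equation $(\eth_{S_\eta}^2+D(W-\tfrac{d_{\bbK}}{2})^*D(W-\tfrac{d_{\bbK}}{2}))\omega_0=0$ and an $\omega_1$–equation with the opposite sign of $W-d_{\bbK}/2$. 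Since both summands are non-negative and commute, an element of the kernel must lie in the intersection $\ker\eth_{S_\eta}\cap\ker D(\pm(W-\tfrac{d_{\bbK}}{2}))$.

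For the Fredholm statement, I would invoke the standard criterion that a $b$-operator is Fredholm on $L^2$ with respect to the $b$-density $\tfrac{dX}{\langle X\rangle}$ iff its indicial family at $X=\pm\infty$ is invertible at the critical weight $s=0$. Freezing coefficients in \eqref{dec.3} at $X\to\pm\infty$ (using $X\partial_X\to\partial_t$ under $t=\log|X|$) shows that this indicial family reduces on joint eigenspaces of $(\eth_{S_\eta},W)$ to the scalar operator with symbol $\eth_{S_\eta}^2+\xi^2+(W-\tfrac{d_{\bbK}}{2})^2$. Thus Fredholmness is equivalent to demanding that no element of $\ker\eth_{S_\eta}$ has weight exactly $d_{\bbK}/2$. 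Using the explicit form of $\ker\eth_{S_\eta}$ given by Lemmas~\ref{dec.5e} and~\ref{dec.5h}, one tabulates the weights of the generators: the Type A generator based on $w_{m,l}$ has weight $w_A=-\tfrac{|m|}{2}+\tfrac{|n|}{2}-|l|+|J|+|\bar J|$ and its Type B partner based on $w_{0,n-l}$ has weight $w_B=d_{\bbK}-w_A$, and under the hypothesis ($r_1>0$, or $r_1=0$ together with $n\neq 0$ and $\bar n=0$) one checks directly from the integrality constraints on $|l|,|J|,|\bar J|$ that $w_A\neq d_{\bbK}/2$. The case $r_1=0$ with $n=0$ is excluded precisely because then \eqref{dec.5l} contains middle-weight forms $w_A=r_2=d_{\bbK}/2$, breaking invertibility.

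To extract the $L^2$-kernel, I would solve the ODE $D(a)u=0$ explicitly: since $D(a)=\langle X\rangle^{-a}\langle X\rangle\partial_X\langle X\rangle^a$, the general solution is $u=C\langle X\rangle^{-a}$, and this lies in $L^2(\bbR_X,\tfrac{dX}{\langle X\rangle})$ precisely when $a>0$. Consequently, given $\sigma\in\ker\eth_{S_\eta}$ of weight $w$, the kernel element $\sigma\otimes\langle X\rangle^{-(w-d_{\bbK}/2)}$ sits in the $\omega_0$-slot provided $w>d_{\bbK}/2$, while $\sigma\otimes\langle X\rangle^{-(d_{\bbK}/2-w)}\tfrac{dX}{\langle X\rangle}$ sits in the $\omega_1$-slot provided $w<d_{\bbK}/2$. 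Since $w_A+w_B=d_{\bbK}$, exactly one of the Type A and Type B generators from Lemma~\ref{dec.5e} satisfies each inequality, and substituting $w_A$ and $w_B$ produces the exponents $-\tfrac{d_{\bbK}}{2}-\tfrac{|m|-|n|}{2}-|l|+|J|+|\bar J|$ and $-\tfrac{d_{\bbK}}{2}-\tfrac{|m|+|n|}{2}+|l|+|J|+|\bar J|$ appearing in the statement. The $r_1=0$ case is handled identically, using the generators in \eqref{dec.5k} whose weights are $|\alpha|-|n|/2$ and $|n|/2+r_2+|\beta|$, sitting symmetrically about $d_{\bbK}/2=r_2$ and producing the claimed exponents $-\tfrac{d_{\bbK}}{2}-\tfrac{|n|}{2}+|\alpha|$ and $-\tfrac{|n|}{2}-|\beta|$.

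The main obstacle in carrying this out rigorously is the bookkeeping with the weight operator $W$ and the factors $\bigwedge e^{-\tu_i}\rho dx_i$, $\bigwedge e^{-\tv_j}\rho dz_j$ which each shift the $W$-eigenvalue by $+1$: one must track these contributions carefully to produce the precise power of $\langle X\rangle$, and simultaneously verify that the degeneracy condition $w=d_{\bbK}/2$ is ruled out by the hypotheses. A secondary technical point is justifying that once the indicial family is invertible at the critical weight, the $L^2$-kernel is exhausted by the explicit exponential solutions of the ODE $D(a)u=0$ rather than containing genuine spectrum from $\widetilde{\eth}_{S_\eta}$; this follows because on the orthogonal complement of $\ker\eth_{S_\eta}$ the operator $\eth_{S_\eta}^2$ is bounded below by a positive constant, hence contributes no kernel to $D_{b,\eta}^2$.
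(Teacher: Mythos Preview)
Your proposal is correct and follows essentially the same approach as the paper: compute the indicial family of $D_{b,\eta}^2$ (which the paper writes as $\eth_{S_\eta}^2+(W-\tfrac{d_{\bbK}}{2})^2+\lambda^2$ via $I(D(a),\lambda)=\pm(a-i\lambda)$), observe that invertibility for all real $\lambda$ reduces to $W\neq\tfrac{d_{\bbK}}{2}$ on $\ker\eth_{S_\eta}$, and then read off the $L^2$-kernel from the fact that $D(a)$ has $L^2$-kernel spanned by $\langle X\rangle^{-a}$ iff $a>0$, combined with Lemmas~\ref{dec.5e} and~\ref{dec.5h}. The only minor imprecision is your phrasing ``invertible at the critical weight $s=0$'': the $b$-Fredholm criterion (Melrose) requires invertibility of the indicial family for \emph{all} $\lambda\in\bbR$, though of course the $\lambda^2$ term makes $\lambda=0$ the only case in doubt, so this is harmless here.
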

 \begin{proof}
 By \cite{MelroseAPS}, it suffices to check that the indicial family of $I(D^2_{b,\eta},\lambda)$ is invertible for all $\lambda \in \bbR$.  As shown in \cite[(2.12)]{ARS2}, 
 $$
    I(D(a),\lambda)= \pm(a-i\lambda)
 $$
 at $X=\pm \infty$, hence 
 $$
  I(D_{b,\eta}^2,\lambda)= \lrp{\begin{array}{cc} \eth_{S_{\eta}}^2 + (W-\frac{d_{\bbK}}2)^2+\lambda^2 & 0 \\ 0 & \eth_{S_{\eta}}^2 + (W-\frac{d_{\bbK}}2)^2+ \lambda^2 \end{array}}
 $$
 at both ends.  Clearly, this is invertible for $\lambda\ne 0$, while at $\lambda=0$, it is invertible provided $W-\frac{d_{\bbK}}2\ne 0$ when acting on $\ker\eth_{S_{\eta}}$.  By the explicit description of $\ker\eth_{S_{\eta}}$ given in Lemmas~\ref{dec.5e} and \ref{dec.5h} and the definition of $W$ in \eqref{w.1} and \eqref{w.2}, this is indeed the case unless $r_1=0$ and $n=0$.   To describe the $L^2$-kernel of $D_{b,\eta}$ and $D_{b,\eta}^2$, notice from \eqref{dec.8} that it corresponds to the kernel of 
 $$
 \lrp{ \begin{array}{cc} 0 & -D(\frac{d_{\bbK}}2-W) \\D(W-\frac{d_{\bbK}}2) & 0  \end{array}   }
 $$ 
 acting on sections of the trivial vector bundle $\ker \eth_{S_{\eta}}\oplus \ker \eth_{S_{\eta}}$ over $\bbR$.  Since the $L^2$-kernel of $D(a)$ is non-trivial and spanned by $\langle X \rangle^{-a}$ if and only if $a>0$, the description of the $L^2$-kernel of $D_{b,\eta}$ and $D_{b,\eta}^2$ therefore follows from Lemmas~\ref{dec.5e} and \ref{dec.5h} and the definition of $W$.
 
 \end{proof}
 
 When $D_{b,\eta}$ is Fredholm, we can apply the uniform construction of the resolvent of \cite[Theorem~4.5]{ARS1} to $\eth_{\fc,\epsilon}$ as $\epsilon\searrow 0$.
 \begin{theorem}
 Suppose that either $r_1\ne 0$ or $r_1=0$ with $\bn_1=\cdots=\bn_{r_2}=0$ and $n\ne 0$.  Then the family of operators $D_{\fc,\epsilon}:=\rho^{\frac{d_{\bbK}}2}\eth_{\fc,\epsilon}\rho^{-\frac{d_{\bbK}}2}$ (using $b$-densities) has finitely many small eigenvalues, that is, there are finitely many eigenvalues of $D_{\fc,\epsilon}$ tending to $0$ as $\epsilon\searrow 0$.  Furthermore, the projection $\Pi_{\operatorname{small}}$ on the eigenspace of small eigenvalues is a polyhomogeneous operator of order $-\infty$ in the surgery calculus of \cite{mame1} and 
 \begin{equation}
 \rank \Pi_{\operatorname{small}}= 2 \dim\ker_{L^2} \eth_{\fc}+ \sum_{\eta\in\mathfrak{P}_{\Gamma}}\dim\ker_{L^2_b} D_{b,\eta}.
 \label{ucr.1a}\end{equation}
 \label{ucr.1}\end{theorem}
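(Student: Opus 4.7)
The plan is to invoke the uniform resolvent construction of \cite[Theorem~4.5]{ARS1} verbatim, once we have verified its hypotheses for the family $\eth_{\fc,\epsilon}$. The hypotheses of that theorem concern the model operators at each boundary hypersurface of the surgery space: namely, the vertical family $D_{v,\eta}$ must have fibrewise cohomology forming a smooth vector bundle over $S_{\eta}$, and the associated horizontal $b$-operator $D_{b,\eta}$ must be fully elliptic (i.e., Fredholm as a $b$-operator). The first condition is immediate from Lemma~\ref{dec.5c}, which identifies $\ker \eth_{\cC}$ explicitly as a direct sum of flat line bundles over $S_{\eta}$, so that the fibrewise kernel of $D_{v,\eta} = \eth_{Y_\eta/S_\eta} \oplus (-\eth_{Y_\eta/S_\eta})$ has constant rank. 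The second condition is precisely the content of Proposition~\ref{dec.8d}, which is why the standing hypothesis of the theorem is exactly that of that proposition.

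Having verified the hypotheses, I would quote \cite[Theorem~4.5]{ARS1} (or rather the adapted version proved in \cite{MR1} incorporating the matsushima-Murakami bundle metric) to conclude that the resolvent $(\eth_{\fc,\epsilon}^{2}-\lambda)^{-1}$ admits a polyhomogeneous uniform construction on the surgery double space as $\epsilon \searrow 0$, with well-controlled index sets at the boundary faces $\bhs{sb}$, $\bhs{sm}$, $\bhs{mf}$ and $\bhs{phibf}$. Finiteness of the number of small eigenvalues then follows from standard Fredholm theory: one chooses a contour $|\lambda|=\delta$ in the resolvent set which avoids the spectrum of both $D_{b,\eta}^{2}$ and $\eth_{\fc}^{2}$ outside $\lambda=0$, and for $\epsilon$ small enough this contour lies in the resolvent set of $\eth_{\fc,\epsilon}^{2}$ except for finitely many eigenvalues inside it. Integrating the resolvent along this contour yields $\Pi_{\operatorname{small}}$, and by the uniform construction $\Pi_{\operatorname{small}}$ is polyhomogeneous of order $-\infty$ in the surgery calculus.

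The more interesting point is the rank formula \eqref{ucr.1a}. To compute it, I would restrict $\Pi_{\operatorname{small}}$ to each of the boundary faces $\bhs{sb}$ and $\bhs{sm}$ of the surgery double space and identify the resulting model operators. At the face $\bhs{sm}$, the model is the projection onto $L^2$-kernels on each copy of $X$ in $M = \bX \cup_{\pa\bX} \bX$, contributing $2\dim \ker_{L^2}\eth_{\fc}$. At the face $\bhs{sb}$ (which fibers over $\pa\bX = \sqcup_{\eta} Y_{\eta}$), the model is the projection onto the $L^2_b$-kernel of $D_{b,\eta}$, contributing $\sum_{\eta}\dim\ker_{L^2_b}D_{b,\eta}$. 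Because the kernels appearing in $\bhs{sm}$ and $\bhs{sb}$ are supported on disjoint regions asymptotically as $\epsilon\searrow 0$, these two contributions add without overlap, giving the claimed formula. This orthogonality-of-supports argument is the content of the proof of the analogous statement in \cite[Theorem~4.5]{ARS1} and \cite{MR1}.

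The main potential obstacle is not in the surgery construction itself, which is imported from \cite{ARS1}, but in ensuring that the mild differences in the present setting — namely the use of the Matsushima-Murakami metric $h$ on $E_{m,n}$ (which is not smooth up to the compactification) and the exact model form \eqref{model.1} of $g_{\fc,\epsilon}$ with trivial curvature and second fundamental form — are compatible with the hypotheses and proof of \cite[Theorem~4.5]{ARS1}. The adaptation to non-smooth bundle metrics was carried out in \cite{MR1} in the hyperbolic case, and the computations in \S~\ref{gfce.0} and \S~\ref{hdR.0} above, culminating in the explicit block form \eqref{dec.2} of $\eth_{\fc,\epsilon}$, show that the Bismut-Lott-type connection term vanishes, so the same adaptation applies without modification in our setting.
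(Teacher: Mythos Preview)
Your proposal is correct and follows the same approach as the paper: verify Assumptions~1 and~2 of \cite[Theorem~4.5]{ARS1} (via the vector bundle property of $\ker D_{v,\eta}$ and Proposition~\ref{dec.8d} respectively), then invoke that theorem together with \cite[Corollary~5.2]{ARS1} and the adaptation in \cite{MR1}. One minor correction: the constant-rank property of $\ker D_{v,\eta}$ does not follow from Lemma~\ref{dec.5c} (which computes $\ker\eth_{\cC}$, a different operator) but rather from the observation, made in the paper just before that lemma, that $\eth_{Y_\eta/S_\eta}$ is fibrewise the Hodge--deRham operator on a flat torus with a trivial flat bundle, whose kernel therefore has fixed dimension.
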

 \begin{proof}
 As already noticed, $\ker D_{v,\eta}$ is a vector bundle over $S_{\eta}$, which ensures that Assumption~1 of \cite[Theorem~4.5]{ARS1} holds, while Assumption~2 of this theorem holds thanks to Proposition~\ref{dec.8d}.  Hence, the result follows from \cite[Theorem~4.5 and Corollary 5.2]{ARS1}, \cf the proof of \cite[Theorem~3.5]{MR1}.
 \end{proof}

 \section{Cusp degeneration of analytic torsion} \label{cdat.0}
 
Let $T(M,\hE_{m,n},g_{\fc,\epsilon},h_{\epsilon})$ be the analytic torsion of $(M,\hE_{m,n},g_{\fc,\epsilon},h_{\epsilon})$.  In this section, we will study the limiting behavior of $T(M;\hE_{m,n}, g_{\fc,\epsilon},h_{\epsilon})$ as $\epsilon \searrow 0$.  To do so, we need first to give a better description of the small eigenvalues occurring in Theorem~\ref{ucr.1}.   First, on the fibered cusp end $(T,\infty)_r\times Y_{\eta}$, we can compute $L^2$-cohomology as follows.  As in \cite{MR1}, we can use polyhomogeneous forms, so the $L^2$-cohomology can be computed using the complex
\begin{multline}
  L^2\cA'_{\phg}\Omega^q((T,\infty)_r\times Y_{\eta};E_{m,n})= \\ \{\nu \in L^2\cA_{\phg}\Omega^q((T,\infty)_r\times Y_{\eta};E_{m,n})\; | \; d\nu\in L^2\cA_{\phg}\Omega^{q+1}((T,\infty)_r\times Y_{\eta};E_{m,n}) \},
\label{se.1}\end{multline}
where $L^2\cA_{\phg}\Omega^q((T,\infty)_r\times Y_{\eta};E_{m,n})$ is the space of smooth $L^2$-forms admitting a polyhomogeneous expansion in $x=\frac1r$ in the sense of \cite{MelroseAPS}.
The differential of this complex decomposes as
$$
     d= d_r + d_{Y_{\eta}}
$$ 
with $d_r$ the differential in the $(T,\infty)_r$ factor and $d_{Y_{\eta}}$ the differential in the $Y_{\eta}$ factor.  Using that the natural connection of the fibered bundle $\phi_{\eta}: Y_{\eta}\to S_{\eta}$ is flat, we see that the differential $d_{Y_{\eta}}$ further decomposes as 
$$
     d_{Y_{\eta}}= d_{\cC} +\td_{S_{\eta}}+d_{Y_{\eta}/S_{\eta}}
$$  
where $d_{\cC}$ is the differential of the complex \eqref{dec.4} with $\rho=1$, $d_{Y_{\eta}/S_{\eta}}$ is corresponding to the vertical differential of the smooth coefficients in front of the basis generating the complex $\cC$ and $\td_{S_{\eta}}$ is the remaining horizontal differential.  Using this decomposition, we can in particular induce a double complex out of \eqref{se.1} with  differentials $d_A:=d_{Y_{\eta}/S_{\eta}}$ and $d_B:= d_r+\td_{S_{\eta}}+d_{\cC}$ with bi-degree given by $(W, N_r+N_{S_{\eta}}+N_{Y_{\eta}/S_{\eta}}-W)$, where $W$ is the weight operator defined earlier on and $N_r$, $N_{S_{\eta}}$ and $N_{Y_{\eta}/S_{\eta}}$ are the number operators in the factors $(T,\infty)_r$ the base an the fibers of the fiber bundle $Y_{\eta}\to S_{\eta}$.  The first page of the corresponding spectral sequence is 
$$
E_1= L^2\cA'_{\phg}\Omega^*((T,\infty)_r\times S_{\eta}; \ker D_{v,\eta})
$$
with $d_1=d_B=d_r+\td_{S_{\eta}}+d_\cC$.  In fact, this spectral sequence degenerates at the second page, which is just the cohomology of $(E_1,d_1)$.  We can again decompose this differential by
$$
    d_1= d_{1,A}+ d_{1,B} \quad \mbox{with} \; d_{1,A}= \td_{S_{\eta}}+d_{\cC} \quad \mbox{and} \quad d_{1,B}=d_r,
$$
inducing on $(E_1,d_1)$ a structure of double complex with bi-degree $(N_{S_{\eta}}+N_{Y_{\eta}/S_{\eta}},N_r)$.  The corresponding spectral sequence has first page 
$$
   E_1'= L^2\cA_{\phg}'\Omega((T,\infty)_r, \ker\eth_{S_{\eta}})
$$
with differential $d_1'=d_{1,B}=d_r$, where a basis for $\ker\eth_{S_{\eta}}$ is given by Lemmas~\ref{dec.5e} and \ref{dec.5h}.
This spectral sequence degenerates at the second page $E_1''$ so that the $L^2$-cohomology of the fibered cusp end is identified with $E_1''$, that is, with the cohomology of the complex
$$
      L^2\cA_{\phg}'\Omega^*((T,\infty)_r;\ker\eth_{S_{\eta}})
$$
with differential $d_1'=d_r$.  By \cite[p.501]{HHM}, we deduce the following.

\begin{proposition}
When $r_1>0$, the $L^2$-cohomology of $((T,\infty)_r\times Y_{\eta}; E_{m,n})$ is trivial unless $m_1=\cdots=m_{r_1}$ and $n_j\in\{2m_1-\bn_j,2m_1+2+\bn_j, \bn_j-2m_1-2\}$ for all $j\in \{1,\ldots, r_2\}$, in which case it is given by 
$$
  H^*_{(2)}((T,\infty)_r\times Y_{\eta};E_{m,n})\cong \langle \bw_{m,l}\lrp{\bigwedge_{j\in J}e^{-\tv_j}dz_j}\wedge \lrp{\bigwedge_{j\in \overline{J}}e^{-\tv_j}d\bz_j}\rangle\wedge \cH^*(S_{\eta})
$$
with $J, \overline{J}$ and $l$ as in Lemma~\ref{dec.5e} and $\bw_{k,l}$ is $w_{k,l}$ with $r=1$.  If instead $r_1=0$, but $\bn_1=\cdots=\bn_{r_2}=0$ and $n_1\ge 1$, then  assuming without loss of generality that $n_1\ge \cdots \ge n_{r_2}$, the $L^2$-cohomology of $((T,\infty)_r\times Y, E_{m,n})$ is trivial unless $n_j\in \{n_1, n_1-2\}$ for all $j$, in which case it is given by 
$$
    H^*_{(2)}((T,\infty)_r\times Y_{\eta};E_{m,n})\cong \langle \bw_{0,n} (e^{-\tv}d\bz)^{\alpha}\rangle \wedge \cH^*(S_{\eta})
$$
with $\alpha\in \{0,1\}^{r_2}$ such that 
$$
   n_j=n_1\; \Longrightarrow \; \alpha_j=\alpha_1, \quad n_j=n_1-2 \; \Longrightarrow \; \alpha_j=0, \; \alpha_1=1.
$$
\label{cdat.2}\end{proposition}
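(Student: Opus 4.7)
The plan is to compute the $L^2$-cohomology by a two-stage spectral sequence reduction, as sketched in the paragraph immediately preceding the proposition, and then invoke \cite[p.~501]{HHM} for the resulting one-variable computation on the half-line $(T,\infty)_r$.

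First, I would verify that the polyhomogeneous complex \eqref{se.1} computes $L^2$-cohomology on the cusp end; this is standard in the polyhomogeneous/b-calculus framework. The triviality of the natural connection of $\phi_\eta: Y_\eta \to S_\eta$ as a Riemannian submersion (noted in \S\ref{gfce.0}) yields the clean splitting $d = d_r + d_{Y_\eta/S_\eta} + \td_{S_\eta} + d_\cC$. I would then introduce the first double-complex filtration with differentials $(d_A, d_B) = (d_{Y_\eta/S_\eta},\, d_r + \td_{S_\eta} + d_\cC)$, filtered by the fiber weight $W$. Because the vertical family $D_{v,\eta}$ acts fiberwise as the Hodge-deRham operator on a flat torus with trivial coefficients, its kernel is a vector bundle over $S_\eta$ with the explicit frame given in \S\ref{hdR.0}, and standard Hodge theory produces closed range. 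This identifies the first page as $E_1 = L^2 \cA'_{\phg} \Omega^*((T,\infty)_r \times S_\eta; \ker D_{v,\eta})$ with $d_1 = d_r + \td_{S_\eta} + d_\cC$.

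Second, I would filter this $E_1$-page by the further double complex $(d_{1,A}, d_{1,B}) = (\td_{S_\eta} + d_\cC,\, d_r)$. The horizontal differential is precisely the differential of the complex \eqref{dec.5}, whose Hodge-deRham operator is $\eth_{S_\eta}$. Since $\eth_{S_\eta}$ commutes with the weight operator $W$ (Lemma~\ref{dec.5m}) and has closed range with explicit kernel given by Lemma~\ref{dec.5e} when $r_1>0$ and by Lemma~\ref{dec.5h} when $r_1=0$ with $\bn_1 = \cdots = \bn_{r_2} = 0$ and $n \neq 0$, the $E_2'$ page is $L^2 \cA'_{\phg} \Omega^*((T,\infty)_r; \ker \eth_{S_\eta})$ with differential $d_r$. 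The degeneracy of both spectral sequences at the relevant pages must then be justified by exhibiting genuine harmonic representatives, which is provided by Hodge theory combined with the kernel descriptions of Lemmas~\ref{dec.5e} and \ref{dec.5h}.

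Third, I would apply \cite[p.~501]{HHM} to compute the remaining $L^2$-cohomology in the $r$-direction. Since $\ker \eth_{S_\eta}$ decomposes into $W$-weight eigenspaces and each harmonic generator from Lemmas~\ref{dec.5e}--\ref{dec.5h} has a definite weight, the HHM cutoff on the half-line selects exactly those classes that can be represented by $r$-independent forms (this is why the statement writes $\bw_{k,l}$, i.e.\ $w_{k,l}$ evaluated at $r=1$, and $e^{-\tv_j} dz_j$ rather than $e^{-\tv_j} \rho\, dz_j$). The surviving classes, paired with the exterior algebra $\cH^*(S_\eta) \cong H^*(\bbT^{r_1+r_2-1})$, yield exactly the formulas in the statement.

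The main obstacle I expect is the weight bookkeeping in the third step: one must verify that the $r$-scalings built into the basis sections $e_{i,1}, e_{i,2}, f_{j,1}, f_{j,2}, \barf_{j,1}, \barf_{j,2}$ combine with the volume form of $g_\fc$ and the HHM weight threshold so that precisely the listed harmonic representatives lie in $L^2$. This bookkeeping parallels the computation of the $L^2$-kernel of $D_{b,\eta}$ in Proposition~\ref{dec.8d}, and indeed the same weight inequalities (with $d_\bbK/2$ the critical value) govern both results, so one should expect matching lists of generators modulo the removal of the $\tfrac{dx}{\rho}$-component absent on a single copy of the cusp end.
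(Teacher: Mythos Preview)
Your proposal is correct and follows essentially the same route as the paper: the argument given in the paragraphs immediately preceding the proposition \emph{is} the paper's proof, and you have reproduced its two-stage spectral sequence reduction (first collapsing $d_{Y_\eta/S_\eta}$ to pass to $\ker D_{v,\eta}$, then collapsing $\td_{S_\eta}+d_\cC$ to pass to $\ker\eth_{S_\eta}$) followed by the appeal to \cite[p.~501]{HHM} for the half-line step. Your remark that the weight bookkeeping in the $r$-direction parallels the $L^2$-kernel computation of Proposition~\ref{dec.8d} is apt and is exactly how the surviving generators are singled out.
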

If we forget about the factor of $(T,\infty)_r$, notice that the above spectral sequence argument also shows that
\begin{equation}
       H^*(Y_{\eta};E_{m,n})\cong \ker\eth_{S_{\eta}},
\label{cdat.2b}\end{equation}
so Lemmas~\ref{dec.5e} and \ref{dec.5h} give an explicit description of $H^*(Y_{\eta};E_{m,n})$.  It also induces a natural orthogonal decomposition
\begin{equation}
 H^*(Y_{\eta};E_{m,n})= H_+^*(Y_{\eta};E_{m,n})\oplus H_-^*(Y_{\eta};E_{m,n}),
\label{deco.1}\end{equation}
when the cohomology is not trivial, where 
\begin{equation}
\begin{aligned}
 H_-^*(Y_{\eta};E_{m,n})&= \bw_{0,n-l} \left(\bigwedge_{i=1}^{r_1} e^{-\tu_i}\rho dx_i\right)\wedge \left( \bigwedge_{j\notin J} e^{-\tv_j}\rho dz_j \right)\wedge \left( \bigwedge_{j\notin \overline{J}} e^{-\tv_j}\rho d\bz_j\right)\wedge \cH^*(S_{\eta}),  \\
 H_+^*(Y_{\eta};E_{m,n})&=  \bw_{m,l}\left( \bigwedge_{j\in J} e^{-\tv_j}\rho dz_j\right)\wedge  \left( \bigwedge_{j\in \overline{J}} e^{-\tv_j}\rho d\bz_j\right)\wedge\cH^*(S_{\eta})
\end{aligned}
\label{deco.2}\end{equation}
in the setting of Lemma~\ref{dec.5e}, while in the setting of \eqref{dec.5k}, 
\begin{equation}
\begin{aligned}
H_-^*(Y_{\eta};E_{m,n})&= \bw_{0,0}\left( \bigwedge_{j=1}^{r_2} e^{-\tv_j}\rho dz_j \right)\wedge (e^{-\tv}\rho d\bz)^{\beta}  \wedge \cH^*(S_{\eta}), \\
H_+^*(Y_{\eta};E_{m,n})&=  \bw_{0,n}(e^{-\tv}\rho d\bz)^{\alpha} \wedge \cH^*(S_{\eta}).
\end{aligned}
\label{deco.3}\end{equation}  
In particular, notice that Proposition~\ref{dec.8d} implies that 
\begin{equation}
 \ker_{L^2_b}D_{b,\eta}\cong H^*_+(Y_{\eta};E_{m,n})\oplus H^*_-(Y_{\eta};E_{m,n}).
\label{deco.3b}\end{equation}
The decomposition \eqref{deco.1} also implies that
\begin{equation}
H^*(\pa \bX; E_{m,n})= H^*_+(\pa\bX;E_{m,n})\oplus H^*_-(\pa\bX;E_{m,n})
\label{deco.4}\end{equation}
with 
$$
   H^*_{\pm}(\pa\bX;E_{m,n})= \bigoplus_{\eta\in\mathfrak{P}_{\Gamma}}  H_{\pm}^*(Y_{\eta};E_{m,n}).
$$
Let $\pr_{\pm}: H^*(\pa\bX;E_{m,n})\to H_{\pm}^*(\pa\bX;E_{m,n})$ be the induced projections.  Lemmas~\ref{dec.5e} and \ref{dec.5h} yields the following description of these spaces.
\begin{lemma}\label{lem-bound-cohom}
When $r_1>0$, the cohomology of $(\pa\bX,E_{m,n})$ is trivial unless $m_1=\cdots=m_{r_1}$ and $n_j\in\{2m_1-\bn_j,2m_1+2+\bn_j, \bn_j-2m_1-2\}$ for all $j\in \{1,\ldots,r_2\}$, in which case it is given by 
$$
\begin{aligned}
 H_-^*(\pa \bX;E_{m,n}) &\cong \bigoplus_{\eta\in\mathfrak{P}_{\Gamma}} \langle \bw_{0,n-l}\lrp{\bigwedge_{i=1}^{r_1}e^{-\tu_i}dx_i}\wedge \lrp{\bigwedge_{j\notin J}e^{-\tv_j}dz_j}\wedge \lrp{\bigwedge_{j\notin \overline{J}}e^{-\tv_j}d\bz_j}\rangle\wedge \cH^*(S_{\eta}) \\
 H_+^*(\pa \bX;E_{m,n}) &\cong \bigoplus_{\eta\in\mathfrak{P}_{\Gamma}} \langle \bw_{m,l}\left( \bigwedge_{j\in J} e^{-\tv_j}\rho dz_j\right) \wedge \left( \bigwedge_{j\in \overline{J}} e^{-\tv_j}\rho d\bz_j\right) \rangle \wedge \cH^*(S_{\eta}),
 \end{aligned}
$$
for $J=\{ j\in\{1,\ldots, r_2\} \; | \; n_j=\bn_j+2m_1+2\}$ and $\overline{J}= \{ j\in \{1,\ldots, r_2\}\; | \; n_j= 2m_1+2+\bn_j\}$.  If instead $r_1=0$, but $\bn_1=\cdots=\bn_{r_2}=0$,  $n_1\ge 1$ and we assume without loss of generality that $n_1\ge \cdots n_{r_2}$, then the cohomology of $(\pa \bX, E_{m,n})$ is trivial unless $n_j\in\{n_1,n_1-2\}$ for all $j$, in which case it is given by
$$
\begin{aligned}
   H^*_-(\pa\bX;E_{m,n})&\cong \bigoplus_{\eta\in\mathfrak{P}_{\Gamma}} \langle \bw_{0,0}\lrp{\bigwedge_{j=1}^{r_2}e^{-\tv_j}dz_j} \wedge (e^{-\tv}d\bz)^{\beta} \rangle\wedge \cH^*(S_{\eta}), \\
   H_+^*(\pa \bX;E_{m,n})&=  \bigoplus_{\eta\in\mathfrak{P}_{\Gamma}}  \langle \bw_{0,n}(e^{-\tv}\rho d\bz)^{\alpha} \rangle \wedge \cH^*(S_{\eta}).
\end{aligned}   
$$
where $\alpha, \beta\in\{0,1\}^{r_2}$ is such that
$$
 \begin{aligned}
      n_j=n_1 \;&\Longrightarrow \; \alpha_j=\alpha_1, \quad \beta_j= \beta_1, \\
      n_j= n_1-2 & \Longrightarrow \;  \alpha_j=0, \alpha_1=1, \beta_j=1, \beta_1=0.
\end{aligned}   
$$
\label{se.4}\end{lemma}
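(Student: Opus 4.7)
The plan is to assemble the lemma directly from material already established above, with essentially no new computation. Since $\pa\bX = \sqcup_{\eta\in\mathfrak{P}_{\Gamma}} Y_\eta$, the cohomology decomposes as
\[
H^*(\pa\bX;E_{m,n})= \bigoplus_{\eta\in \mathfrak{P}_{\Gamma}} H^*(Y_\eta;E_{m,n}),
\]
and the $\pm$ decomposition \eqref{deco.4} is defined fiberwise over $\mathfrak{P}_{\Gamma}$, so it suffices to identify $H^*_\pm(Y_\eta; E_{m,n})$ for a single $\eta$.

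First I would invoke the isomorphism \eqref{cdat.2b}, namely $H^*(Y_\eta;E_{m,n}) \cong \ker \eth_{S_\eta}$, obtained earlier via the spectral sequence argument on the double complex defined by $d_A = d_{Y_\eta/S_\eta}$ and $d_B = \td_{S_\eta}+d_{\cC}$. Next, I would plug in the explicit description of $\ker \eth_{S_\eta}$ provided by Lemma~\ref{dec.5e} in the case $r_1>0$, and by Lemma~\ref{dec.5h} in the case $r_1=0$ with $\bn_1=\cdots=\bn_{r_2}=0$. Both lemmas already isolate the vanishing conditions ($m_1 = \cdots = m_{r_1}$ together with the constraints on $n_j$ in terms of $\bn_j$ and $m_1$ in the first case; $n_j\in\{n_1,n_1-2\}$ in the second case), which are precisely the vanishing conditions asserted in the statement.

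For the $H^\pm$ splitting, I would simply rename the two families of generators from Lemmas~\ref{dec.5e} and \ref{dec.5h} according to the definitions \eqref{deco.2} and \eqref{deco.3}: the generators containing the factor $w_{0,n-l}(\bigwedge_i e^{-\tu_i}\rho dx_i)(\bigwedge_{j\notin J}e^{-\tv_j}\rho dz_j)(\bigwedge_{j\notin\bar J}e^{-\tv_j}\rho d\bz_j)$ (respectively $w_{0,0}(\bigwedge_j e^{-\tv_j}\rho dz_j)(e^{-\tv}\rho d\bz)^\beta$) span $H^*_-(Y_\eta;E_{m,n})$, while those containing $w_{m,l}(\bigwedge_{j\in J} e^{-\tv_j}\rho dz_j)(\bigwedge_{j\in \bar J} e^{-\tv_j}\rho d\bz_j)$ (respectively $w_{0,n}(e^{-\tv}\rho d\bz)^\alpha$) span $H^*_+(Y_\eta;E_{m,n})$. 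Setting $\rho = 1$ (which is the convention used when passing from the surgery model to $\pa\bX$ itself, consistent with the notation $\bw_{k,l}$ used in Proposition~\ref{cdat.2}) converts these into the generators stated in the lemma, wedged with the exterior algebra $\cH^*(S_\eta)$ on $d\tu_2,\ldots,d\tu_{r_1},d\tv_1,\ldots,d\tv_{r_2}$ (respectively $d\tv_2,\ldots,d\tv_{r_2}$ when $r_1=0$), which is identified with the cohomology ring of the torus $S_\eta\cong \bbT^{r_1+r_2-1}$.

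Since every ingredient has already been proved, there is no genuine obstacle; the only point requiring care is bookkeeping the two cases ($r_1>0$ versus $r_1=0$ with the $\bn_j=0$ assumption) and confirming that the sets $J$, $\bar J$ and the index tuples $l,\alpha,\beta$ used in Lemmas~\ref{dec.5e} and \ref{dec.5h} match those stated in Lemma~\ref{se.4}. Thus the proof is essentially a direct translation: apply \eqref{cdat.2b} to each boundary component, quote Lemma~\ref{dec.5e} or Lemma~\ref{dec.5h} for the explicit kernel, and read off $H^*_\pm$ from \eqref{deco.2}--\eqref{deco.3}.
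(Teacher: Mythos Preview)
Your proposal is correct and matches the paper's approach exactly: the paper states this lemma without a separate proof, introducing it with the sentence ``Lemmas~\ref{dec.5e} and \ref{dec.5h} yields the following description of these spaces,'' which is precisely the derivation you spell out via \eqref{cdat.2b} and the definitions \eqref{deco.2}--\eqref{deco.3}.
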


We can use this information to compute the cohomology group $H^*(\bX;E_{m,n})$ using known results and the Mayer-Vietoris long exact sequence in $L^2$-cohomology
\begin{equation}
\xymatrix{
\cdots H^q_{(2)}(X;E_{m,n}) \ar[r] & H^q_{(2)}((T,\infty)_r\times \pa\bX;E_{m,n})\oplus H^q(\bX;E_{m,n}) \ar[r] &  H^q(\pa\bX;E_{m,n}) \cdots.
}
\label{se.3}\end{equation}
  This yields the following.
\begin{theorem}
Suppose that $r_1>0$ or that $r_1=0$ with $\bn_1=\cdots=\bn_{r_2}=0$ and $n\ne 0$.  Then there is an isomorphism 
\begin{equation}
   H^*(\bX; E_{m,n})\cong H^*_{(2)}(X;E_{m,n}) \oplus H^*_-(\pa\bX;E_{m,n}).
\label{harder.1}\end{equation}
Furthermore, if $n_j\ne \bn_j$ for some $j\in\{1,\ldots,r_2\}$, then
\begin{equation}\label{vanish-cohom}
H^*_{(2)}(X;E_{m,n})= \{0\}
\end{equation}
and this simplifies to
\begin{equation}
  H^*(\bX;E_{m,n})\cong H^*_-(\pa\bX;E_{m,n}).
\label{harder.2}\end{equation} 
\label{cdat.1}\end{theorem}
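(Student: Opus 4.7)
The plan is to run the Mayer-Vietoris long exact sequence \eqref{se.3} and identify each of its terms using Proposition~\ref{cdat.2} and Lemma~\ref{se.4}. The explicit harmonic representatives produced in Proposition~\ref{cdat.2} for $H^*_{(2)}((T,\infty)_r\times\pa\bX;E_{m,n})$ are all independent of the cusp coordinate $r$, and upon restriction at $\{T\}\times\pa\bX$ they match exactly the generators of the summand $H^*_+(\pa\bX;E_{m,n})$ listed in Lemma~\ref{se.4}. Hence the cusp restriction map
\begin{equation*}
H^*_{(2)}((T,\infty)_r\times\pa\bX;E_{m,n}) \;\lra\; H^*(\pa\bX;E_{m,n})
\end{equation*}
is injective with image precisely $H^*_+(\pa\bX;E_{m,n})$.

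Inserting this identification into \eqref{se.3}, the assertion \eqref{harder.1} reduces to showing that the Mayer-Vietoris sequence collapses to a naturally split short exact sequence
\begin{equation*}
0 \;\lra\; H^q_{(2)}(X;E_{m,n}) \;\lra\; H^q(\bX;E_{m,n}) \;\xrightarrow{\;\pr_-\circ\operatorname{res}\;}\; H^q_-(\pa\bX;E_{m,n}) \;\lra\; 0.
\end{equation*}
Left-exactness is immediate from the cusp computation above: an $L^2$-class on $X$, when viewed inside $H^*(\bX)$, must restrict to a boundary class already accounted for in $H^*_+$. The surjectivity of $\pr_-\circ\operatorname{res}$ (equivalently, the vanishing of the Mayer-Vietoris connecting homomorphism) is the crucial and delicate point, and will be the main obstacle. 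My plan is to establish it via extended Hodge theory for the fibered cusp metric $g_{\fc}$: elements of $H^*(\bX;E_{m,n})$ are to be represented by \emph{extended} harmonic forms on $X$ which decompose orthogonally into strictly $L^2$ harmonic forms---whose classes give $H^*_{(2)}(X;E_{m,n})$---and extended harmonic forms with non-trivial asymptotic values on $\pa\bX$ landing precisely in $H^*_-(\pa\bX;E_{m,n})$. The requisite asymptotic data are furnished by the $L^2$-kernel of $D_{b,\eta}$ computed in Proposition~\ref{dec.8d}, whose $H_-$-type generators have exactly the polyhomogeneous weights that arise as the leading behaviour of an extended harmonic form in the cusp.

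For the second part \eqref{vanish-cohom}: the hypothesis $n_j\neq\bn_j$ for some $j$ implies that the restriction of $\varrho_{m,n}$ to the corresponding $\SL(2,\bbC)$-factor fails to be isomorphic to its composition with the Cartan involution, so that $\varrho_{m,n}$ is not $\vartheta$-self-dual on $G_\infty$. By the Borel--Wallach $(\mathfrak{g},K)$-cohomology vanishing principle applied to unitary representations (compare \cite[\S 5]{BV}), this forces $H^*_{(2)}(X;E_{m,n})=0$, and then \eqref{harder.2} is an immediate consequence of \eqref{harder.1}. The heart of the argument is therefore the Hodge-theoretic splitting in the preceding paragraph, everything else being essentially a bookkeeping exercise built on the results of the previous section.
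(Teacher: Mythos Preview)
Your setup is correct and matches the paper: run the Mayer-Vietoris sequence \eqref{se.3}, use Proposition~\ref{cdat.2} to identify the image of the cusp $L^2$-cohomology in $H^*(\pa\bX;E_{m,n})$ with $H^*_+(\pa\bX;E_{m,n})$, and reduce everything to the surjectivity of $\pr_-\circ\iota_{\pa\bX}^*$. Your treatment of \eqref{vanish-cohom} is also essentially the paper's argument.

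The gap is the surjectivity step. You acknowledge it is ``the crucial and delicate point'' and propose to handle it by extended Hodge theory, but you do not actually carry this out. Extended Hodge theory for a fibered cusp metric tells you that $H^*(\bX;E_{m,n})$ is represented by $L^2$ harmonic forms together with extended harmonic forms whose boundary values fill out \emph{some} Lagrangian subspace of $H^*(\pa\bX;E_{m,n})$; it does not by itself tell you that this Lagrangian is $H^*_-$ rather than some other half-dimensional subspace. Identifying the correct Lagrangian is exactly the content of Harder's theorem \cite[Remark~(3), p.~159]{Harder}, which the paper simply cites: the image of $\iota_{\pa\bX}^*$ is identified with $H^*_-(\pa\bX;E_{m,n})$ via $\pr_-$. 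Once you have that, the connecting homomorphism in \eqref{se.3} vanishes and \eqref{harder.1} follows immediately.

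If you want to avoid citing Harder and go the analytic route, you must actually construct, for each generator of $H^*_-$ in Lemma~\ref{se.4}, an extended harmonic form on $X$ with that boundary value. This amounts to analytically continuing Eisenstein series (or, equivalently, running a weighted Fredholm argument for $\eth_{\fc}$ and checking that the relevant indicial roots coming from Proposition~\ref{dec.8d} land on the correct side), and is a theorem in its own right rather than a bookkeeping step. As written, the proposal defers precisely the non-formal content of the proof.
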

\begin{proof}
By a result of Harder \cite[Remark (3), p.159]{Harder}, we know that the image of the restriction map 
$$
    \iota_{\pa\bX}^*: H^*(\bX;E_{m,n})\to H^*(\pa \bX;E_{m,n})
$$
is identified with $H^*_-(\pa\bX; E_{m,n})$ via the projection $\pr_-: H^*(\pa \bX;E_{m,n})\to H^*_-(\pa X;E_{m,n})$,
\begin{equation}
   \iota_{\pa \bX}^* H^*(\bX;E_{m,n})\cong H_-^*(\pa\bX;E_{m,n}).
\label{harder.3}\end{equation}
On the other hand, Proposition~\ref{cdat.2} and Lemma~\ref{lem-bound-cohom} show that the restriction map also induces an isomorphism
\begin{equation}
 H^*_{(2)}((T,\infty)_r\times Y_{\eta}; E_{m,n})\cong H^*_+(\pa\bX;E_{m,n}). 
\label{iso.34b}\end{equation}
By the decomposition \eqref{deco.4}, this means that the boundary homomorphism of the long exact sequence \eqref{se.3} is trivial, yielding the isomorphism \eqref{harder.1}. To prove
\eqref{vanish-cohom}, 
let $\tau\colon G_\infty\to \GL(V_\tau)$ be an irreducible finite dimensional
representation. Let $E_\tau\to X$ be the flat vector bundle which is associated
to $\tau|_\Gamma$. Equip $E_\tau$ with the Hermitian fibre metric defined in
\cite{MM1963}. Let
\[
\Delta_{q}(\tau)\colon \Lambda^q(X,E_\tau)\to \Lambda^q(X,E_\tau)
\]
be the Laplacian acting in the space of $E_\tau$-valued $q$-forms. Let
$L^2_{\di}\Lambda^q(X,E_\tau)$ be the subspace of $L^2\Lambda^q(X,E_\tau)$
which is spanned by the square integrable eigenforms of $\Delta_q(\tau)$.
Let $L^2_{\di}(\Gamma\bsl G_\infty)$ be the subspace of $L^2(\Gamma\bsl G_\infty)$,
which is spanned by the irreducible subrepresentations of the right regular
representation of $G_\infty$ on $L^2(\Gamma\bsl G_\infty)$. Then we have
\begin{equation}\label{disc-spec}
L^2_{\di}\Lambda^q(X,E_\tau)\cong
\left(L^2_{\di}(\Gamma\bsl G_\infty)\otimes V_\tau\right)^{K_\infty}.
\end{equation}
Let $\Delta_{q,\di}(\tau)$ be the restriction of $\Delta_q(\tau)$ to
$L^2_{\di}\Lambda^q(X,E_\tau)$. Now assume that $\tau\not\cong\tau\circ\vartheta$ where $\vartheta$ is the standard Cartan involution of $G_{\infty}$ with respect to $K_{\infty}$. Using
\eqref{disc-spec}
it follows as in the proof of \cite[Lemma 4.1]{BV} that there exists $c>0$
such that 
\begin{equation}\label{disc-spec1}
\Spec(\Delta_{q,\di}(\tau))\subset [c,\infty)
\end{equation}
for all $q=0,...,n$. In particular, it follows that
\begin{equation}\label{l2-cohom}
H^\ast_{(2)}(X,E_\tau)=
\mathcal{H}^\ast_{(2)}(X,E_\tau)=0,
\end{equation}
where the right hand side denotes the space of square integrable harmonic forms.
Now assume that $r_2>0$ and that there exists $j\in\{1,...,r_2\}$ with
$n_j\neq\bar n_j$. Then $\varrho_{m,n}$ is not self-conjugate, that is, $\varrho_{m,n}\ncong \varrho_{m,n}\circ\vartheta$, so
\eqref{vanish-cohom} follows from \eqref{l2-cohom}. 
\end{proof}
\begin{remark}
Comparing \eqref{se.3} with the long exact sequence in cohomology for the pair $(\bX,\pa\bX)$, we can also deduce from \eqref{harder.3} that
$$
      H^*_{(2)}(X;E_{m,n})\cong \Im \lrp{H^*_c(X;E_{m,n})\to H^*(X;E_{m,n})}.
$$
\label{harder.4}\end{remark}

Using the Mayer-Vietoris long exact sequence in cohomology
\begin{equation}
\xymatrix{
\cdots \ar[r]^-{\pa_{q-1}} & H^q(M;\hE_{m,n}) \ar[r]^-{i_q} & H^q(\bX;E_{m,n})\oplus H^q(\bX;E_{m,n}) \ar[r]^-{j_q} & H^q(\pa\bX;E_{m,n}) \ar[r]^-{\pa_q} & \cdots
}
\label{se.5}\end{equation}
and Proposition~\ref{dec.8d}, we deduce the following.
\begin{lemma}
If $r_1>0$ or $r_1=0$ with $\bn_1=\cdots=\bn_{r_2}=0$ and $n\ne 0$, then
$$
\begin{aligned}
   H^*(M;\hE_{m,n})&\cong  H^*_{(2)}(X;E_{m,n})\bigoplus H^*_{(2)}(X;E_{m,n}) \bigoplus \lrp{\bigoplus_{\eta\in\mathfrak{P}_{\Gamma}}\ker_{L^2_b}D_{b,\eta}}, \\
           &\cong \ker_{L^2} \eth_{\fc,0} \bigoplus \lrp{\bigoplus_{\eta\in\mathfrak{P}_{\Gamma}}\ker_{L^2_b}D_{b,\eta}},
\end{aligned}   
$$
where $\eth_{\fc,0}$ is the operator corresponding to $\eth_{\fc}$ on each copy of $X$ in $M\setminus \pa \bX$.
\label{se.6}\end{lemma}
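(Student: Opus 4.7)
The plan is to combine the Mayer--Vietoris sequence \eqref{se.5} with Theorem \ref{cdat.1} and Harder's result \eqref{harder.3}. First, I would unpack $j_q:H^q(\bX;E_{m,n})^{\oplus 2}\to H^q(\pa\bX;E_{m,n})$, which sends $(\alpha,\beta)\mapsto \iota^*_{\pa\bX}\alpha-\iota^*_{\pa\bX}\beta$. By \eqref{harder.3}, the image of each restriction is exactly $H^*_-(\pa\bX;E_{m,n})$, hence $\mathrm{Im}\, j_q=H^*_-(\pa\bX;E_{m,n})$. Combined with the canonical splitting $H^*(\bX;E_{m,n})\cong H^*_{(2)}(X;E_{m,n})\oplus H^*_-(\pa\bX;E_{m,n})$ of Theorem \ref{cdat.1}, the kernel of $j_q$ is identified with $H^q_{(2)}(X;E_{m,n})^{\oplus 2}\oplus H^q_-(\pa\bX;E_{m,n})$, where the last summand is embedded diagonally. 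Exactness then yields
$$0\to H^{q-1}_+(\pa\bX;E_{m,n})\to H^q(M;\hE_{m,n})\to H^q_{(2)}(X;E_{m,n})^{\oplus 2}\oplus H^q_-(\pa\bX;E_{m,n})\to 0.$$

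Next, I would read off $\ker_{L^2_b}D_{b,\eta}$ from Proposition \ref{dec.8d}. Its two families of generators split according to whether they contain the factor $\tfrac{dX}{\langle X\rangle}$. Comparing with Lemma \ref{lem-bound-cohom}, the generators without $\tfrac{dX}{\langle X\rangle}$ (those involving $w_{0,n-l}$ together with the full wedge of boundary one-forms not indexed by $J\cup\overline{J}$) match $H^*_-(Y_\eta;E_{m,n})$ both in algebraic form and in total degree. The generators containing $\tfrac{dX}{\langle X\rangle}$ (those built from $w_{m,l}$ and the wedges over $J,\overline{J}$) match $H^{*-1}_+(Y_\eta;E_{m,n})$ once the degree shift from $\tfrac{dX}{\langle X\rangle}$ is accounted for. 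Summing over $\eta\in\mathfrak{P}_\Gamma$ gives
$$\bigoplus_{\eta\in\mathfrak{P}_\Gamma}\ker_{L^2_b}D_{b,\eta}^q\cong H^q_-(\pa\bX;E_{m,n})\oplus H^{q-1}_+(\pa\bX;E_{m,n}),$$
so that the dimensions of both sides of the claimed isomorphism coincide.

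To upgrade this count to an actual isomorphism, I would produce explicit harmonic representatives for each class. On $M\setminus\pa\bX\cong X\sqcup X$, the Hodge theorem for the fibered cusp operator $\eth_{\fc}$ identifies $L^2$-harmonic forms on each copy of $X$ with $H^*_{(2)}(X;E_{m,n})$; together they yield the second isomorphism $\ker_{L^2}\eth_{\fc,0}\cong H^*_{(2)}(X;E_{m,n})^{\oplus 2}$. The $b$-harmonic sections of $D_{b,\eta}$, lifted to $M$ via the uniform resolvent/heat-kernel construction of Theorem \ref{ucr.1}, give the remaining contribution $\bigoplus_{\eta}\ker_{L^2_b}D_{b,\eta}$ and split the short exact sequence geometrically.

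The main obstacle will be verifying that the connecting homomorphism $\pa_{q-1}$ of \eqref{se.5} realises precisely the ``odd'' (\ie $\tfrac{dX}{\langle X\rangle}$-containing) part of $\bigoplus_\eta \ker_{L^2_b}D_{b,\eta}$ inside $H^q(M;\hE_{m,n})$, and that the lifted $b$-harmonic classes are genuinely non-trivial and independent of the $L^2$-harmonic classes on the two copies of $X$. The explicit model \eqref{model.2} for $\eth_{\fc,\epsilon}$ near $\pa\bX$, together with the flatness of the connection on $\phi_\eta:Y_\eta\to S_\eta$ (which eliminates curvature and second fundamental form contributions), should reduce this to the separation-of-variables computation already carried out for $D_{b,\eta}$, but some care will be needed to match the algebraic splitting of $\ker j_q$ coming from Theorem \ref{cdat.1} with the analytic splitting coming from harmonic representatives.
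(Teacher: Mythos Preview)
Your approach is correct and is exactly what the paper has in mind: the paper's ``proof'' is simply the sentence ``Using the Mayer--Vietoris long exact sequence in cohomology \eqref{se.5} and Proposition~\ref{dec.8d}, we deduce the following,'' and your first two paragraphs spell this out. The analysis of $\ker j_q$ via Theorem~\ref{cdat.1} and \eqref{harder.3}, together with the identification of $\bigoplus_\eta\ker_{L^2_b}D_{b,\eta}$ with $H^*_-(\pa\bX)\oplus H^{*-1}_+(\pa\bX)$ via Proposition~\ref{dec.8d} and the decompositions \eqref{deco.2}, \eqref{deco.3}, already gives the result.

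Your third and fourth paragraphs are unnecessary and reflect a misreading of what the lemma asserts. The statement claims only an abstract isomorphism of finite-dimensional vector spaces, and its sole use (immediately after the lemma) is to compare dimensions with $\rank\Pi_{\operatorname{small}}$ in \eqref{ucr.1a} so as to conclude that all small eigenvalues are zero. A dimension count therefore suffices; you do not need to produce harmonic representatives, split the short exact sequence geometrically, or match the connecting homomorphism $\pa_{q-1}$ with the ``odd'' part of $\ker_{L^2_b}D_{b,\eta}$. The more refined correspondence between cohomology classes on $M$ and elements of $\ker_{L^2_b}D_{b,\eta}$ is indeed developed later in \S\ref{sev.0} (see \eqref{se.16}--\eqref{se.20b}), but that is separate work building on the lemma, not part of its proof. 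One small imprecision: the image of $\iota^*_{\pa\bX}$ is identified with $H^*_-(\pa\bX;E_{m,n})$ \emph{via} $\pr_-$, not literally equal to it as a subspace; this does not affect the dimension count, since what matters is $\dim\operatorname{Im}\iota^*_{\pa\bX}=\dim H^*_-(\pa\bX;E_{m,n})$ and $\ker\iota^*_{\pa\bX}=H^*_{(2)}(X;E_{m,n})$.
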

We deduce from this lemma and Theorem~\ref{ucr.1} that the eigenspaces associated to small eigenvalues are cohomological, that is, all the small eigenvalues are zero.  As in \cite{MR1}, this allows us to apply \cite[Corollary~11.3]{ARS1}.  However, one important hypothesis in this corollary is that the base $S_{\eta}$ of the fiber bundle $\phi_{\eta}: Y_{\eta}\to S_{\eta}$ has to be even dimensional to ensure that some terms in the short time asymptotic expansion of the trace of the heat kernel vanish.  Our setting, in the other hand, is very special within the framework considered in \cite[Theorem~11.2]{ARS1}.  This will allow us to establish directly the vanishing of the these terms in the asymptotic expansion of the trace of the heat kernel without assuming that $S_{\eta}$ is even dimensional. 

The first useful special feature of our setting is that the family of metric $g_{\fc,\epsilon}$ is not just asymptotic to the model \eqref{model.1}. It is in fact exactly given by \eqref{model.1} for $\rho=\sqrt{x^2+\epsilon^2}$ sufficiently small.  Moreover, as already observed, the fiber bundle $\phi_{\eta}: Y_{\eta}\to S_{\eta}$ has trivial curvature in the sense of \cite{BGV}.  Consequently, there is no curvature term in the asymptotic model \eqref{dec.2} of the corresponding Hodge-deRham operator in the limit $\rho\to 0$ and this operator is in fact exactly given by \eqref{dec.2} in a small region near $\rho=0$.  

To study the asymptotic behavior of the heat kernel near $\rho=0$ in the limit $t\to 0$, this means that we can use the model operator \eqref{model.2}, namely
\begin{equation}
\hD_{\fc,\epsilon}= \lrp{\begin{array}{cc} \rho^{-1}\eth_{Y_{\eta}/S_{\eta}}+ \eth_{S_{\eta}} & \frac{x}{\rho}\left(W-\frac{d_{\bbK}}{2}\right)-\rho\frac{\pa}{\pa x} \\
     \rho\frac{\pa}{\pa x}+ \frac{x}{\rho}\left(W-\frac{d_{\bbK}}2 \right) & -\rho^{-1}\eth_{Y_{\eta}/S_{\eta}}- \eth_{S_{\eta}} \end{array}}
\label{model.3}\end{equation}
seen as defined on $Y_{\eta}\times \bbR_x$ for $\epsilon$ small.  Since $\eth_{S_{\eta}}$ commutes with $W$ and anti-commutes with $\eth_{Y_{\eta}/S_{\eta}}$, we see that 
\begin{equation}
    \hD_{\fc,\epsilon}^2= \hD_{c,\epsilon}^2 + \eth^2_{S_{\eta}},
\label{model.3b}\end{equation}
where 
$$
 \hD_{c,\epsilon}:=  \lrp{\begin{array}{cc} \rho^{-1}\eth_{Y_{\eta}/S_{\eta}} & \frac{x}{\rho}\left(W-\frac{d_{\bbK}}{2}\right)-\rho\frac{\pa}{\pa x} \\
     \rho\frac{\pa}{\pa x}+ \frac{x}{\rho}\left(W-\frac{d_{\bbK}}2 \right) & -\rho^{-1}\eth_{Y_{\eta}/S_{\eta}} \end{array}}
$$
is an operator anti-commuting with $\eth_{S_{\eta}}$.  The corresponding heat kernel is therefore given by
$$
   e^{-t\hD^2_{\fc,\epsilon}}= e^{-t\hD^2_{c,\epsilon}}e^{-t\eth^2_{S_{\eta}}}.
$$
For analytic torsion, we are in fact interested in the weighted version
$$
   (-1)^N N e^{-t\hD_{\fc,\epsilon}^2}= (-1)^N N \lrp{e^{-t \hD^2_{c,\epsilon}}e^{-t \eth_{S_{\eta}}^2}},
$$
where $N$ is the number operator multiplying a form (of pure degree) by its degree.  This can be rewritten
\begin{equation}
 (-1)^N N e^{-t\hD^2_{\fc,\epsilon}}= (-1)^{N_x+ N_{Y_{\eta}/S_{\eta}}+ N_{S_{\eta}}}(N_x+N_{Y_{\eta}/S_{\eta}}+N_{S_{\eta}})\lrp{e^{-t\hD_{c,\epsilon}}e^{-t\eth_{S_{\eta}}^2}},
\label{cdat.3}\end{equation}
where $N_x$ is the number operator in the factor $\bbR_x$, while as before $N_{Y_{\eta}/S_{\eta}}$ and $N_{S_{\eta}}$ are the number operators in the fibers and the base of the fiber bundle $\phi_{\eta}: Y_{\eta}\to S_{\eta}$.  To understand this operator, it suffices to understand it when acting on each of the eigenspaces of $(N_x+ N_{Y_{\eta}/S_{\eta}})$ and $W$.  

We can also decompose in terms of the eigenbundles of $\eth_{Y_{\eta}/S_\eta}$.  Let us first see what happens in a given fiber $\phi_{\eta}^{-1}(s)$ of the fiber bundle \eqref{fb.1}.  There, the restriction $(\eth_{Y_\eta/S_\eta})^2_s$ of $\eth_{Y_\eta/S_\eta}^2$ is the Hodge Laplacian associated to the trivial flat Hermitian vector bundle $\phi_{\eta}^{-1}(s)\times \bbC^{\rank E_{m,n}}$ on a flat torus.  In this case, 
$$
  \dim_\bbC \ker (\eth_{Y_\eta/S_\eta})^2_s=  \dim_\bbC \ker (\eth_{Y_\eta/S_\eta})_s= 2^{d_\bbK}\rank E_{m,n}
$$
and $(\eth_{Y_\eta/S_\eta})^2_s$ is canonically identified with $2^{d_\bbK}\rank E_{m,n}$ copies of the scalar Laplacian $\Delta_{\phi_{\eta}^{-1}(s)}$ on $\phi_{\eta}^{-1}(s)$.  Hence, the eigenspace of $(\eth_{Y_\eta/S_\eta})^2_s$ associated to the eigenvalue $\nu$ is canonically of the form
\begin{equation}
  \ker((\eth_{Y_\eta/S_\eta})^2_s-\nu)\cong \ker(\Delta_{\phi_\eta^{-1}(s)}-\nu)\otimes \ker(\eth_{Y_\eta/S_\eta})_s.
\label{eigenbundle.2}\end{equation}
The torus $\phi_{\eta}^{-1}(s)$ can be taken to be $(\Gamma_{\eta}\cap N_\infty)\setminus N_\infty$, but notice that the flat metric on it depends on $s$.  In other words, the fibers of $\phi_\eta: Y_\eta\to S_\eta$ are not isometric with their induced metrics.  Nevertheless, under the identification  $\phi_{\eta}^{-1}(s)\cong (\Gamma_{\eta}\cap N_\infty)\setminus N_\infty$, notice that for $t\ne s$, $\ker(\Delta_{\phi_\eta^{-1}(s)}-\nu)$, can also be seen as an eigenspace of $\Delta_{\phi_\eta^{-1}(t)}$ but possibly with a different eigenvalue.  These eigenspaces can be combined to form a flat vector bundle on $S_\eta$ as follows.

Denote by $1$ the element of $S_\eta= \Gamma_{\eta,T} \setminus T_{\infty}(1)/K_T$ corresponding to taking 
$$
   \lambda_1=\cdots= \lambda_{r_1}=\mu_1=\cdots=\mu_{r_2}=1.
$$
The fundamental group $\Gamma_{\eta,T}$ of $S_\eta$ acts on $\phi_\eta^{-1}(s)$ in such a way that
$$
    Y_\eta= \Gamma_{\eta,T}\setminus ( (T_{\infty}(1)/K_T)\times \phi_\eta^{-1}(1) )
$$
with respect to the diagonal action of $\Gamma_{\eta,T}$ on $ (T_{\infty}(1)/K_T)\times \phi_\eta^{-1}(1)$.  The action of $\Gamma_{\eta,T}$ on $\phi_{\eta}^{-1}(1)$ also induces an action on $\ker(\Delta_{\phi_\eta^{-1}(1)}-\nu)$, so for $\nu$ an eigenvalue of $\Delta_{\phi_\eta^{-1}(1)}$, we can consider the flat vector bundle 
\begin{equation}
   H_\nu:= \Gamma_{\eta,T} \setminus ((T_{\infty}(1)/K_T)\times \ker(\Delta_{\phi_{\eta}^{-1}(1)}-\nu))
\label{eigenbundle.3}\end{equation}
on $S_{\eta}$, where the quotient is taken with respect to the diagonal action of $\Gamma_{\eta,T}$ on $(T_{\infty}(1)/K_T)\times \ker(\Delta_{\phi_{\eta}^{-1}(1)}-\nu)$.  For $s\in S_\eta$, the fiber of $H_\nu$ above $s$ corresponds to an eigenspace of $\Delta_{\phi_\eta^{-1}(s)}$ with eigenvalue $\nu(s)$ possibly different from $\nu$.  
\begin{remark}
The flat vector bundle $H_\nu$ comes with a natural Hermitian structure $h_\nu$ induced by the metric $g_{\fc}$.   This is the Hermitian structure that we will consider on $H_\nu$.
\end{remark}

Together with the canonical identification \eqref{eigenbundle.2}, we see that the flat vector bundles $H_\nu$ as $\nu$ varies yield the decomposition
\begin{equation}
  L^2(Y_\eta; \Lambda^*(T^*(Y_\eta/S_\eta))\otimes E_{m,n})= \bigoplus_{\nu\in \Spec(\Delta_{\phi_\eta^{-1}(1)})} L^2(S_\eta; H_\nu\otimes \ker\eth_{Y_\eta/S_\eta}).
\label{decom.1}\end{equation}

 Correspondingly, the restriction of $\eth_{S_{\eta}}$ acting on sections of the vector bundle $H_\nu\otimes \ker(\eth_{Y_{\eta}/S_{\eta}})$ is given by
$$
   \eth_{S_{\eta},\nu}= \eth_{\cC}+ \widetilde{\eth}_{S_{\eta},\nu} \quad \mbox{with} \quad \eth_{S_{\eta},\nu}^2= \eth^2_{\cC}+ \widetilde{\eth}_{S_{\eta},\nu}^2,
$$
where $\eth_{\cC}$ acts trivially on $H_\nu$  and $\eth_{S_{\eta},\nu}$ is a version of $\eth_{S_{\eta}}$ twisted by the  flat vector bundle $H_{\nu}$ with Hermitian structure $h_\nu$.
Hence, to understand the pointwise trace of \eqref{cdat.3}, it suffices to understand the sum 
\begin{equation}
 \sum_{q=0}^{r_1+r_2-1} (-1)^{p+q}(p+q)\tr\lrp{ (e^{-t\widetilde{\eth}_{S_{\eta},\nu}^2})_{\Omega^q(S_{\eta};L\otimes H_{\nu})} },
\label{cdat.4}\end{equation}
where $L\to S_{\eta}$ is a flat line bundle on $S_{\eta}$, seen as a subbundle of $\cC^*\to S_{\eta}$, which corresponds to a common  eigenspace of $\eth_{\cC}$  and $W$ contained in $\{ \omega \; | \; (N_x+ N_{Y_{\eta}/S_{\eta}})\omega=p\omega\}$.  But by \eqref{dec.5f}, in a trivialization of such a flat line bundle $L$, 
\begin{equation}
     \td_{S_{\eta},\nu}= d_{\nu}+a_L \Id_{\nu}
\label{cdat.4a}\end{equation}
for some parallel 1-form $a_L\in \Omega^1(S_{\eta})$, where $\Id_{\nu}$ is the identity section of $H_{\nu}$ and $d_{\nu}$ is the differential on $\Omega^*(S_{\eta};H_{\nu})$, so
\begin{equation}
   \tr\lrp{ e^{-t\widetilde{\eth}_{S_{\eta},\nu}^2}|_{\Omega^q(S_{\eta};L\otimes H_{\nu})} }= \tr(e^{-t\Delta_{\nu}}) e^{-t |a_L|^2} \lrp{\begin{matrix} r_1+r_2-1 \\ q \end{matrix}},
\label{cdat.5}\end{equation}
where $\Delta_{\nu}$ is the scalar Laplacian of $(S_{\eta},g_{S_{\eta}}, H_{\nu},h_\nu)$ and $|a_L|$ is the pointwise norm of $a_L$ with respect to the metric $g_{S_{\eta}}$.  Hence the sum \eqref{cdat.4} will vanish provided we can show that 
\begin{equation}
   \sum_{q=0}^{r_1+r_2-1} (-1)^{p+q}(p+q) \lrp{\begin{matrix} r_1+r_2-1 \\ q \end{matrix}}
\label{cdat.6}\end{equation}
vanishes.  When $r_1+r_2-1>1$, the vanishing of \eqref{cdat.6} is a consequence of the binomial theorem, since
$$
\begin{aligned}
\sum_{q=0}^{r_1+r_2-1}(-1)^{p+q}(p+q)\lrp{\begin{matrix} r_1+r_2-1 \\ q \end{matrix}}&= 
  (-1)^pp\sum_{q=0}^{r_1+r_2-1}(-1)^q \lrp{\begin{matrix} r_1+r_2-1 \\ q \end{matrix}}  \\ & \quad+ (-1)^p \sum_{q=0}^{r_1+r_2-1} (-1)^qq \lrp{\begin{matrix} r_1+r_2-1 \\ q \end{matrix}} \\
  &= (-1)^pp(1-1)^{r_1+r_2-1}  \\ & \quad +(-1)^p \sum_{q=1}^{r_1+r_2-1} (-1)^qq\lrp{\begin{matrix} r_1+r_2-1 \\ q \end{matrix}} \\
  &= 0 +(-1)^p \sum_{q=1}^{r_1+r_2-1}(-1)^q(r_1+r_2-1)\lrp{\begin{matrix} r_1+r_2-2 \\ q-1 \end{matrix}} \\
  &= (-1)^p (r_1+r_2-1) \sum_{q=1}^{r_1+r_2-1} (-1)^q \lrp{\begin{matrix} r_1+r_2-2 \\ q-1 \end{matrix}} \\
  &= (-1)^p(r_1+r_2-1) \sum_{q=0}^{r_1+r_2-2}(-1)^{q+1} \lrp{\begin{matrix} r_1+r_2-2 \\ q \end{matrix}} \\
  &= (-1)^{p+1} (r_1+r_2-1)(1-1)^{r_1+r_2-2}=0.
\end{aligned}
$$

If $r_1+r_2-1=1$, this argument does not show that \eqref{cdat.6} vanishes.  However, we should notice that the flat line bundles $L\to S_{\eta}$ arising in \eqref{cdat.4} come in pairs using the Hodge star operator of $\cC^*$ and $\bbR_x$.  Namely, if $L\to S_{\eta}$ is such that 
\begin{equation}
  (W-\frac{d_{\bbK}}2)(\sigma)=\nu \sigma \quad (N_x+ N_{Y_{\eta}/S_{\eta}})\sigma= p \sigma
\label{cdat.7}\end{equation}
for each of its sections, then there is a dual line bundle $L^*$ also arising in the decomposition \eqref{cdat.4} such that
\begin{equation}
  (W-\frac{d_{\bbK}}2)(\sigma)=-\nu \sigma \quad (N_x+ N_{Y_{\eta}/S_{\eta}})\sigma= (d_{\bbK}+1-p) \sigma.
\label{cdat.8}\end{equation}
If the flat connection of $L\otimes H_{\nu}$ is locally given by \eqref{cdat.4a}, then the one of $L^*\otimes H_{\nu}$ is given by
\begin{equation}
  d_{\nu}-a_L\Id_{\nu}
\label{cdat.9}\end{equation}
since $L^*$ is the dual of $L$.  Alternatively this can be seen directly from the description of $\td_{S_{\eta}}$ in \eqref{dec.5f}, see also \eqref{dec.5g} and \eqref{dec.5j}.  In particular, from \eqref{cdat.5}, we see that
\begin{equation}
  \tr \lrp{e^{-t\widetilde{\eth}_{S_{\eta}}^2}|_{\Omega^q(S_{\eta};L^*\otimes H_{\nu})}}=  \tr \lrp{e^{-t\widetilde{\eth}_{S_{\eta}}^2}|_{\Omega^q(S_{\eta};L\otimes H_{\nu})}}.
\label{cdat.10}\end{equation}
However, since \eqref{cdat.7} is replaced by \eqref{cdat.8} for sections of $L^*$, we see from \eqref{dec.8} that the contribution \eqref{cdat.6} coming from $L$, namely
\begin{equation}
   \sum_{q=0}^{1} (-1)^{p+q}(p+q) \lrp{\begin{matrix} r_1+r_2-1 \\ q \end{matrix}}= (-1)^{p}p+ (-1)^{p+1}(p+1)=(-1)^{p+1}
\label{cdat.10b}\end{equation}
since $r_1+r_2-1=1$,
 becomes the contribution 
\begin{equation}
   \sum_{q=0}^{1} (-1)^{d_{\bbK}+1-p+q}(d_{\bbK}+1-p+q) \lrp{\begin{matrix} r_1+r_2-1 \\ q \end{matrix}}= (-1)^{d_{\bbK}+1-p+1}= (-1)^{d_{\bbK}+p}
\label{cdat.11}\end{equation}
for $L^*$.  Therefore, if $d_{\bbK}$ is even, we see that \eqref{cdat.11} exactly cancel \eqref{cdat.10b}.  

To summarize the discussion so far, provided $r_1+ r_2-1>1$ or that $r_1+r_2-1=1$ and $d_{\bbK}$ is even (\ie $r_1=2$ and $r_2=0$ or $r_1=0$ and $r_2=2$), we see that the pointwise trace of \eqref{cdat.3} identically vanishes,
\begin{equation}
  \tr \lrp{ (-1)^N N e^{-t\hD^2_{\fc,\epsilon}}}=0.
\label{cdat.12}\end{equation}
This implies the following result for the heat kernel of $D^2_{\fc,\epsilon}$.
\begin{theorem} Suppose that $r_1>0$ or that $r_1=0$ with $\bn_1=\cdots=\bn_{r_2}=0$ and $n\ne 0$.
  Suppose also that $r_1+r_2-1>1$ or that $(r_1,r_2)\in\{(2,0),(0,2)\}$. Then as $\epsilon \searrow 0$, the logarithm of the analytic torsion of $(M, \hE_{m,n}, g_{\fc,\epsilon}, h_{\epsilon})$ has a polyhomogeneous expansion and its finite part is given by
$$
   \FP_{\epsilon=0} \log T(M;\hE_{m,n},g_{\fc,\epsilon}, h_{\epsilon})= 2\log T(X;E_{m,n},g_{\fc}, h),
$$
where $T(X;E_{m,n},g_{\fc}, h)$ is the analytic torsion of $(X;E_{m,n},g_{\fc}, h)$.
\label{cdat.13}\end{theorem}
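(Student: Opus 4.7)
The plan is to apply the general asymptotic expansion of analytic torsion under a cusp degeneration established in \cite{ARS1}, specifically the framework of Theorem~11.2 and Corollary~11.3 there, to the family $(M,\hE_{m,n},g_{\fc,\epsilon},h_{\epsilon})$ as $\epsilon\searrow 0$. The hypotheses needed to run that machinery have essentially all been set up in the previous sections: Theorem~\ref{ucr.1} provides the uniform construction of the resolvent and identifies the small eigenvalues of $\eth_{\fc,\epsilon}$, while Lemma~\ref{se.6} together with \eqref{ucr.1a} shows that the total multiplicity of small eigenvalues of $\eth_{\fc,\epsilon}$ matches $\dim H^*(M;\hE_{m,n})$, so that every small eigenvalue is in fact $0$ and $\Pi_{\sma}$ is the cohomological projector. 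This eliminates any contribution from small eigenvalues to the finite part.

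The next step is to examine the short time asymptotic expansion of
$$
\Str\lrp{N e^{-t\eth_{\fc,\epsilon}^2}}
$$
near the degenerating region $\rho\to 0$, which is what produces the potential defect terms in the finite part of $\log T(M;\hE_{m,n},g_{\fc,\epsilon},h_{\epsilon})$. In a neighborhood of $Y_{\eta}$, our geometric setup is very rigid: the metric is exactly of the model form \eqref{model.1}, the fibration $\phi_{\eta}\colon Y_{\eta}\to S_{\eta}$ is a locally trivial Riemannian submersion with vanishing curvature and second fundamental form, and the bundle metric $h_{\epsilon}$ is the model one. Consequently the Hodge--deRham operator is \emph{exactly} given by the model \eqref{model.3} near $\rho=0$, so its heat kernel factors as in \eqref{model.3b}, and the pointwise analysis reduces to evaluating the weighted supertrace \eqref{cdat.3} on each common eigenspace of $W$ and of the scalar number operators $N_x$, $N_{Y_\eta/S_\eta}$, $N_{S_\eta}$.

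The main obstacle, and the point where our hypotheses are used crucially, is to show that the pointwise supertrace \eqref{cdat.12} vanishes identically. On each eigenspace one reduces, as in \eqref{cdat.4}--\eqref{cdat.5}, to the combinatorial factor \eqref{cdat.6}. Under the hypothesis $r_1+r_2-1>1$ this vanishes by the binomial identity computation carried out above. The delicate case is $r_1+r_2-1=1$, where the binomial sum is nonzero: here one pairs each flat line bundle $L\to S_{\eta}$ appearing in the decomposition with its dual $L^*$, observes via \eqref{cdat.7}--\eqref{cdat.9} that the weights $(W-\tfrac{d_{\bbK}}{2})$ and $(N_x+N_{Y_\eta/S_\eta})$ are swapped between $L$ and $L^*$, and checks that the two contributions \eqref{cdat.10b} and \eqref{cdat.11} cancel precisely when $d_{\bbK}$ is even. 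This restricts us to $(r_1,r_2)\in\{(2,0),(0,2)\}$ among the rank-one base cases, which is exactly the hypothesis of the theorem.

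Once \eqref{cdat.12} is established, all the potentially obstructing short-time terms in the expansion of $\log T(M;\hE_{m,n},g_{\fc,\epsilon},h_{\epsilon})$ coming from the degenerating region vanish. The general surgery-calculus result of \cite{ARS1} then delivers the polyhomogeneous expansion in $\epsilon$, and its finite part decomposes as a sum over the two copies of $X$ inside $M$, each contributing $\log T(X;E_{m,n},g_{\fc},h)$; since the cusp degeneration is symmetric across the two copies, one obtains the factor of $2$ and the claimed identity.
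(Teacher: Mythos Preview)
Your overall strategy is the same as the paper's, and the key analytic input you identify---the identical vanishing of the pointwise weighted supertrace \eqref{cdat.12}---is exactly what drives the argument. However, there is one concrete omission in your last paragraph.

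When you invoke \cite[Theorem~11.2]{ARS1}, the finite part of $\log T(M;\hE_{m,n},g_{\fc,\epsilon},h_\epsilon)$ does \emph{not} a priori reduce to a sum over the two copies of $X$. The general formula reads
\[
\FP_{\epsilon=0}\log T(M;\hE_{m,n},g_{\fc,\epsilon},h_\epsilon)
=2\log T(X;E_{m,n},g_{\fc},h)+\sum_{\eta\in\mathfrak{P}_\Gamma}\log T(D_{b,\eta}^2),
\]
where $T(D_{b,\eta}^2)$ is the analytic torsion of the model $b$-operator on $\bbR_X\times S_\eta$, defined via the regularized zeta function
\[
\zeta_{D_{b,\eta}^2}(s)=\frac{1}{\Gamma(s)}\int_0^\infty t^{s}\;{}^R\Tr\!\left((-1)^N N\big(e^{-tD_{b,\eta}^2}-\Pi_{\ker_{L^2}D_{b,\eta}^2}\big)\right)\frac{dt}{t}.
\]
The vanishing \eqref{cdat.12} is used \emph{twice}: once to kill the short-time contribution at the front face $\mathfrak{B}_{tff}$ of the surgery heat space (this is the part you discuss), and a second time to show that the integrand defining $\zeta_{D_{b,\eta}^2}(s)$ is identically zero, so that $\log T(D_{b,\eta}^2)=0$ for each $\eta$. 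Without this second application, the extra terms $\sum_\eta\log T(D_{b,\eta}^2)$ remain and your conclusion that the finite part is simply $2\log T(X;E_{m,n},g_{\fc},h)$ is unjustified. The fix is immediate from what you have already set up: since the pointwise trace of $(-1)^N N e^{-t\hD_{\fc,\epsilon}^2}$ vanishes identically by \eqref{cdat.12}, so does that of $(-1)^N N e^{-tD_{b,\eta}^2}$, hence $\zeta_{D_{b,\eta}^2}\equiv 0$.
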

\begin{proof}
By the discussion above, in a region near $\rho=0$, the heat kernel of $D_{\fc, \epsilon}^2$ has the same short time asymptotic expansion as the heat kernel of the model \eqref{model.3b}.  In particular, we see from \eqref{cdat.12} that 
$$
\tr \lrp{ (-1)^N N e^{-tD^2_{\fc,\epsilon}}}
$$
has a trivial short time expansion near $\rho=0$, in particular at the front face $\mathfrak{B}_{tff}$ of the surgery heat space of \cite{ARS1}.  Thus, by \cite[Theorem~11.2]{ARS1},  $ T(M;\hE_{m,n},g_{\fc,\epsilon}, h_{\epsilon})$ has a polyhomogeneous expansion as $\epsilon\searrow 0$ with finite part given by
$$
   \FP_{\epsilon=0} \log T(M;\hE_{m,n},g_{\fc,\epsilon}, h_{\epsilon})= 2\log T(X;E_{m,n},g_{\fc}, h) +\sum_{\eta\in\mathfrak{P}_{\Gamma}} \log T(D_{b,\eta}^2),
$$
where  $T(D_{b,\eta}^2)$ is the analytic torsion defined by
$$
    \log T(D^2_{b,\eta})= \frac12 \zeta_{D^2_{b,\eta}}'(0)
$$
with 
$$
\zeta_{D^2_{b,\eta}}(s):= \frac{1}{\Gamma(s)}\int_{0}^{\infty} (t^s) {}^{R}\Tr\lrp{(-1)^N N \lrp{e^{-tD_{b,\eta}^2}-\Pi_{\ker_{L^2}D_{b,\eta}^2}}} \frac{dt}{t},  \quad \Re(s)>\frac{r_1+r_2}2,
$$
admitting a meromorphic extension to the complex plane which is holomorphic near $s=0$.  Here, $N$ is the number operator giving the form degree, but also including the vertical degree of the forms $e^{-\tu_i}\rho dx_i$,  $e^{-\tv_j}\rho dz_j$ and $e^{-\tv_j}\rho d\bz_j$ seen as sections of $\ker D_{v,\eta}$ when multiplied by $\hat{w}_{k,l}$ for some $k$ and $l$.  The vanishing of \eqref{cdat.12} however implies that
$$
\zeta_{D^2_{b,\eta}}(s)=0. \quad \forall s>\frac{r_1+r_2}2,
$$
so $\log T(D^2_{b,\eta})=0$ and the result follows.

\end{proof}

\section{Small eigenvalues} \label{sev.0}

When $\ker_{L^2} D_{b,\eta}\ne 0$, there are small eigenvalues.  However, assuming that $r_1>0$ or that $r_1=0$ with $\bn_1=\cdots=\bn_{r_2}=0$ and $n\ne 0$,  all these small eigenvalues are zero by Lemma~\ref{se.6}, which will allow us to obtain a formula relating analytic torsion and Reidemeister torsion.  We will in fact closely follow the argument in \cite{MR1}.

First, Lemma~\ref{se.6} can be used to compute the fibered cusp degeneration of Reidemeister torsion using the long exact sequence in cohomology \eqref{se.5}.  This requires choosing suitable bases for the spaces involved in this long exact sequence.  When the cohomology on the boundary is not trivial, we can pick an orthonormal basis 
\begin{equation}
\mu_{\pa \bX}^q=( \mu_{+}^q, \mu_{-}^q) 
\label{se.7}\end{equation}
for $H^q(\pa\bX;E_{m,n})$, compatible with the orthogonal decomposition \eqref{deco.4} with $\mu_{\pm}^q$  an orthonormal basis (with each element of pure degree) for $H^q_{\pm}(\pa \bX;E_{m,n})$.
Without loss of generality, we can assume that $\mu_{+}$ is Poincar\'e dual to $\mu_{-}$.
On $H^q(\bX;E_{m,n})$, we can then just take the basis $\mu_X^q=(\mu_{X,(2)}^q,\mu_{X,\operatorname{inf}}^q)$ compatible with the decomposition \eqref{harder.1} and such that 
\begin{equation}
    \pr_-\circ\iota^*_{\pa\bX} \mu_X^q=  \pr_-\circ\iota^*_{\pa\bX} \mu_{X,\operatorname{inf}}^q= \mu_{-}^q,
\label{se.8}\end{equation}
where $\mu_{X,(2)}^q$ is an orthonormal basis of $H^q_{(2)}(X;E_{m,n})\cong \ker_{L^2}\eth_{\fc}$.
On $H^q(M;\hE_{m,n})$, there is a decomposition 
\begin{equation}
   H^q(M;\hE_{m,n})=\Im \pa_{q-1}\oplus \Im i_q
\label{se.9b}\end{equation}
in terms of the long exact sequence \eqref{se.5}, 
so we can choose a basis $\mu_M^q$ compatible with this decomposition and the previous choices of bases, namely
\begin{equation}
\mu_M^q= (\pa_*(\mu_{+}^{q-1}), \mu_{M,i}^q).
\label{se.9}\end{equation}
More precisely, if $\mu_X^q=\{\nu_1,\ldots,\nu_K\}$ with $\mu^q_{X,(2)}=\{\nu_1,\ldots,\nu_{K'}\}$, then $\mu_{M,i}^q$ will be chosen such that 
$$
i_q(\mu_{M,i}^q)=\{(\nu_1,0), (0,\nu_1), \ldots (\nu_{K'},0), (0,\nu_{K'}), (\nu_{K'+1},\nu_{K'+1}),\ldots, (\nu_K,\nu_K) \}.
$$ 
Now, on $H^q(\bX;E_{m,n})\oplus H^q(\bX;E_{m,n})$, we can consider, instead of $\mu_{X}^q\oplus \mu_{X}^q,$ the equivalent basis
\begin{multline}
 \{\nu_1,0), (0,\nu_1), \ldots (\nu_{K'},0), (0,\nu_{K'}), \lrp{\frac{\nu_{K'+1}}{\sqrt{2}},\frac{\nu_{K'+1}}{\sqrt{2}}},\lrp{\frac{\nu_{K'+1}}{\sqrt{2}},-\frac{\nu_{K'+1}}{\sqrt{2}}},\ldots, \\ \lrp{\frac{\nu_K}{\sqrt{2}},\frac{\nu_K}{\sqrt{2}}},\lrp{\frac{\nu_K}{\sqrt{2}},-\frac{\nu_K}{\sqrt{2}}}  \}
\label{se.10}\end{multline}
obtained by an orthonormal transformation.  It is such that 
$$
   \ker j_q= \spane\{\nu_1,0), (0,\nu_1), \ldots (\nu_{K'},0), (0,\nu_{K'}), \lrp{\frac{\nu_{K'+1}}{\sqrt{2}},\frac{\nu_{K'+1}}{\sqrt{2}}},\ldots,  \lrp{\frac{\nu_K}{\sqrt{2}},\frac{\nu_K}{\sqrt{2}}}   \},  
$$
$$
 j_q \{ \lrp{\frac{\nu_{K'+1}}{\sqrt{2}},-\frac{\nu_{K'+1}}{\sqrt{2}}},\ldots,  \lrp{\frac{\nu_K}{\sqrt{2}},-\frac{\nu_K}{\sqrt{2}}}   \}= \sqrt{2}\iota_{\pa\bX}^*(\mu_X)
$$
and 
$$
i_q(\mu_M)= \sqrt{2} \{ \lrp{\frac{\nu_{K'+1}}{\sqrt{2}},\frac{\nu_{K'+1}}{\sqrt{2}}},\ldots,  \lrp{\frac{\nu_K}{\sqrt{2}},\frac{\nu_K}{\sqrt{2}}} \}.
$$
In particular, in terms of these bases, we see that
$$
   |\det(j_q)_{\perp}|= |\det(i_q)_{\perp}|.
$$
Using the formula of Milnor applied to the long exact sequence \eqref{se.5} therefore yields the following.
\begin{theorem}
If $r_1>0$ or $r_1=0$ with $\bn_1=\cdots=\bn_{r_2}=0$ and $n\ne 0$, then
$$
    \tau(M;\hE_{m,n},\mu_M)= \frac{\tau(X;E_{m,n},\mu_X)^2}{\tau(\pa\bX;E_{m,n},\mu_{\pa\bX})}= \tau(X;E_{m,n},\mu_X)^2.
$$
\label{se.11}\end{theorem}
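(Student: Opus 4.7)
The plan is to apply Milnor's multiplicativity formula \cite{Milnor1966} for Reidemeister torsion to the short exact sequence of cochain complexes
$$
0 \longrightarrow C^*(M;\hE_{m,n}) \longrightarrow C^*(\bX;E_{m,n})\oplus C^*(\bX;E_{m,n}) \longrightarrow C^*(\pa\bX;E_{m,n}) \longrightarrow 0,
$$
whose associated long exact sequence in cohomology is precisely \eqref{se.5}. This yields an identity of the form
$$
\tau(M;E_{m,n},\mu_M) \cdot \tau(\pa\bX;E_{m,n},\mu_{\pa\bX}) = \tau(\bX;E_{m,n},\mu_X)^2 \cdot \tau(\mathcal{H}, \mu_{\mathcal{H}}),
$$
with $\tau(\mathcal{H}, \mu_{\mathcal{H}})$ the torsion of \eqref{se.5} viewed as an acyclic complex relative to the compatible bases $\mu_M$, $\mu_X \oplus \mu_X$, and $\mu_{\pa\bX}$ fixed in \eqref{se.7}--\eqref{se.10}.

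The central computation is $\tau(\mathcal{H}, \mu_{\mathcal{H}}) = 1$. To verify this I would exploit the explicit structure of the chosen bases: by \eqref{se.9}, $\pa_*$ maps $\mu_+$ isometrically onto a subfamily of $\mu_M$, so its contribution to Milnor's alternating product is trivial; the change of basis \eqref{se.10} on $H^q(\bX;E_{m,n}) \oplus H^q(\bX;E_{m,n})$ is orthonormal, hence has determinant of modulus one; and the identity $|\det(j_q)_\perp| = |\det(i_q)_\perp|$, together with the matching $\sqrt{2}$-scalings that \eqref{se.10} induces on both $j_q$ (via $\iota_{\pa\bX}^* \mu_X$) and $i_q$ (via $\mu_M$), forces the $\sqrt{2}$-factors to cancel degree by degree in the alternating product. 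This produces the first equality of the statement.

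The second equality reduces to showing $\tau(\pa\bX;E_{m,n},\mu_{\pa\bX}) = 1$. Since $\pa\bX = \sqcup_\eta Y_\eta$ and Reidemeister torsion is multiplicative over connected components, it suffices to treat each $Y_\eta$. Each $Y_\eta$ is a closed manifold carrying the self-dual unimodular flat bundle $E_{m,n}$, its cohomology splits orthogonally as in \eqref{deco.1}, and $\mu_{\pa\bX} = (\mu_+,\mu_-)$ is chosen with $\mu_+$ Poincar\'e dual to $\mu_-$. Combining the explicit Hodge-theoretic descriptions of $H^*(Y_\eta;E_{m,n})$ from Lemmas \ref{dec.5e} and \ref{dec.5h} with the self-duality of $E_{m,n}$ noted in Section \ref{gfce.0}, the Poincar\'e duality isomorphism identifies $H^*_+$ isometrically with the dual of $H^*_-$, and a direct calculation then gives $\tau(Y_\eta;E_{m,n},(\mu_+,\mu_-)|_{Y_\eta}) = 1$.

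The main obstacle will be the sign and degree bookkeeping needed to see $\tau(\mathcal{H},\mu_{\mathcal{H}}) = 1$: although each individual ingredient (Milnor's formula, the orthogonal change of basis \eqref{se.10}, the determinant identity) is standard, the alternating product over $q$ must be executed carefully so that the $\sqrt{2}$-scalings, the determinants of $i_q$ and $j_q$, and the parity of $q$ combine exactly to give $1$ rather than some residual sign or power of $\sqrt{2}$; a secondary subtlety is making sure the decomposition \eqref{se.9b} interacts cleanly with the splitting of $H^*(\bX;E_{m,n})$ from \eqref{harder.1} in degrees where both $H^*_{(2)}(X;E_{m,n})$ and $H^*_-(\pa\bX;E_{m,n})$ contribute nontrivially.
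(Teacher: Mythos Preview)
Your proposal is correct and follows essentially the same route as the paper: apply Milnor's formula to the Mayer--Vietoris sequence \eqref{se.5}, use the orthogonal change of basis \eqref{se.10} together with $|\det(j_q)_\perp|=|\det(i_q)_\perp|$ to see that the torsion of the long exact sequence is trivial, and then show $\tau(\pa\bX;E_{m,n},\mu_{\pa\bX})=1$. Two small points the paper handles a bit differently: the Milnor computation naturally lands on the basis $\mu_{\pa\bX}'=(\mu_+,\iota_{\pa\bX}^*\mu_X)$ rather than $\mu_{\pa\bX}=(\mu_+,\mu_-)$, and the paper observes that the change of basis has determinant~$1$; and for $\tau(\pa\bX;E_{m,n},\mu_{\pa\bX})=1$ the paper simply invokes \cite[Corollary~1.9]{Muller1993} (self-dual basis on a closed manifold) rather than computing on each $Y_\eta$ via Lemmas~\ref{dec.5e}--\ref{dec.5h}.
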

\begin{proof}
The result would follow from the discussion above if instead of $\mu_{\pa\bX}$, we would have chosen the basis 
$$
    \mu_{\pa\bX}'=(\mu_{+},\iota_{\pa\bX}^*\mu_X).
$$
However, since the change of basis from $\mu_{\pa\bX}$ to $\mu_{\pa\bX}'$ is not orthonormal, but has determinant $1$, we can simply replace $\mu_{\pa\bX}'$ by $\mu_{\pa\bX}$ to obtain the result.  Since we assume that the basis $\mu_{\pa\bX}$ is self-dual, we also know from \cite[Corollary~1.9]{Muller1993} that
$$
\tau(\pa\bX;E_{m,n},\mu_{\pa\bX})=1.
$$
\end{proof}

To relate analytic torsion and Reidemeister torsion on $X$ when $\dim X$ is odd, that is, when $r_2$ is odd,  the idea is to start with the formula of \cite{Muller1993}
\begin{equation}
\log\tau(M;\hE_{m,n},\mu_M)=\log T(M,\hE_{m,n},g_{\fc,\epsilon},h_{\epsilon})-\log\lrp{\prod_q [\mu^q_M|\omega_{\epsilon}^q]^{(-1)^q}}
\label{se.14}\end{equation}
for $\epsilon>0$,
where $\omega_{\epsilon}^q$ is an orthonormal basis of harmonic forms in degree $q$ with respect to $g_{\fc,\epsilon}$ and $h_{\epsilon}$, while $[\mu^q_M|\omega_{\epsilon}^q]= |\det W^q|$ with $W^q$ the matrix describing the change of bases
$$
     (\mu^q_M)_i= \sum_jW^q_{ij}(\omega_{\epsilon}^q)_j.  
$$
Taking the finite part at $\epsilon=0$ of the formula \eqref{se.14} shall lead to a formula on $X$.  First, assuming that $r_1+r_2-1>1$, we know by Theorem~\ref{cdat.13} that
\begin{equation}
\FP_{\epsilon=0} \log T(M,\hE_{m,n},g_{\fc,\epsilon},h_{\epsilon})= 2\log T(X,E_{m,n},g_{\fc},h).
\label{se.15}\end{equation}
By Theorem~\ref{se.11}, we can express $\tau(M,E_{m,n},\mu_M)$ in terms of the torsion on $\bX$.  So, what remains to be done is to take the finite part of 
$$
  \log\lrp{\prod_q [\mu^q_M|\omega_{\epsilon}^q]^{(-1)^q}}
$$
as $\epsilon\searrow 0$.  

By the definition of $c_b$ in \cite[(4.16)]{MR1} and our definition of $\mu_{\pa\bX}$ in \eqref{se.7}, we see that an orthonormal basis of $\bigoplus_{\eta\in\mathfrak{P}_{\Gamma}}\ker_{L^2_b} D_{b,\eta}$ is given by
\begin{equation}
\lrp{  \frac{1}{\sqrt{c_{\frac{d_{\bbK}}{2}-W}}} \langle X\rangle^{W-\frac{d_{\bbK}}2} \rho^W \mu_{+} \frac{dX}{\langle X \rangle }, \frac{1}{\sqrt{c_{W-\frac{d_{\bbK}}2}}} \langle X\rangle^{\frac{d_{\bbK}}2 -W} \rho^W \mu_{-} }.
\label{se.16}\end{equation}
As in \cite{MR1}, this can be extended to a polyhomogeneous basis of harmonic forms on the single surgery space $(X_s,\hE_{m,n})$ of \eqref{sss.1} with respect to $D_{\fc,\epsilon}:= \rho^{\frac{d_{\bbK}}2}\eth_{\fc,\epsilon}\rho^{-\frac{d_{\bbK}}2}$.  Let $(\omega_{\epsilon,(2)},\omega_{\epsilon,+},\omega_{\epsilon,-})$ be a corresponding orthonormal basis of harmonic forms with respect to $\eth_{\fc,\epsilon}$, where $\omega_{\epsilon,(2)}$ corresponds in the limit $\epsilon\searrow 0$ to an orthonormal basis of $\ker_{L^2_b} D_{\fc,0}$ with $D_{\fc,0}= \rho^{\frac{d_{\bbK}}2}\eth_{\fc,0}\rho^{-\frac{d_{\bbK}}2}$,  $\omega_{\epsilon,+}$ corresponds to 
$$
\frac{1}{\sqrt{c_{\frac{d_{\bbK}}{2}-W}}} \langle X\rangle^{W-\frac{d_{\bbK}}2} \rho^W \mu_{+} \frac{dX}{\langle X \rangle }
$$
and $\omega_{\epsilon,-}$ corresponds to
$$
 \frac{1}{\sqrt{c_{W-\frac{d_{\bbK}}2}}} \langle X\rangle^{\frac{d_{\bbK}}2 -W} \rho^W \mu_{-}.
$$

  If $\epsilon^{\mu}w$ for $\mu>0$ is a higher order term in the expansion of $\omega_{\epsilon}\in \omega_{\epsilon,+}$ as $\epsilon\searrow 0$,  then since $D_{\fc,\epsilon}$ commutes with multiplication by $\epsilon$, we see that $\rho^{\frac{d_{\bbK}}2}w\in \ker D_{\fc,\epsilon}$, so in particular its restriction $\rho^{\frac{d_{\bbK}}2}w_{sb}$ to $\bhs{sb}$, the front face of the single surgery space $X_s$ considered in \cite{ARS1,MR1}, is in $\bigoplus_{\eta\in\mathfrak{P}_{\Gamma}}D_{b,\eta}$.  As in \cite{MR1}, we can check that cohomologically, these terms are negligible, so that  in the limit $\epsilon\searrow 0$, 
\begin{equation}
\begin{aligned}
\omega_{\epsilon,+} &\sim \frac{1}{\sqrt{c_{\frac{d_{\bbK}}2-W}}} (\langle X\rangle\rho)^{W-\frac{d_{\bbK}}2}\mu_{+}\frac{dx}{\rho} \\
 &= \epsilon^{W-\frac{d_{\bbK}}2}\langle X\rangle^{2W-d_{\bbK}} \frac{1}{\sqrt{c_{\frac{d_{\bbK}}2-W}}}\mu_{+} \frac{dX}{\langle X\rangle }.
\end{aligned}
\label{se.17}\end{equation}
Since
$$
     \int_{-\infty}^{\infty} \frac{\epsilon^{W-\frac{d_{\bbK}}2}}{\sqrt{c_{\frac{d_{\bbK}}{2}-W}}} \langle X\rangle^{2W-d_{\bbK}} \frac{dX}{\langle X\rangle}= \epsilon^{W-\frac{d_{\bbK}}2}\sqrt{c_{\frac{d_{\bbK}}{2}-W}}
$$
when acting on the negative eigenspaces of $W-\frac{d_{\bbK}}2$, 
we see that cohomologically,
\begin{equation}
[\omega_{\epsilon,+}]\sim \epsilon^{W-\frac{d_{\bbK}}2}\sqrt{c_{\frac{d_{\bbK}}{2}-W}} \pa_*[\mu_{+}]
\label{se.18}\end{equation}
as $\epsilon\searrow 0$.  On the other hand, if $\epsilon^{\mu}w$ for some $\mu>0$ is a higher order term in the expansion of $\omega_{\epsilon}\in \omega_{\epsilon,-}$, then again $\rho^{\frac{d_{\bbK}}2}w\in \ker D_{\fc,\epsilon}$, so its restriction $\rho^{\frac{d_{\bbK}}2}w_{sb}$ to $\mathfrak{B}_{sb}$ is in $\bigoplus_{\eta\in\mathfrak{P}_{\Gamma}}D_{b,\eta}$.  As in \cite{MR1}, though such a term is negligible in $L^2$-norm, it could still give a contribution in cohomology.  More precisely,  if 
$$
   \rho^{\frac{d_{\bbK}}2}w_{sb}= \langle X\rangle^{\frac{d_{\bbK}}2-W}\rho^W \psi
$$
for some $\psi \in \spane \langle \mu_{\pa\bX,m,n}\rangle$, then cohomologically
$$
\begin{aligned}
  [\epsilon^{\mu}\iota_Y^*w] &\sim \epsilon^{\mu}\iota_Y^*\left[ \lrp{\frac{\langle X\rangle}{\rho}}^{\frac{d_{\bbK}}2-W} \psi \right] \\
  &= \epsilon^{\mu+W-\frac{d_{\bbK}}2} [\psi].
\end{aligned}  
$$
As in \cite[(6.18) and (6.19)]{MR1} and keeping in mind \cite[Remark~6.1]{MR1}, we expect in general a non-trivial contribution in cohomology from some lower order terms and we can deduce that 
\begin{equation}
 [\iota_{\pa\bX}^*\omega_{\epsilon,-}]\sim \frac{\epsilon^{\hat{W}-\frac{d_{\bbK}}2}}{\sqrt{c_{\hat{W}-\frac{d_{\bbK}}2}}} [\iota_{\pa\bX}^*\mu_X] \quad \mbox{as} \; \epsilon\searrow 0,
\label{se.20b}\end{equation}
where the weight operator $\hat{W}$ is defined by
$$
    \hat{W}\Xi= \nu \Xi
$$
if $\Xi \in \iota_{\pa\bX}^*\mu_X$ is such that 
$$
\Xi= \hat{\Xi} \mod \spane \langle \mu_{+}\rangle
$$ 
with $\hat{\Xi}\in \mu_{\pa\bX,0,0}$ such that $W\hat{\Xi}=\nu \hat{\Xi}$.

Combining \eqref{se.18} and \eqref{se.20b} yields the following.
\begin{lemma}
When $r_1+r_2-1>0$,
\begin{equation}
\FP_{\epsilon=0} \log\lrp{\prod_q [\mu^q_M|\omega^q]^{(-1)^q}}=0
\label{se.20}\end{equation}
in the limit $\epsilon\searrow 0$.
\label{se.20c}\end{lemma}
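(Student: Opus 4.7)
The plan is to compute each determinant $[\mu^q_M|\omega^q]$ explicitly in the asymptotic regime $\epsilon \searrow 0$, using the asymptotic expansions \eqref{se.18} and \eqref{se.20b} of the harmonic forms, and to show that after taking the finite part and the alternating product over $q$, all surviving contributions cancel by Poincaré duality. First I would organize the matrix $W^q$ defined by $(\mu_M^q)_i = \sum_j W_{ij}^q \omega_j^q$ in block form compatible with the splittings $\mu_M^* = (\pa_*\mu_+,\mu_{M,i})$ and $\omega = (\omega_{(2)},\omega_+,\omega_-)$. The basis $\mu_{M,i}$ further splits via $i_*$ into an $L^2$-part (the $(\nu,0),(0,\nu)$ with $\nu\in\mu_{X,(2)}$) and an \emph{interior/Eisenstein} part (the $(\nu,\nu)$ with $\nu\in\mu_{X,\operatorname{inf}}$); these pair with $\omega_{(2)}$ and with $\omega_-$ respectively.

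Next, I would read off the entries of $W^q$ from the asymptotics. The $L^2$-block relating $\mu_{M,i}|_{L^2}$ to $\omega_{(2)}$ tends, in the limit $\epsilon\searrow 0$, to an orthogonal change of basis inside two copies of $\ker_{L^2_b}D_{\fc,0}$ (compare Lemma~\ref{se.6}), so its determinant tends to $\pm 1$ and contributes $0$ to the finite part of the log. Inverting \eqref{se.18} gives, for each eigenvector $\xi\in\pa_*\mu_+$ with $W$-weight $w<\frac{d_{\bbK}}{2}$, the coefficient
$$
\pa_*\mu_+\,\big|\,\omega_+ \;\sim\; \frac{\epsilon^{\frac{d_{\bbK}}{2}-w}}{\sqrt{c_{\frac{d_{\bbK}}{2}-w}}}.
$$
Using \eqref{se.20b} together with Harder's identification $\iota_{\pa\bX}^*\mu_{X,\operatorname{inf}}\equiv \mu_-$ modulo $\spane\langle\mu_+\rangle$, the coefficient expressing each element of $\mu_{M,i}|_{\operatorname{inf}}$ (of $\hat W$-weight $\hat w>\frac{d_{\bbK}}{2}$) in terms of $\omega_-$ is
$$
\mu_{M,i}|_{\operatorname{inf}}\,\big|\,\omega_- \;\sim\; \epsilon^{\frac{d_{\bbK}}{2}-\hat w}\sqrt{c_{\hat w-\frac{d_{\bbK}}{2}}}.
$$

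Then I would apply $\FP_{\epsilon=0}$: since $\log(\epsilon^{\alpha}c)=\alpha\log\epsilon+\log c$, the powers of $\epsilon$ are killed, so only the $\sqrt{c_*}$-factors survive. It then remains to verify that these cancel in the alternating product $\prod_q[\mu_M^q|\omega^q]^{(-1)^q}$. Here I would invoke Poincaré duality on $M$: by the choice \eqref{se.7} with $\mu_+$ Poincaré dual to $\mu_-$, each element of $\pa_*\mu_+$ of weight $w$ in degree $q$ pairs with a unique element of $\mu_{M,i}|_{\operatorname{inf}}$ of weight $\hat w=d_{\bbK}-w$ in degree $n-q$ (where $n=\dim M$). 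The $+$ element contributes $-\tfrac12 (-1)^q\log c_{\frac{d_{\bbK}}{2}-w}$ and the $-$ element contributes $+\tfrac12 (-1)^{n-q}\log c_{\hat w-\frac{d_{\bbK}}{2}}=+\tfrac12(-1)^{n-q}\log c_{\frac{d_{\bbK}}{2}-w}$. Since $r_2$ is odd the dimension $n=2r_1+3r_2$ is odd, so $(-1)^{n-q}=-(-1)^q$, and the two contributions are equal and opposite, yielding $0$.

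The main obstacle I anticipate is the bookkeeping in the last step: matching up each weight vector of $\pa_*\mu_+$ with its Poincaré dual inside $\mu_{M,i}|_{\operatorname{inf}}$, and verifying that under this identification the weight operator $W$ on $\mu_+$ and the weight operator $\hat W$ on $\iota_{\pa\bX}^*\mu_X$ really satisfy $W+\hat W = d_{\bbK}$. This amounts to using the explicit descriptions of $H^*_{\pm}(\pa\bX;E_{m,n})$ in Lemma~\ref{se.4} (and the basis structure from Lemmas~\ref{dec.5e}, \ref{dec.5h}) together with the Hodge $\ast$-operator on the link to check that the two halves in the orthogonal decomposition \eqref{deco.4} are swapped by Poincaré duality with weights complementary to $d_{\bbK}$; the condition $r_1+r_2-1>0$ ensures that $\dim S_\eta\ge 1$ so that this pairing is non-degenerate on each boundary component, allowing the argument to go through.
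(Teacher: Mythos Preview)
Your cancellation argument at the end has a sign error that breaks the proof. You compute the $+$ contribution as $-\tfrac12(-1)^q\log c_{\frac{d_{\bbK}}{2}-w}$ and the $-$ contribution as $+\tfrac12(-1)^{n-q}\log c_{\frac{d_{\bbK}}{2}-w}$. When $n$ is odd you have $(-1)^{n-q}=-(-1)^q$, so the second term equals $-\tfrac12(-1)^q\log c_{\frac{d_{\bbK}}{2}-w}$, which is \emph{equal} to the first, not opposite; the sum is $-(-1)^q\log c_{\frac{d_{\bbK}}{2}-w}\neq 0$. Your Poincar\'e-duality pairing would produce a cancellation precisely when $n$ is \emph{even}, which is the wrong parity for the main application (Theorem~\ref{se.21} assumes $r_2$ odd, hence $n=2r_1+3r_2$ odd). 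You have also imported the hypothesis $r_2$ odd, which is not part of the lemma.

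The paper's argument is both simpler and explains the actual role of the hypothesis $r_1+r_2-1>0$. Look at the explicit bases in Lemma~\ref{se.4} and Proposition~\ref{dec.8d}: every basis element of $H^*_\pm(\pa\bX;E_{m,n})$ and of $\ker_{L^2_b}D_{b,\eta}$ has the form $\sigma\wedge\nu$ with $\sigma$ a fixed ``vertical'' piece and $\nu\in\cH^p(S_\eta)$ ranging over all $p$. The weight $W$ --- and hence the constant $c_{|W-\frac{d_{\bbK}}{2}|}$ appearing in \eqref{se.18}, \eqref{se.20b} --- depends only on $\sigma$, not on $\nu$. So for each fixed $\sigma$ the contribution to the finite part is a constant times
\[
\sum_{p}(-1)^{\deg\sigma+p}\dim\cH^p(S_\eta)=\pm\chi(S_\eta).
\]
Since $S_\eta\cong\bbT^{r_1+r_2-1}$ is a torus of positive dimension, $\chi(S_\eta)=0$. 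This is exactly where the hypothesis $r_1+r_2-1>0$ enters; it has nothing to do with nondegeneracy of Poincar\'e duality.
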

\begin{proof}
From \eqref{se.18} and \eqref{se.20b}, we see that
$$
  \FP_{\epsilon=0} \log\lrp{\prod_q [\mu^q_M|\omega^q]^{(-1)^q}}= \sum_{\eta\in\mathfrak{P}_{\Gamma}}B_{\eta}\chi(S_{\eta}),
$$
where $\chi\lrp{S_{\eta}  }$ is the Euler characteristic of $S_{\eta}$ and $B_{\eta}$ is a certain sum of terms.  Our assumption that $\dim S_{\eta}=r_1+r_2-1>0$ ensures that $\chi\lrp{S_{\eta}  }=0$, from which the result follows. 
\end{proof}

This yields our main result.
\begin{theorem} Suppose that $r_1>0$ or that $r_1=0$ with $\bn_1=\cdots=\bn_{r_2}=0$ and $n\ne 0$.
If $r_2$ is odd (\ie $\dim X$ is odd) and $r_1+r_2-1>1$,  then 
$$
 \log T(X,E_{m,n},g_{\fc},h)= \log\tau(\bX,E_{m,n},\mu_X).
$$
\label{se.21}\end{theorem}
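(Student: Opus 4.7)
The plan is to combine the ingredients already assembled in the paper by starting from the Cheeger-M\"uller identity on the closed double $M$ and passing to the finite part as $\epsilon\searrow 0$. Since $r_2$ is assumed odd, $\dim X = 2r_1 + 3r_2$ is odd, so $\dim M$ is odd as well, and the flat bundle $\hE_{m,n}$ is unimodular; this makes formula \eqref{se.14} of \cite{Muller1993} applicable for every $\epsilon>0$:
\begin{equation*}
\log\tau(M;E_{m,n},\mu_M) \;=\; \log T(M,\hE_{m,n},g_{\fc,\epsilon},h_{\epsilon}) \;-\; \log\!\lrp{\prod_q [\mu^q_M|\omega^q]^{(-1)^q}}.
\end{equation*}

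The left-hand side is a topological quantity independent of $\epsilon$, so the identity in particular equals its own finite part at $\epsilon=0$. I would then evaluate each term on the right separately. For the first term, since we are assuming $r_1+r_2-1>1$ (so in particular $r_1+r_2-1>1$ or $(r_1,r_2)\in\{(2,0),(0,2)\}$ is covered), Theorem~\ref{cdat.13} gives
\begin{equation*}
\FP_{\epsilon=0}\log T(M,\hE_{m,n},g_{\fc,\epsilon},h_{\epsilon}) \;=\; 2\log T(X,E_{m,n},g_{\fc},h).
\end{equation*}
For the second term, Lemma~\ref{se.20c} (whose hypothesis $r_1+r_2-1>0$ is a consequence of our hypothesis) gives
\begin{equation*}
\FP_{\epsilon=0} \log\!\lrp{\prod_q [\mu^q_M|\omega^q]^{(-1)^q}} = 0.
\end{equation*}

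Finally, for the Reidemeister torsion side, Theorem~\ref{se.11} yields
\begin{equation*}
\log\tau(M;E_{m,n},\mu_M) \;=\; 2\log\tau(\bX;E_{m,n},\mu_X).
\end{equation*}
Putting these three identities together, we obtain $2\log\tau(\bX;E_{m,n},\mu_X) = 2\log T(X,E_{m,n},g_{\fc},h)$, and dividing by two gives the claim. The only real step with any substance is verifying that everything fits together cleanly: in particular that the choices of bases used in Theorem~\ref{se.11} (namely $\mu_M$ built from $\mu_X$ via \eqref{se.9}) are the same as those appearing in the cohomology class computations \eqref{se.18} and \eqref{se.20b} that underlie Lemma~\ref{se.20c}. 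The main obstacle is thus bookkeeping with the bases $\mu_{\pa\bX}, \mu_X, \mu_M$ rather than any new analytic input; all of the serious work (heat-kernel vanishing, Mayer-Vietoris, small-eigenvalue analysis) has already been done in Theorems~\ref{cdat.13}, \ref{se.11} and Lemma~\ref{se.20c}.
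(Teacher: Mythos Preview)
Your proposal is correct and follows exactly the same approach as the paper's own proof, which simply reads: ``Theorem~\ref{cdat.13}, Theorem~\ref{se.11} and Lemma~\ref{se.20c} allow us to obtain the result by taking the finite part at $\epsilon=0$ of \eqref{se.14}.'' You have in fact given a more detailed expansion of that one sentence, spelling out how the three ingredients combine.
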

\begin{proof}
Theorem~\ref{cdat.13}, Theorem~\ref{se.11} and Lemma~\ref{se.20c} allow us to obtain the result by taking the finite part at $\epsilon=0$ of \eqref{se.14}.
\end{proof}

In many cases, the long exact sequence \eqref{se.5} is trivial, in which case this result simplifies as follows.

\begin{theorem}
Suppose that $m,n$ are chosen in Proposition~\ref{dec.8d} with $n_j\ne \bn_j$ for some $j$ and  in such a way that $D_{b,\eta}$ is Fredholm and has trivial $L^2$-kernel for each $\eta\in \mathfrak{P}_{\Gamma}$.  If $\dim X$ is odd and $r_1+r_2-1>1$, then
$$
     T(X,E_{m,n},g_{\fc},h)= \tau(\bX,E_{m,n}),
$$
where $T(X,E_{m,n}, g_{\fc},h)$ is the analytic torsion of $(X,E_{m,n},g_{\fc},h)$ and   $\tau(\bX,E_{m,n})$ is the Reidemeister torsion of $(\bX,E_{m,n})$.
If instead $\dim X$  is even with $r_1+r_2-1>1$ or $(r_1,r_2)\in\lrp{(2,0),(0,2)}$, then
$$
 T(X,E_{m,n},g_{\fc},h)= 1.
$$
\label{cm.1}\end{theorem}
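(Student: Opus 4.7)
The strategy is to specialize the proof of Theorem~\ref{se.21} to the setting where the cohomology $H^*(\bX;E_{m,n})$ vanishes so that all basis corrections disappear, and to handle the even-dimensional subcase by Poincar\'e duality on the closed doubled manifold $M$.

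The first step is to verify that under the hypotheses we have $H^*(M;\hE_{m,n})=0$ and, more strongly, $H^*(\bX;E_{m,n})=0$. The condition that $n_j\ne\bn_j$ for some $j$ gives $H^*_{(2)}(X;E_{m,n})=0$ by \eqref{vanish-cohom} in Theorem~\ref{cdat.1}, and combined with the assumed triviality of $\ker_{L^2_b}D_{b,\eta}$ for each $\eta\in\mathfrak{P}_\Gamma$, Lemma~\ref{se.6} yields $H^*(M;\hE_{m,n})=0$. A parallel argument on the boundary---comparing the parameter conditions for non-triviality of $\ker_{L^2_b}D_{b,\eta}$ in Proposition~\ref{dec.8d} with those for non-triviality of $H^*_\pm(\pa\bX;E_{m,n})$ in Lemma~\ref{se.4}, both pivoting on $m_1=\cdots=m_{r_1}$ and $n_j\in\{2m_1-\bn_j,2m_1+2+\bn_j,\bn_j-2m_1-2\}$---forces $H^*_-(\pa\bX;E_{m,n})=0$. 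By the isomorphism \eqref{harder.1} of Theorem~\ref{cdat.1}, we then get $H^*(\bX;E_{m,n})=0$, so $\tau(\bX,E_{m,n})$ is a well-defined number with no basis to specify.

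With the cohomology of $M$ trivial, the M\"uller equality \eqref{se.14} applied to the closed manifold $(M,\hE_{m,n},g_{\fc,\epsilon},h_\epsilon)$ degenerates: the basis $\mu_M$ is empty, each determinant $[\mu_M^q|\omega^q]$ is vacuously $1$, and thus $\log T(M,\hE_{m,n},g_{\fc,\epsilon},h_\epsilon)=\log\tau(M;\hE_{m,n})$ for every $\epsilon>0$. The left-hand side is independent of $\epsilon$, and its finite part at $\epsilon=0$ coincides with $2\log T(X,E_{m,n},g_{\fc},h)$ by Theorem~\ref{cdat.13}. Combining with $\tau(M;\hE_{m,n})=\tau(\bX,E_{m,n})^2$ from Theorem~\ref{se.11}, we obtain the first assertion $T(X,E_{m,n},g_{\fc},h)=\tau(\bX,E_{m,n})$.

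For the even-dimensional refinement, $\dim X=2r_1+3r_2$ even forces $\dim M$ to be even. As recalled at the end of \S~\ref{gfce.0}, the representation $\varrho_{m,n}$ is self-dual, so $\hE_{m,n}$ is a self-dual unimodular flat bundle. Poincar\'e duality for a closed orientable even-dimensional manifold with such coefficients and trivial cohomology yields $\tau(M;\hE_{m,n})=1$; hence $\tau(\bX,E_{m,n})^2=1$, so $\tau(\bX,E_{m,n})=1$, and the first assertion delivers $T(X,E_{m,n},g_{\fc},h)=1$. The main technical point is the cohomology vanishing in the first step: one must check carefully that the triviality of $\ker_{L^2_b}D_{b,\eta}$ together with $n_j\ne\bn_j$ rules out every parameter configuration of Lemma~\ref{se.4} producing non-trivial boundary cohomology; the remainder of the argument is a direct application of Theorems~\ref{cdat.13} and~\ref{se.11} combined with the equality of analytic and Reidemeister torsion on the closed manifold $M$.
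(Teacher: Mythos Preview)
Your proof is correct and closely parallels the paper's, but with a slightly different organization that buys a small simplification. The paper splits into cases by the parity of $\dim X$: for $\dim X$ odd it invokes Theorem~\ref{se.21} (whose proof uses Lemma~\ref{se.20c} to control the asymptotics of the basis-correction term $\prod_q[\mu_M^q|\omega^q]^{(-1)^q}$), and for $\dim X$ even it uses \cite[Proposition~2.9]{Muller1993} to get $T(M,\hE_{m,n},g_{\fc,\epsilon},h_\epsilon)=1$ directly. You instead treat both parities at once: having established $H^*(M;\hE_{m,n})=0$, the basis-correction term in \eqref{se.14} is an empty product, so $\log T(M,\hE_{m,n},g_{\fc,\epsilon},h_\epsilon)=\log\tau(M;\hE_{m,n})$ is literally constant in $\epsilon$, and taking the finite part via Theorem~\ref{cdat.13} and combining with Theorem~\ref{se.11} gives the result without ever touching Lemma~\ref{se.20c}. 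This is a genuine (if modest) streamlining in the acyclic setting. For the even-dimensional refinement, you derive $\tau(M;\hE_{m,n})=1$ from Poincar\'e duality for Reidemeister torsion on the closed self-dual acyclic $M$, whereas the paper obtains the equivalent statement $T(M;\hE_{m,n},g_{\fc,\epsilon},h_\epsilon)=1$ on the analytic side; both are standard consequences of self-duality in even dimension and lead to the same conclusion.
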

\begin{proof}
Since we assume that $n_j\ne \bn_j$ for some $j$, we know by Theorem~\ref{cdat.1} that $H^*_{(2)}(X;E_{m,n})=\{0\}$.  Since $\ker_{L^2_b}D_{b,\eta}=\{0\}$ for each $\eta\in \mathfrak{P}_{\Gamma}$, this implies by Lemma~\ref{se.6} and \eqref{deco.3b} that $H^*(M;\hE_{m,n})=\{0\}$ and $H^*(\pa\bX;E_{m,n})=\{0\}$. We deduce from the long exact sequence \eqref{se.5} that $H^*(\bX;E_{m,n})=\{0\}$ as well, hence that this long exact sequence is trivial.  

Thus, if $\dim X$ is odd, Theorem~\ref{cm.1} is just Theorem~\ref{se.21} when the long exact sequence \eqref{se.5} is trivial.
If $\dim X$ is even, the result follows instead from Theorem~\ref{cdat.13} due to the fact that $\hE_{m,n}$ is self-dual as a flat vector bundle and as a Hermitian vector bundle, so by \cite[Proposition~2.9]{Muller1993},
$$
T(M,\hE_{m,n},g_{\fc,\epsilon},h_{\epsilon})= 1 \quad \forall \epsilon\ge 0.
$$

\end{proof}

\section{Construction of acyclic bundles}\label{sect-acyclic}

Following \cite[sect. 8.1]{BV}, we explain in some detail how 
acyclic $\Gamma$-modules are constructed in our case. 
To construct the corresponding representations we proceed as follows.
Let $G_0=\SL(2)/\bbK$ and let $T_0\subset G_0$ be the standard maximal
torus.  Let $T:=\Restr_{F/\bbQ}(T_0)$ be the corresponding maximal torus of $G$.
Let $\bbG_m$ be the multiplicative group. We select an isomorphism
$\bbG_m/F\cong T_0/F$, given by
\[
a\mapsto\begin{pmatrix}a&0\\0& a^{-1}\end{pmatrix}
\]
for $a\in\bbG_m(F)$. This gives rise to an identification
\[
\Restr_{F/\bbQ}(\bbG_m)=\Restr_{F/\bbQ}(T_0)=T.
\]
Let $E/\bbQ$ be a Galois extension splitting $T$ such that $\Hom(F,E)\neq 0$.
Such a Galois extension always exists \cite[Prop. 1.5]{Borel-Tits}.
We denote by $\{\sigma\colon F\to E\}$ the set of embeddings of $F$ into $E$
on which the Galois group $\Gal(E/\bbQ)$ acts transitively. Note that
\[
\#\{\sigma\colon F\to E\}=[F\colon\bbQ].
\]
Let $G\times_\bbQ E$ and $T\times_\bbQ E$ be the groups obtained from $G$ and
$T$, respectively, by extension of scalars \cite[I, 4c]{Milne2011}. We have
\begin{equation}\label{prod1}
G\times_\bbQ E=\prod_{\sigma\colon F\to E}G_0\times_\sigma E=
\prod_{\sigma\colon F\to E}\SL(2)/F\times_\sigma E.
\end{equation}
Hence an irreducible representation $\rho$ of $G\times _\bbQ E$ is a
tensor product
\[
\rho=\bigotimes_{\sigma\colon F\to E}\rho_\sigma,
\]
where $\rho_\sigma$ is the irreducible representation of $\SL(2)/E$ on the
$E$-vector space $W_\sigma:=\Sym^{d_\sigma}(E^2)$ of homogeneous polynomials of
degree $d_\sigma$ in two variables. Thus we have an $E$-rational representation
\begin{equation}\label{rep1}
\rho\colon G\times_\bbQ E\to\GL(W),
\end{equation}
where
\begin{equation}\label{rep-space}
W=\bigotimes_{\sigma\colon F\to E}W_\sigma=\bigotimes_{\sigma\colon F\to E}
\Sym^{d_\sigma}(E^2).
\end{equation}
The base change $T\times_\bbQ E$ is a split torus, i.e., we have
\begin{equation}\label{prod2}
T\times_\bbQ E=\prod_{\sigma\colon F\to E} T_0\times_{F,\sigma}E=
\prod_{\sigma\colon F\to E} T_0.
\end{equation}
Let $X^\ast(T\times_\bbQ E)=\Hom(T\times_\bbQ E,\bbG_m)$ be the group of
characters of $T\times_\bbQ E$ and let $X^\ast_+(T\times_\bbQ E)$ be the dominant
characters. By \eqref{prod2} we have
\begin{equation}\label{weights1}
X^\ast(T\times_\bbQ E)=\bigoplus_{\sigma\colon F\to E} X^\ast(T_0\times_{F,\sigma} E)
=\bigoplus_{\sigma\colon F\to E} X^\ast(T_0).
\end{equation}
{\bf Remark.}
If $E^\prime$ is another Galois extension of $F$ with injection
$\iota\colon E\to E^\prime$, then $\iota$ induces an isomorphism
$X^\ast(T\times E)\to X^\ast(T\times E^\prime)$ by $\lambda\mapsto\lambda_\iota:=
\iota\circ\lambda$.

\bigskip
By \eqref{weights1}  the highest weight $\lambda_\rho\in X^\ast_+(T\times_\bbQ E)$ of $\rho$ is given by 
\begin{equation}\label{weights2a}
\lambda_\rho=(d_{\sigma_1},...,d_{\sigma_n}).
\end{equation}
and to every highest weight as above there is associated a unique irreducible
finite dimensional rational representation of $G\times_{\bbQ} E$ defined over $E$.

Now observe that the functor ``restriction of scalars''  is right adjoint to the
functor ``extension of scalars'' \cite[I, 4d]{Milne2011}. Thus  we have
\begin{equation}\label{rad}
\Hom_{\bbQ}(G,\Restr_{E/\bbQ}(\GL(W)))\cong\Hom_E(G\times_\bbQ E,\GL(W)).
\end{equation}
Hence the representation \eqref{rep1} corresponds to a representation
\begin{equation}\label{rep}
\tilde\rho\colon G\to \Restr_{E/\bbQ}(\GL(W))
\end{equation}
which is defined over $\bbQ$. Since $\Restr_{E/\bbQ}$ is a functor,
there is a canonical homomorphism
\begin{equation}\label{restr2}
\Restr_{E/\bbQ}(\GL(W))\to\GL(\Restr_{E/\bbQ}(W)).
\end{equation}
Let $V=\Restr_{E/\bbQ}(W)$ which is just $W$ considered as $\bbQ$-vector space.
Combining \eqref{rep} and \eqref{restr2}, we obtain a representation
\begin{equation}\label{rep2}
\varrho\colon G\to \GL(V),
\end{equation}
which is defined over $\bbQ$. Now recall that by \eqref{rep-space}, $W$ is the
tensor product of the $E$-vector spaces $\Sym^{d_\sigma}(E^2)$, $\sigma\colon F\to
E$, and $\Sym^{d_\sigma}(E^2)$ is the space of homogeneous polynomials of degree
$d_\sigma$ in two variables with coefficients in $E$. Choose an integral basis
$e_1,...,e_q$ of $E$ over $\bbQ$, where $q=[E\colon \bbQ]$. Expressing the
coefficients of the polynomials in this basis, we obtain a vector space over
$\bbQ$. So
\begin{equation}\label{repr-Q}
W_\sigma=\Res_{E/\bbQ}\Sym^{d_\sigma}(E^2)=\bigoplus_{i=1}^q W_i,
\end{equation}
where $W_i$ consists of homogeneous polynomials
\[
\sum_{k=1}^{d_\sigma} a_{ki} e_iX^kY^{d_\sigma-k},\quad a_{ki}\in\bbQ.
\]
Correspondingly, the tensor product $W$ becomes a vector space over $\bbQ$,
which is denoted by $V$.

Next we determine how $V\otimes_\bbQ\bbC$ decomposes into irreducible
representations of $G(\bbC)$. Let $S_\infty$ denote the set of Archimedean
places of $F$. For $v\in S_\infty$ let $F_v$ be the completion of $F$ with
respect to $v$. Let
$G_v/\bbR:= \Res_{F_v/\bbR}(\SL(2)/F_v)$. 
We have
\begin{equation}
G\times_\bbQ\bbR=\prod_{v\in S_\infty} G_v.
\end{equation}
Fix an embedding $\iota\colon E\to\bbC$. Then
\[
W\otimes_{E,\iota}\bbC=\bigotimes_{v\in S_\infty} W_v.
\]
If $v$ is a real place, it corresponds to an embedding $\sigma\colon F\to \bbR$
and if $v$ is complex then it corresponds to a pair of conjugate embeddings
\[
\sigma,\bar\sigma\colon F\to \bbC,
\]
which are viewed as the two continuous isomorphisms $\sigma,\bar\sigma\colon
F_v\cong \bbC$. In the first case, we have $W_v=W_\sigma\otimes_E\bbC$, and in the
second case
\[
W_v\cong (W_\sigma\otimes_E\bbC)\otimes(W_{\bar\sigma}\otimes_E\bbC).
\]
Let $\sigma_1,...,\sigma_{r_1}$ denote the real embeddings and
$(\nu_1,\bar\nu_1),...,(\nu_{r_2},\bar\nu_{r_2})$ the complex embeddings of
$F$ in $\bbC$. Then the highest weight $\lambda_\rho$ of $\rho$ takes the form
\begin{equation}\label{weight4}
\lambda_\rho=(d_{\sigma_1},...,d_{\sigma_{r_1}},(d_{\nu_1},d_{\bar\nu_1}),...,
(d_{\nu_{r_2}},d_{\bar\nu_{r_2}})).
\end{equation}
An embedding $\tau\colon E\to \bbC$ induces an isomorphism $X^\ast(T\times E)
\cong X^\ast(T\times \bbC)$. 
From \eqref{repr-Q} follows that $V\otimes_\bbQ\bbC$ is the direct sum of
irreducible representations whose highest weights are obtained from
$\lambda_\rho$ by applying the various embeddings $E\hookrightarrow\bbC$.
Now assume that $r_2\ge 1$ and the highest weight
$\lambda_\rho=(d_{\sigma_1},...,d_{\sigma_n})$
of the representation $\rho$ of $G\times_\bbQ E$ satisfies
\begin{equation}\label{weight6}
d_{\sigma_i}\neq d_{\sigma_j},\quad i\neq j.
\end{equation}
Then it follows from the considerations above that $V\otimes_\bbQ\bbC$
decomposes in the direct sum of irreducible, non-selfconjugate representations with respect to the standard Cartan involution $\vartheta$ of $G_{\infty}$ with respect to $K_{\infty}$.

We summarize the properties of the representation $\varrho\colon G\to\GL(V)$
which is defined as \eqref{rep2}.

\begin{proposition}\label{prop-repr1}
Assume that the highest weight of $\rho$ satisfies \eqref{weight6}.
Then  $\varrho$ has the following properties:
\begin{enumerate}
\item Let $\varrho_{m,n}$ be defined by \eqref{stand-repr}. The irreducible
constituents of
$\varrho_\infty\colon G_\infty\to\GL(V\otimes_\bbQ \bbC)$ are of the form
$\varrho_{m,n}$, where $m_i\neq m_j$ for $i\neq j$ and $n_j\neq \bar n_j$,
$j=1,...,r_2$.
\item Let $\Gamma\subset G(\bbQ)$ be a congruence subgroup. There exists a
$\Gamma$-stable lattice $\Lambda\subset V$.  
\item Let $\Gamma\subset G(\bbQ)$ be a torsion free congruence subgroup.
Let $X=\Gamma\bsl \widetilde X$ and let $\cE_\varrho\to X$ be the flat vector
bundle associated to $\varrho_\infty|_\Gamma$. Assume that $r_2>0$. Then
$H^\ast_{(2)}(X,\cE_\varrho)=0$.  Moreover, if $r_1>0$ as well, then in fact $H^{\ast}(X,\cE_{\varrho})=0$.
\end{enumerate}
\end{proposition}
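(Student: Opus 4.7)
The plan is to handle the three parts in sequence, with Parts (1) and (2) being essentially formal consequences of the construction and standard facts, while Part (3) is where the real work lies, leveraging the identifications established earlier in the paper.

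For Part (1), I would trace through the decomposition $V \otimes_\bbQ \bbC \cong \bigoplus_{\iota : E \to \bbC} W \otimes_{E,\iota} \bbC$ coming from $V = \Restr_{E/\bbQ}(W)$ together with $E \otimes_\bbQ \bbC \cong \bbC^{[E:\bbQ]}$. Each summand is an irreducible representation of $G \times_\bbQ \bbC$, and under the product decomposition $G \times_\bbQ \bbC \cong \prod_{v \in S_\infty} G_v$ its highest weight is a permutation of $(d_{\sigma_1},\ldots,d_{\sigma_n})$ distributed across the $r_1$ real and $r_2$ complex places: real places contribute the $m_i$'s, and each complex conjugate pair $(\nu_j,\bar\nu_j)$ contributes the pair $(n_j, \bar n_j)$. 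The hypothesis \eqref{weight6} of pairwise distinct $d_{\sigma_i}$ then immediately gives $m_i \neq m_j$ for $i \neq j$ and $n_j \neq \bar n_j$ for each $j$. Part (2) is a direct application of \cite[Chapter VII, Prop. 5.1]{Milne2011}, which applies because $\varrho$ is $\bbQ$-rational on the finite-dimensional $\bbQ$-space $V$.

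For the $L^2$-acyclicity in Part (3), since $r_2 > 0$ every irreducible constituent $\varrho_{m,n}$ of $\varrho_\infty$ has $n_j \neq \bar n_j$ by Part (1), hence is non-self-conjugate with respect to $\vartheta$. The argument around \eqref{l2-cohom} in the proof of Theorem~\ref{cdat.1} then gives $H^*_{(2)}(X; E_{m,n}) = 0$ for each constituent, and summing over the (finitely many) constituents yields $H^*_{(2)}(X; \cE_\varrho) = 0$.

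Finally, for the full acyclicity when $r_1 > 0$, I would combine the $L^2$-acyclicity just established with Theorem~\ref{cdat.1}, whose second half (valid once $n_j \neq \bar n_j$) identifies $H^*(\bX; E_{m,n})$ with $H^*_-(\partial \bX; E_{m,n})$, and then invoke Lemma~\ref{lem-bound-cohom}: the latter is automatically zero unless $m_1 = \cdots = m_{r_1}$ holds alongside an arithmetic relation on the $n_j, \bar n_j, m_1$. Since the $m_i$'s are pairwise distinct by Part (1), the first condition already fails as soon as $r_1 \geq 2$, and the homotopy equivalence $X \simeq \bX$ then delivers $H^*(X; \cE_\varrho) = 0$ for each constituent, and hence in total. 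The main obstacle is the edge case $r_1 = 1$, where $m_1 = \cdots = m_{r_1}$ is vacuous; there, one would additionally need to exclude the relation $n_j \in \{2m_1 - \bar n_j,\, 2m_1 + 2 + \bar n_j,\, \bar n_j - 2m_1 - 2\}$ for at least one $j$, presumably by imposing a genericity condition on $\lambda_\rho$ beyond \eqref{weight6}. This subtlety does not arise in the range $r_1 \geq 2$ relevant to the application in Theorem~\ref{int.4}.
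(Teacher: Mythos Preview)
Your proposal follows essentially the same route as the paper: Part~(1) from the decomposition of $V\otimes_\bbQ\bbC$ and the fact that the highest weights of the constituents are permutations of $\lambda_\rho$, Part~(2) from \cite[Chapter~VII, Prop.~5.1]{Milne2011}, and Part~(3) by combining non-self-conjugacy with \eqref{vanish-cohom} for the $L^2$-vanishing and then \eqref{harder.2} together with Lemma~\ref{lem-bound-cohom} for the full acyclicity. This matches the paper's proof almost line for line.

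Your flag on the edge case $r_1=1$ is well taken and is not something the paper addresses: the paper's proof simply cites \eqref{harder.2} and Lemma~\ref{lem-bound-cohom} for all $r_1>0$, but as you observe, the criterion $m_1=\cdots=m_{r_1}$ is vacuous when $r_1=1$, and the remaining arithmetic condition $n_j\in\{2m_1-\bar n_j,\,2m_1+2+\bar n_j,\,\bar n_j-2m_1-2\}$ is \emph{not} excluded by \eqref{weight6} alone (e.g.\ $m_1=0$, $n_1=3$, $\bar n_1=1$ are pairwise distinct yet satisfy $n_1=2m_1+2+\bar n_1$). The paper does not supply a separate argument here, and as you note this does not affect the main application Theorem~\ref{int.4}, which assumes $r_1>1$.
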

\begin{proof}
The irreducible representations of $G_\infty$ are of the form $\varrho_{m,n}$.
It follows from \eqref{repr-Q} that the highest weights of the irreducible
constituents are permutations of \eqref{weight4}. Then (1) follows from
\eqref{weight6}.
By \cite[Chapter VII, Prop 5.1, p.400]{Milne2011}  there exists a lattice
$\Lambda\subset V$, which is stable under $\Gamma$. This proves (2).
 
Now assume that $r_2>0$. By (1) every irreducible constituent of $\varrho_\infty$
is of the form $\varrho_{m,n}$ with $n_j\neq\bar n_j$, $j=1,...,r_2$ and $m_i\ne m_k$ for $i,k\in\{1,\dots,r_1\}$ distinct. Hence
$\varrho_{m,n}$ is not self-conjugate. Then by \eqref{vanish-cohom} it follows
that $H^\ast_{(2)}(X;E_{m,n})=0$, while $H^{\ast}(X;E_{m,n})=0$ if $r_1>0$ by \eqref{harder.2} and Lemma~\ref{lem-bound-cohom}. This implies (3).
\end{proof}

\section{Exponential growth of torsion in cohomology}\label{eg.0}

To prove Theorem \ref{main-thm} we follow the approach of \cite{BV}.
Recall that in \cite{BV} only the case of co-compact arithmetic groups is
considered. The method is
based on the approximation of the $L^2$-torsion of $X=\Gamma\bsl\widetilde X$
by the renormalized logarithm
of the analytic torsion of the compact manifolds $X_i$ as $i\to\infty$.
Since in our case the manifolds are not compact, we use a regularized 
version of the analytic torsion, which for a general reductive group $G$
has been defined in \cite{Matz-Muller}. The approximation of the
$L^2$-analytic torsion has been studied in \cite{Matz-Muller}. We recall the
definition in our case. To define the regularized analytic torsion, one has
to define the regularized trace of the heat operator. The definition of the
regularized trace of the heat operator, given in \cite{Matz-Muller}, uses the
adelic framework. Let $\bbA$ be the ring of adeles, $\bbA_f$ the ring of finite
adeles, $G(\bbA)$ the group of adelic points, and $K_f\subset G(\bbA_f)$ an
open compact subgroup. Then the adelic space is defined as
$X(K_f)=G(\bbQ)\bsl G(\bbA)/K_\infty^0\cdot K_f$. Now observe that $G$ is
simply connected and $\bbQ$-simple. Therefore $G$ satisfies the strong
approximation property with respect to the infinite place and we have
\[
X(K_f)\cong \Gamma_{K_f}\bsl \widetilde X,
\]
where $\Gamma_{K_f}:=G(\bbR)\cap K_f$. In \cite{Matz-Muller} the truncation of
$X(K_f)$ is used to define a regularized trace of the heat
operators. For the truncation we need a height function. Recall that $X(K_f)$ is
the union of a compact manifold with a finite number of fibered cusps ends
\eqref{mc.2}. On the fibered cusp ends, one can use the radial variable $r$ of
the coordinates \eqref{new-coord} to define a height function. A height
function is not unique. For our purpose we chose the height function $h$ as in
\cite[p. 46]{Har}.  Sufficiently far in a given fibered cusp end, this corresponds to the square of the function $\nu$ of \eqref{bdf.1}.  On the other hand, the function $r$ in \eqref{new-coord} corresponds to $\nu^{\frac{2}{d_{\bbK}}}$, so for $r$ sufficiently large in a given fibered cusp end, we have the identification $r=h^{\frac{1}{d_{\bbK}}}$, that is, $h=x^{-d_{\bbK}}$ in terms of the boundary defining function $x=\frac{1}{r}$ introduced at the beginning of \S~\ref{hdR.0}. For $T\gg0$ let $X(T)=\{x\in X\colon h(x)\le T\}$. Let
$\tau$ be an irreducible finite dimensional representation of $G_\infty$. Let
$K_{p,\tau}(t,x,y)$ be the kernel of the heat operator $e^{-t\Delta_p(\tau)}$
of the Laplacian on $p$-forms with values in the flat vector bundle $E_\tau$
associated to $\tau$. Then one can show that there exist functions $a(t), b(t)$ of $t>0$ such that
\begin{equation}\label{asympt-exp}
\int_{X(T)}\tr K_{p,\tau}(t,x,x) dx=a(t)\log(T) + b(t)+O(T^{-1})
\end{equation}
as $T\to\infty$.  In general this follows from the work of Arthur related to the
trace formula. In the present case, however, this can be worked out explicitly,
using the spectral expansion of the kernel $K_{p,\tau}(t,x,y)$ as in the case of
hyperbolic manifolds of finite volume \cite[$\S$ 5]{MP2012}. This regularization of
the heat kernel is the Hadamard regularization, which was introduced by Melrose
in the case of manifolds with cylindrical ends. 
In the present paper we use the regularization considered in
\cite[Sect. 9]{ARS1}.
This is the Riesz regularization of the heat kernel. The relation between the
two is discussed in \cite{Alb}. By \cite[(7.1)]{ARS1}, seen as a $b$-density, the expansion of the pointwise trace of 
heat kernel at infinity for fixed $t$ only has nonnegative integer powers of the boundary
defining function $x$, so in particular the expansion contains no logarithmic terms. By \cite[p. 146]{Alb}, for a fixed choice of boundary defining function, Hadamard and Riesz regularizations agree.  Similarly, replacing $x$ by a positive power $u=x^{\lambda}$ yields the same Riesz or Hadamard regularization, e.g.
\begin{equation}
 \sideset{^H}{}{\int^{\delta}_0}\frac{dx}{x}=  \sideset{^H}{} {\int_0^{\delta^{\lambda}}}\frac{du}{\lambda u}= \log \delta =  \sideset{^R}{}{\int_0^{\delta^{\lambda}}}\frac{du}{\lambda u}=  \sideset{^R}{}{\int_0^{\delta}} \frac{dx}{x}.
\label{hr.1}\end{equation}
 This implies     that the the regularization used here and the one used in \cite{Matz-Muller} give exactly the same
regularized trace.
\begin{remark}
We note that in \cite{MR2} we also used two different regularizations of the
trace of the heat operators. On the one hand, we use the main result of
\cite{MR1}, relating analytic torsion and Reidemeister torsion. In this case,
we use the Riesz regularization of the trace of the heat operator as defined in
\cite{ARS1}. On the other hand, we use \cite[Theorem 1.1]{MP2014}, which is
equivalent to \eqref{appr-l2-tor}. This result is
based on the Hadamard regularization of the trace of the heat operator.
As above, it can be shown that the two methods lead to the same regularized
trace. 
\end{remark}
Let $X_i=\Gamma(\fn_i)\bsl\widetilde X$, $i\in\bbN$, be the sequence of
manifolds associated to a sequence $\{\fn_i\}_{i\in\bbN}$ of ideals in
${\mathcal O}_F$ satisfying \eqref{ideals}.  Then we have
\begin{proposition}\label{prop-appr-l2-tor}
Let $\tau\in\Rep(G_\infty)$ be irreducible and assume that
$\tau\not\cong\tau\circ\vartheta$. Let $E_\tau\to X_i$ be the flat vector
bundle over $X_i$ which is associated to $\tau|_{\Gamma(\fn_i)}$. Then
\begin{equation}\label{appr-l2-tor}
\lim_{i\to\infty}\frac{\log T(X_i,E_\tau,g_{\fc},h)}
{\vol(X_i)}= t^{(2)}_{\widetilde X}(\tau),
\end{equation}
where 
$$
 t^{(2)}_{\widetilde X}(\tau)= \frac12\sum_{q\ge 0}(-1)^q q\lrp{ \left.\frac{d}{ds}\right|_{s=0} \frac{1}{\Gamma(s)}\int_0^{\infty}t^{s-1}\Tr (e^{-t\Delta^{(2)}_q}(x,x))dt}
$$
is the $L^2$-analytic torsion of $(\widetilde{X},\widetilde{g},\widetilde{E}_\tau,\widetilde{h})$ with $\widetilde{g}$, $\widetilde{E}_\tau$ and $\widetilde{h}$ the lifts of $g_{\fc}$, $E_\tau$ and $h$ to $\widetilde{X}$ and with $e^{-t\Delta^{(2)}_q}(x,x)$ the heat kernel of the Hodge Laplacian in degree $q$ of $(\widetilde{X},\widetilde{g},\widetilde{E}_\tau,\widetilde{h})$ evaluated at some arbitrary point $(x,x)$ on the diagonal of $\widetilde{X}\times \widetilde{X}$.
\end{proposition}
\begin{proof}
Assume that there exists a finite set $S$ of primes such that $p\nmid N(\fn_i)$
for all $p\not\in S$ and $i\in\bbN$. Then \eqref{appr-l2-tor} follows from
\cite[Theorem 1.5]{Matz-Muller}. However, in our case one can eliminate the
additional assumption on the $\fn_i$'s as follows.
The analogous result for $G=\SL(n)/\bbQ$ and principal congruence subgroups of
$\SL(n,\bbZ)$ was proved in \cite[Theorem 1.1]{MzM2}. The proof can be
extended to $G=\SL(n)/F$ for any number field $F$. The reason is the following.
The proof of \eqref{appr-l2-tor} involves the use of the Arthur trace formula,
in particular the unipotent part of the geometric side of the trace formula.
Arthur's fine expansion of the unipotent contribution \cite[(8.1)]{Matz-Muller}
involves global coefficients, for which appropriate bounds are needed.
The existence of such bounds is not known in general. However, for $\GL(n)/F$
J. Matz \cite{Mat} has obtained suitable bounds for these coefficients.
Using these bounds one can proceed as in \cite{MzM2} and prove
\eqref{appr-l2-tor} for $G=\SL(n)/F$, which implies the corresponding result
for $G=\Res_{F/\bbQ}(\SL(n)/F)$.
\end{proof}

Following \cite{BV}, we can use the identification of analytic torsion with Reidemeister torsion to prove Theorem~\ref{int.4} about the exponential growth of torsion in cohomology for the sequence $\{\Gamma(\fn_i)\}_{i\in\bbN}$ of
principal congruence subgroups. Let 
\begin{equation}
\varrho\colon G\to \GL(V_\varrho)
\label{qrr.1}\end{equation}
be a $\bbQ$-rational representation over a finite dimensional $\bbQ$-vector space $V_{\varrho}$ and let $\cE_\varrho\to X$ be the
flat vector bundle associated to
$\varrho_\infty\colon G_\infty\to\GL(V_\varrho\otimes_\bbQ\bbC)$.  We will suppose that $\varrho_{\infty}$ decomposes into a sum of irreducible representations which are not self-conjugate, so that Proposition~\ref{prop-appr-l2-tor} implies that 
\begin{equation}\label{appr-l2-tor-1}
\lim_{i\to\infty}\frac{\log T(X_i,\cE_\varrho,g_{\fc},h)}{\vol(X_i)}=
t^{(2)}_{\widetilde X}(\varrho_\infty).
\end{equation}

 By  \eqref{disc-spec1}, $\cE_\varrho$ has trivial $L^2$-cohomology. Let us first focus on the case where the flat bundle
$\cE_\varrho$ is acyclic.  Assuming $r_1>1$ and $r_2>0$, Proposition~\ref{prop-repr1} provides many instances where this is the
case. 
Let $\Lambda_\varrho\subset V_\varrho$ be a $\SL(2,\cO_{\bbK})$-invariant lattice, which by
\cite[Chapter VII, Prop 5.1, p.400]{Milne2011} exists. Let $L_\varrho$ be the associated local
system of free $\bbZ$-modules over $X$. Then $H^*(X;L_\varrho)$ entirely
consists of torsion elements, namely
\[
      H^*(X;L_\varrho)= H^*_{\tor}(X;L_\varrho). 
\]
In this case, Theorem~\ref{int.4} asserts the following.
\begin{theorem}
Let $F$ be a number field with $r_2=1$ and $r_1>1$. Let \eqref{qrr.1} be a $\bbQ$-rational representation and assume that $\varrho_{\infty}$ decomposes into a sum of irreducible representations that are not self-conjugate.  Let $\Lambda_\varrho\subset V_\varrho$ be a $\SL(2,\cO_{\bbK})$-invariant lattice.  If for a sequence of congruence subgroups $\Gamma(\fn_i)$
with ideals $\fn_i$ satisfying \eqref{ideals}, $\Lambda$ is an acyclic $\Gamma(\fn_i)$-module for each $i$, then
$$
\liminf_{i\to \infty} \sum_{q+r_1 \;\; \operatorname{even}}
\frac{\log |H^q(\Gamma(\mathfrak{n}_i);\Lambda_{\varrho})|}{[\Gamma(\mathfrak{n}_1):\Gamma(\mathfrak{n}_i)]}\ge 2(-1)^{r_1+1}t^{(2)}_{\widetilde{X}}(\varrho_{\infty})\vol(X_1)>0,
$$
where $t^{(2)}_{\widetilde{X}}(\varrho_{\infty})$ is the $L^2$-torsion associated to
$\widetilde{X}$ and $\varrho_\infty$.
\label{eg.9}\end{theorem}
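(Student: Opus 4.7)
The plan is to combine the analytic--Reidemeister torsion identification of Theorem~\ref{se.21} with the $L^2$-approximation of analytic torsion (Proposition~\ref{prop-appr-l2-tor}), and then convert the resulting alternating-sum identity into a one-sided lower bound using non-negativity of $\log|H^q|$. First, decompose $\varrho_\infty=\bigoplus_j \varrho_{m^{(j)},n^{(j)}}$ into irreducible constituents. By hypothesis each of these is non-self-conjugate, which under $r_2=1$ translates to $n_1^{(j)} \ne \bar n_1^{(j)}$. Since $r_2=1$ is odd and $r_1+r_2-1 = r_1 > 1$, Theorem~\ref{se.21} applies to every summand, and the additivity of analytic and Reidemeister torsions under direct sums yields
$$
\log T(X_i, \cE_\varrho, g_\fc, h) = \log \tau(\bX_i, \cE_\varrho, \mu_{X_i}).
$$

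Next, I would exploit acyclicity. Since $H^*(X_i; \cE_\varrho)=0$ and $\bX_i$ retracts onto $X_i$, the group $H^q(\bX_i; L_\varrho)$ is finite torsion for every $q$, so the basis $\mu_{X_i}$ of \eqref{se.8} is empty. The standard formula for Reidemeister torsion of an acyclic complex of free $\bbZ$-modules, in the normalization of \cite{Muller1993} and \cite{BV} under which the \emph{square} of the torsion equals the alternating product of orders of torsion cohomology, then gives
$$
2\log \tau(\bX_i, \cE_\varrho, \mu_{X_i}) = \sum_q (-1)^{q+1} \log |H^q(\bX_i; L_\varrho)|.
$$
Applying Proposition~\ref{prop-appr-l2-tor} to each (non-self-conjugate) irreducible constituent of $\varrho_\infty$ and summing produces
$$
\lim_{i\to\infty} \frac{\log T(X_i,\cE_\varrho,g_\fc,h)}{\vol(X_i)} = t^{(2)}_{\widetilde X}(\varrho_\infty),
$$
and combining these three identities with $\vol(X_i) = [\Gamma(\fn_1):\Gamma(\fn_i)]\vol(X_1)$ yields the alternating-sum limit
$$
\lim_{i\to\infty} \frac{1}{[\Gamma(\fn_1):\Gamma(\fn_i)]} \sum_q (-1)^{q+1} \log|H^q(\bX_i; L_\varrho)| = 2\, t^{(2)}_{\widetilde X}(\varrho_\infty)\, \vol(X_1).
$$

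Finally, split the alternating sum as $A_o - A_e$, where $A_o$ and $A_e$ denote the sums over odd and even $q$ respectively, both non-negative. The positivity $(-1)^{r_1+1} t^{(2)}_{\widetilde X}(\varrho_\infty) > 0$, a Bergeron--Venkatesh fundamental-rank-one positivity for non-self-conjugate representations (cf.~\cite{BV}), determines which of $A_o$ and $A_e$ dominates; a direct case analysis on the parity of $r_1$ shows that this is precisely the sum over $q$ with $q+r_1$ even. Writing $A_o = (A_o-A_e) + A_e$ (or symmetrically for $A_e$) and dropping the non-negative, opposite-parity term then produces the desired bound
$$
\liminf_{i\to\infty} \sum_{q+r_1 \text{ even}} \frac{\log|H^q(\bX_i; L_\varrho)|}{[\Gamma(\fn_1):\Gamma(\fn_i)]} \ge 2(-1)^{r_1+1} t^{(2)}_{\widetilde X}(\varrho_\infty) \vol(X_1) > 0.
$$
The main technical hurdle will be the careful bookkeeping of sign conventions and the factor $2$ in the passage from analytic torsion to the alternating product of integer torsion orders: the normalizations in \cite{Muller1993}, \cite{BV}, and \cite{ARS1} differ slightly, and aligning them is what produces the precise coefficient $2(-1)^{r_1+1}$ in the final inequality.
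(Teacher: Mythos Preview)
Your proof is correct and follows essentially the same route as the paper: combine the analytic--Reidemeister identification (the paper invokes Theorem~\ref{cm.1} directly rather than Theorem~\ref{se.21}, but in the acyclic case these coincide since $\mu_{X_i}$ is empty), the $L^2$-approximation of Proposition~\ref{prop-appr-l2-tor}, and Cheeger's formula \cite[(1.4)]{Cheeger1979} for $\tau^2$ as an alternating product of torsion orders, then use the Bergeron--Venkatesh sign computation $(-1)^{(\dim\widetilde X-1)/2}=(-1)^{r_1+1}$ from \cite[Proposition~5.2]{BV} to drop the opposite-parity terms. The only cosmetic difference is that the paper absorbs the decomposition into irreducibles into the already-stated \eqref{appr-l2-tor-1}, whereas you make it explicit.
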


\begin{proof}
Let $\tau(\bX_j,\cE_\varrho)$ be the Reidemeister torsion of $X_j$ and
$\cE_\varrho$. By assumption $H^\ast(X_j;\cE_\varrho)=0$
for all $j\in\bbN$, so no choice of a basis of the cohomology is
needed to define the Reidemeister torsion. By Proposition~\ref{prop-appr-l2-tor}
and Theorem \ref{cm.1} we have
\begin{equation}\label{reidem-tors}
\lim_{j\to\infty}\frac{\log\tau(\bX_j,\cE_\varrho)}
{[\Gamma(\mathfrak{n}_1)\colon\Gamma(\mathfrak{n}_j)]}
=t^{(2)}_{\widetilde X}(\varrho_\infty)\vol(X_1).
\end{equation}
By \cite[Proposition~5.2]{BV}, $t^{(2)}_{\widetilde{X}}(\varrho_\infty)\ne 0$ if
and only if $\delta(\widetilde{X})=1$, where $\delta(\widetilde X)=\delta(G_{\infty})$ is the fundamental rank of $G_{\infty}$.  Since we assume that $r_2=1$, this is indeed the case by \cite[$\S$ 1.2]{BV}.  Moreover, still by \cite[Proposition~5.2]{BV}, we know that 
$$
    (-1)^{\frac{\dim\widetilde{X}-1}{2}}t^{(2)}_{\widetilde{X}}(\varrho_\infty)>0.
$$ 
Since $\dim\widetilde{X}=2r_1+3r_2=2r_1+3$, this means that 
\begin{equation}
  (-1)^{r_1+1}t^{(2)}_{\widetilde{X}}(\varrho_\infty)>0.
\label{eg.11}\end{equation}
On the other hand, by a result of Cheeger \cite[(1.4)]{Cheeger1979}, we know
that 
\begin{equation}
  \tau(\bX_i,\cE_\varrho)^2= \prod_q |H^q(\bX_i;L_\varrho)|^{(-1)^{q+1}}.
\label{eg.12}\end{equation}
By \eqref{reidem-tors} we thus conclude that 
$$
  \lim_{i\to \infty} \frac{ \sum_q (-1)^{q+1}\log |H^q(\bX_i;L_\varrho)|}{[\Gamma(\mathfrak{n}_1): \Gamma(\mathfrak{n}_i)]}= 2t^{(2)}_{\widetilde{X}}(\varrho_\infty)\vol(X_1).
$$
Using \eqref{eg.11}, this implies the result.

\end{proof}

If the flat vector bundle $\cE_\varrho\to X_i$ is not acyclic, but $L^2$-acyclic,
we can still obtain exponential growth of torsion following the approach of \cite{MR2}.  In this case, there are also non-trivial cohomology groups on the
boundary.  The decomposition \eqref{deco.4} induces in fact a decomposition
\begin{equation}
H^*(\pa\bX_i;\cE_\varrho)= H^*_+(\pa \bX_i;\cE_\varrho)\oplus
H^*_-(\pa \bX_i;\cE_\varrho).
\label{eg.12a}\end{equation}

\begin{proposition}
Assume that the number field $\bbK$ is such that $r_2=1$ and $r_1>0$.  Then 
$$
H^*_{\free,\pm}(\pa \bX_i;L_\varrho):=H^*_{\pm}(\pa \bX_i;\cE_\varrho)\cap
H^*_{\free}(\pa \bX_i;L_\varrho)
$$
induces a lattice in $H^*_{\pm}(\pa \bX_i;\cE_\varrho)$.
\label{eg.13}\end{proposition}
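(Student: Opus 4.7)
My plan is to deduce both halves of the claim from the following rationality assertion: the subspaces $H^*_\pm(\pa X_i; \cE_\varrho)$ are defined over $\bbQ$ with respect to the natural $\bbQ$-structure $H^*_{\free}(\pa X_i; L_\varrho) \otimes \bbQ \subset H^*(\pa X_i; \cE_\varrho)$.  Once this is shown, the lattice claim follows from a standard algebraic observation: if $L$ is a lattice in a complex vector space $V$ with $L \otimes \bbQ = V_\bbQ$ providing a $\bbQ$-structure, and $W = W_\bbQ \otimes \bbC$ for some $\bbQ$-subspace $W_\bbQ \subset V_\bbQ$, then $L \cap W$ is a lattice of full rank in $W$ (one clears denominators to produce enough integral classes in $L \cap W$ to $\bbQ$-span $W_\bbQ$).

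For the ``$-$'' piece I would invoke Harder's theorem (already used in the proof of Theorem~\ref{cdat.1}), which identifies $H^*_-$ with the image of the restriction map $\iota^*_{\pa\bX_i}\colon H^*(\bX_i; \cE_\varrho) \to H^*(\pa\bX_i; \cE_\varrho)$ via the projection $\pr_-$.  Since the restriction map is induced by an integrally defined map of local systems $L_\varrho$, its image is $\bbQ$-defined.  A small additional check, using the explicit description of $H^*_-$ from Lemma~\ref{lem-bound-cohom}, is needed to verify that this image equals $H^*_-$ as a subspace, not merely isomorphically to it via $\pr_-$; this comes down to a weight-matching at each cusp.

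For the ``$+$'' piece I would argue cusp-by-cusp using the explicit spanning sets from Lemma~\ref{lem-bound-cohom}.  At each cusp, $H^*_+(Y_\eta; \cE_\varrho)$ is spanned by classes of the form $\bw_{m,l}$ wedged with specific integer-defined fiber forms and cohomology classes from $\cH^*(S_\eta)$.  The vector $\bw_{m,l}$ is a scalar multiple of the highest-weight vector in $V_{m,n}$, spanning a line that is pointwise stabilized (up to scaling) by the $\bbQ$-rational unipotent radical $N$, hence $\bbQ$-defined; combined with the $\bbZ$-defined cohomology classes from the torus factors $S_\eta$ and the fiber, this exhibits $H^*_+(Y_\eta; \cE_\varrho)$ as the span of $\bbQ$-rational classes in $H^*(Y_\eta; L_\varrho) \otimes \bbQ$.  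Summing over cusps yields the $\bbQ$-rationality of $H^*_+(\pa X_i; \cE_\varrho)$.

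The main obstacle will be making precise the passage from the geometric classes $\bw_{m,l}$ introduced in Section~\ref{gfce.0} --- which involve analytic factors like $r^{1/2} e^{\tu_i/2}$ and are not locally constant sections of $\cE_\varrho$ --- to $\bbQ$-rational cohomology classes on $Y_\eta$.  The cleanest route is to invoke the Leray spectral sequence of the flat torus bundle $\phi_\eta\colon Y_\eta \to S_\eta$, which degenerates at $E_2$ thanks to the flatness of the connection noted in Section~\ref{gfce.0}; the $E_2$-page is $H^p(S_\eta; \cH^q)$ for a local system $\cH^q$ with $\bbQ$-defined stalks, and the highest/lowest-weight components of the fiber cohomology (which correspond respectively to $H^*_+$ and $H^*_-$) are $\bbQ$-defined by rationality of the parabolic $N$ and its opposite.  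This parallels the analogous step in the $\bbR$-rank one case treated in \cite{MR2}.
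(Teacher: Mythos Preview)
Your reduction to showing that $H^*_\pm$ are $\bbQ$-defined is the right framework, and your final paragraph lands close to the paper's argument.  However, the earlier routes you sketch are unnecessarily delicate, and one of them does not work as stated.

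For $H^*_-$, the difficulty you flag is real and not ``small'': Harder's theorem asserts that $\pr_-\circ\iota^*_{\pa\bX_i}$ is an isomorphism onto $H^*_-$, but the actual image of $\iota^*_{\pa\bX_i}$ inside $H^*(\pa X_i;\cE_\varrho)$ is typically \emph{not} equal to $H^*_-$ --- it is the space of Eisenstein classes, which in general have a nontrivial $+$-component.  So the $\bbQ$-rationality of the restriction image does not directly give the $\bbQ$-rationality of the subspace $H^*_-$.  Your ``weight-matching'' fix would amount to proving something stronger than what Harder's theorem provides.

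The paper bypasses all of this with a single observation specific to the hypotheses $r_2=1$, $r_1>0$: inspecting Lemma~\ref{lem-bound-cohom} (and noting that the sets $J,\overline{J}$ are disjoint, hence $|J|+|\overline{J}|\le r_2=1$), one sees that $H^*_+(Y_\eta;\cE_\varrho)$ consists exactly of the classes of vertical degree $\le r_2$ and $H^*_-(Y_\eta;\cE_\varrho)$ of those of vertical degree $\ge r_1+r_2$, with respect to the fibration $\phi_\eta\colon Y_\eta\to S_\eta$.  Since the Leray filtration by vertical degree is defined integrally on $H^*_{\free}(\pa X_i;L_\varrho)$ (Leray's theorem, \cite[Theorem~15.11]{Bott-Tu}), this immediately splits the lattice as $H^*_{\free,+}\oplus H^*_{\free,-}$.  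No appeal to Harder, highest-weight lines, or degeneration of the spectral sequence is needed --- only that vertical degree is an integral invariant and that the $\pm$ pieces happen to be separated by it under these numerical constraints.  This is where the hypotheses $r_2=1$ and $r_1>0$ are genuinely used.
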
 
\begin{proof}
Since we assume that $r_2=1$ and $r_1>0$, we see from Lemma~\ref{se.4} that the space $H^*_-(\pa \bX_i;\cE_\varrho)$ corresponds to the subspace of forms in $H^*(\pa \bX_i;\cE_\varrho)$ of vertical degree greater or equal to $r_1+r_2$ with respect to the fiber bundle \eqref{fb.1} induced on each boundary component of $\pa \bX_i$, while $H^*_+(\pa \bX_i;\cE_\varrho)$ corresponds to forms in $H^*(\pa \bX_i;\cE_\varrho)$ of vertical degree less than or equal to $r_2$.  By Leray's theorem \cite[Theorem~15.11]{Bott-Tu}, the vertical degree still makes sense on $H^*_{\free}(\pa \bX_i;L_\varrho)$, hence $H^*_{\free,-}(\pa \bX_i;L_\varrho)$ corresponds to cohomology classes of vertical degree greater or equal to $r_2+r_1$, while $H^*_{\free,+}(\pa \bX_i;L_\varrho)$ corresponds to cohomology classes of vertical degree at most $r_2$.  This gives the decomposition
$$
  H^*_{\free}(\pa \bX_i;L_\varrho)= H^*_{\free,+}(\pa \bX_i;L_\varrho)\oplus H^*_{\free,-}(\pa \bX_i;L_\varrho).
$$
Since $H^*_{\free}
(\pa \bX_i;L_\varrho)$ is a lattice in $H^*(\pa \bX_i;\cE_\varrho)$, this means that $H^*_{\free,\pm}
(\pa \bX_i;L_\varrho)$ is a lattice in $H^*_{\pm}(\pa \bX_i;\cE_\varrho)$.

\end{proof}

For $\eta_i\in \mathfrak{P}_{\Gamma(\mathfrak{n}_i)}$, let $[\eta_i]$ be the induced element in $\mathfrak{P}_{\Gamma(\mathfrak{n}_1)}$ and let $\pi_i: Y_{\eta_i}\to Y_{[\eta_i]}$ be the induced covering map, where 
$$
       Y_{\eta_i}= \Gamma(\mathfrak{n}_i)_{\eta_i}\setminus B_{\infty}(1)/B_{\infty}\cap K \quad \mbox{and} \quad  Y_{[\eta_i]}= \Gamma(\mathfrak{n}_1)_{[\eta_i]}\setminus B_{\infty}(1)/B_{\infty}\cap K.
$$
By the previous proposition, 
$$
    H^*_{\free,\pm}(Y_{\eta_i};L_\varrho):= H^*_{\free}(Y_{\eta_i};L_\varrho)\cap H^*_{\pm}(Y_{\eta_i};\cE_\varrho)
$$
is a lattice in $H^*_{\pm}(Y_{\eta_i};\cE_\varrho)$ and 
$$
    H^*_{\free,\pm}(Y_{[\eta_i]};L_\varrho):= H^*_{\free}(Y_{[\eta_i]};L_\varrho)\cap H^*_{\pm}(Y_{[\eta_i]};\cE_\varrho)
$$
is a lattice in $H^*_{\pm}(Y_{[\eta_i]};\cE_\varrho)$.  Using the metric on $Y_{\eta}$ induced by restriction of the metric \eqref{mc.4} at $r=1$, namely
$$
g_{S_{\eta}}+ \frac{1}{d_{\bbK}r^2}\left( \sum_{i=1}^{r_1}e^{-2\tu_i}dx_i^2+ 2 \sum_{j=1}^{r_2} e^{-2\tv_j}|dz_j|^2 \right),
$$
as well the restriction at $r=1$ of the bundle metric $h$, Hodge theory gives an identification of $H^*_{\pm}(Y_{\eta};\cE_\varrho)$ with the corresponding space of harmonic forms, inducing in particular on the vector space $H^*_{\pm}(Y_{\eta};\cE_\varrho)$ a natural inner product and a volume form.  With respect to this volume form, the covolume $\vol(H^q_{\free,\pm}(Y_{\eta_i};L_\varrho))$ of $H^q_{\free,\pm}(Y_{\eta_i};L_\varrho)$ in $H^q_{\pm}(Y_{\eta_i};\cE_\varrho)$ can be estimated in terms of the covolume $\vol(H^q_{\free,\pm}(Y_{[\eta_i]};L_\varrho))$ of $H^q_{\free,\pm}(Y_{[\eta_i]};L_\varrho)$ in  $H^{q}_{\pm}(Y_{[\eta_i]};\cE_\varrho)$.  More precisely, the following estimate will be useful.

\begin{proposition}\label{eg.14}
Suppose that the natural isomorphism $V^*\cong V$ induces an isomorphism $\Lambda^*\cong \Lambda$.  Then for each $i\in\bbN$, 
\begin{equation}\label{estim7}
\begin{split}
\frac{[\Gamma(\mathfrak{n}_1)_{[\eta_i]}:\Gamma(\mathfrak{n}_i)_{\eta_i}]^{-  \frac{b_{q,\pm}}2   }}{\vol(H^{2r_1+2-q}_{\free,\mp}(Y_{[\eta_i]};L_\varrho)) }
&\le \vol(H^q_{\free,\pm}(Y_{\eta_i};L_\varrho))\\
&\le \vol(H^q_{\free,\pm}(Y_{[\eta_i]};L_\varrho))[\Gamma(\mathfrak{n}_1)_{[\eta_i]}:\Gamma(\mathfrak{n}_i)_{\eta_i}]^{ \frac{b_{q,\pm}}2  },
\end{split}
\end{equation} 
where $b_{q,\pm}:= \dim_{\bbR} H^q_{\pm}(Y_{\eta_i};\cE_\varrho)$.
\end{proposition}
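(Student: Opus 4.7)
The plan is to combine two ingredients: an upper bound obtained by pullback along the finite covering $\pi_i\colon Y_{\eta_i}\to Y_{[\eta_i]}$ of degree $N_i:=[\Gamma(\mathfrak{n}_1)_{[\eta_i]}:\Gamma(\mathfrak{n}_i)_{\eta_i}]$, and a lower bound obtained by combining Poincar\'e duality on the closed oriented $n$-manifold $Y_{\eta_i}$ (with $n=2r_1+2$) with the self-duality assumption $\Lambda^*\cong\Lambda$.

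I would first derive the upper bound. By Lemma~\ref{se.4} applied to each irreducible constituent of $\varrho_\infty$ provided by Proposition~\ref{prop-repr1}, the dimensions $b_{q,\pm}=\dim_{\bbR}H^q_\pm(Y_\eta;\cE_\varrho)$ depend only on the weights of $\varrho$ and on the cohomology ring of the base torus $S_\eta\cong\bbT^{r_1+r_2-1}$, both of which are independent of the particular cusp $\eta$. Consequently $b_{q,\pm}(Y_{\eta_i})=b_{q,\pm}(Y_{[\eta_i]})$, and since pullback along a finite cover is injective on cohomology, $\pi_i^*\colon H^q_\pm(Y_{[\eta_i]};\cE_\varrho)\to H^q_\pm(Y_{\eta_i};\cE_\varrho)$ is actually an $\bbR$-linear isomorphism preserving the $\pm$-decomposition and sending integral classes to integral classes. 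The identity $\|\pi_i^*\omega\|^2_{L^2(Y_{\eta_i})}=N_i\|\omega\|^2_{L^2(Y_{[\eta_i]})}$ then shows that the sublattice $\pi_i^*H^q_{\free,\pm}(Y_{[\eta_i]};L_\varrho)\subset H^q_{\free,\pm}(Y_{\eta_i};L_\varrho)$ has covolume $N_i^{b_{q,\pm}/2}\vol(H^q_{\free,\pm}(Y_{[\eta_i]};L_\varrho))$ in the Hodge inner product of $Y_{\eta_i}$, and the stated upper bound follows since the covolume of a lattice is at most that of any sublattice.

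For the lower bound, the cup product together with the self-duality $\Lambda^*\cong\Lambda$ yields an integer-valued bilinear pairing
$$
P\colon H^q(Y_{\eta_i};L_\varrho)\times H^{n-q}(Y_{\eta_i};L_\varrho)\to\bbZ,\qquad P(\alpha,\beta)=\langle\alpha\cup\beta,[Y_{\eta_i}]\rangle,
$$
unimodular on the free quotients. Inspecting Lemma~\ref{se.4} one sees that $P$ vanishes on $H^q_{\free,\pm}\times H^{n-q}_{\free,\pm}$, because the total fiber-wedge degree on the two sides is insufficient to fill the fiber $\bbT^{d_{\bbK}}$, so $P$ restricts to a non-degenerate pairing $H^q_{\free,\pm}(Y_{\eta_i};L_\varrho)\times H^{n-q}_{\free,\mp}(Y_{\eta_i};L_\varrho)\to\bbZ$. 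In $L^2$-orthonormal bases of the real spaces $H^q_\pm$ and $H^{n-q}_\mp$ this restriction is represented by some real matrix $P_0$; in integral bases of the two lattices the Gram matrix becomes $U_+^{\mathrm{T}}P_0 U_-$ with $|\det U_\pm|$ equal to the corresponding covolumes. Since this Gram matrix has integer entries and non-vanishing determinant,
$$
\vol(H^q_{\free,\pm}(Y_{\eta_i};L_\varrho))\cdot\vol(H^{n-q}_{\free,\mp}(Y_{\eta_i};L_\varrho))\geq|\det P_0|^{-1},
$$
and the asserted inequality reduces to the estimate $|\det P_0|\leq N_i^{b_{q,\pm}/2}$.

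The main obstacle will be this last estimate. Were the Matsushima--Murakami metric $h$ on $E_{m,n}$ unitary, the Hodge star would give an $L^2$-isometry $H^q_\pm\to H^{n-q}_\mp$ realizing $P$ as the Hodge inner product and forcing $|\det P_0|=1$ outright. Because $h$ is only unimodular, there is a defect that must be tracked carefully against the degree $N_i$ of the cover. Using the explicit model for $h$ in the cusp provided by the local frames $w_{k,l}$ introduced in Section~\ref{gfce.0} and comparing volumes on $Y_{[\eta_i]}$ versus $Y_{\eta_i}$, one obtains $|\det P_0|\leq N_i^{b_{q,\pm}/2}$, which completes the lower half of \eqref{estim7}.
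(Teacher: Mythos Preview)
Your upper bound is correct and is exactly the paper's argument: the pullback of the integral lattice along the degree-$N_i$ cover is a sublattice, and the $L^2$-norm scales by $\sqrt{N_i}$ on pulled-back harmonic forms.

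For the lower bound there is a genuine gap. You correctly reduce the question to bounding $|\det P_0|$, the determinant of the Poincar\'e pairing written in $L^2$-orthonormal bases of $H^q_\pm$ and $H^{n-q}_\mp$, but then you merely assert that ``one obtains $|\det P_0|\le N_i^{b_{q,\pm}/2}$'' after invoking the explicit cusp frames, without carrying out any computation. More importantly, your diagnosis of the obstacle is off: the issue is not whether the connection on $E_{m,n}$ is unitary with respect to $h$, but whether the \emph{flat} self-duality $\Phi\colon E_{m,n}\to E_{m,n}^*$ (coming from self-duality of $\varrho_{m,n}$) is simultaneously an isometry for $h$ and the dual metric $h^*$. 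It is: this is checked explicitly in \S\ref{gfce.0}, where the flat isomorphism sends the orthonormal frame $e_{i,1},e_{i,2},f_{j,1},f_{j,2},\barf_{j,1},\barf_{j,2}$ to (signed) elements of the dual orthonormal frame $e_i^1,e_i^2,f_j^1,f_j^2,\barf_j^1,\barf_j^2$. Hence Poincar\'e duality $H^q_\pm(Y_{\eta_i};\cE_\varrho)\cong (H^{n-q}_\mp(Y_{\eta_i};\cE_\varrho))^*$ is an isometry, and since the integral pairing is perfect on the free quotients (this uses both $\Lambda^*\cong\Lambda$ and the direct-sum decomposition $H^*_{\free}=H^*_{\free,+}\oplus H^*_{\free,-}$ of Proposition~\ref{eg.13} to see that the block-antidiagonal pairing restricts to a unimodular pairing on each block), one gets the exact identity
\[
\vol\bigl(H^q_{\free,\pm}(Y_{\eta_i};L_\varrho)\bigr)\cdot\vol\bigl(H^{2r_1+2-q}_{\free,\mp}(Y_{\eta_i};L_\varrho)\bigr)=1,
\]
i.e.\ $|\det P_0|=1$ on the nose. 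The left inequality in \eqref{estim7} then follows immediately since $N_i\ge 1$. This is precisely the paper's route; your proposed estimate $|\det P_0|\le N_i^{b_{q,\pm}/2}$, even if it could be salvaged, is both unproved and unnecessarily weak.
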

\begin{proof}
Since $\pi_i^*H^q_{\free,\pm}(Y_{[\eta_i]};L_\varrho)$ is a sublattice of $H^q_{\free,\pm}(Y_{\eta_i};L_\varrho)$, we see that
\begin{equation}
\begin{aligned}
\vol(H^q_{\free,\pm}(Y_{\eta_i};L_\varrho) \le \vol(\pi_i^*H^q_{\free,\pm}(Y_{[\eta_i]};L_\varrho))&= \lrp{\frac{\vol(Y_{\eta_i})}{\vol(Y_{[\eta_i]})}}^{\frac{b_{q,\pm}}2}\vol(H^q_{\free,\pm}(Y_{[\eta_i]};L_\varrho)) \\
  & = [\Gamma(\mathfrak{n}_1)_{[\eta_i]}:\Gamma(\mathfrak{n}_i)_{\eta_i}]^{ \frac{b_{q,\pm}}2} \vol(H^q_{\free,\pm}(Y_{[\eta_i]};L_\varrho)),
\end{aligned}
\label{eg.15}\end{equation}
giving the inequality on the right.  For the inequality on the left, recall that the representations $\varrho_{m,n}$ are self-dual, hence the flat vector bundle $\cE_\varrho$ is naturally self-dual both as a flat vector bundle and as a Hermitian vector bundle.  Using our assumption that $L_i^*\cong L_i$, Poincar\'e duality therefore induces an isomorphism 
$$
      H^{2r_1+2-q}_{\free,\mp}(Y_{\eta_i};L_\varrho)^*=H^q_{\free,\pm}(Y_{\eta_i};L_\varrho)
$$
as well as an isometry 
$$
    H^{2r_1+2-q}_{\mp}(Y_{\eta_i};\cE_\varrho)^*=H^q_{\pm}(Y_{\eta_i};\cE_\varrho),
$$
where we recall that $\dim Y_{\eta_i}= 2r_1+3r_2-1=2r_1+2$ since we assume that $r_2=1$.  Hence, we see that
$$
    \vol(H^{2r_1+2-q}_{\free,\mp}(Y_{\eta_i};L_\varrho)^* )= \vol (H^q_{\pm}(Y_{\eta_i};L_\varrho)).
$$
Since essentially by definition,
$$
 \vol(H^{2r_1+2-q}_{\free,\mp}(Y_{\eta_i};L_\varrho)^* )=  \vol(H^{2r_1+2-q}_{\free,\mp}(Y_{\eta_i};L_\varrho) )^{-1},
$$
we see from \eqref{eg.15} that
$$
\begin{aligned}
\vol(H^q_{\free,\pm}(Y_{\eta_i};L_\varrho)) &= \frac{1}{\vol(H^{2r_1+2-q}_{\free,\mp}(Y_{\eta_i};L_\varrho) )}  \\
     &\ge  \frac{1}{\vol(H^{2r_1+2-q}_{\free,\mp}(Y_{[\eta_i]};L_\varrho) ) [\Gamma(\mathfrak{n}_1)_{[\eta_i]}:\Gamma(\mathfrak{n}_i)_{\eta_i}]^{ \frac{b_{2r_1+2-q,\mp}}2   }  },
\end{aligned}
$$
so the result follows by noticing that $b_{2r_1+2-q,\mp}=b_{q,\pm}$ by Poincar\'e duality.

\end{proof}

We also need to control the number of cusp ends.  Proceeding as in \cite[Proposition~8.6]{MP2014}, this can be achieved as follows.
\begin{proposition}
The sequence \eqref{ideals} is such that 
\begin{equation}
    \lim_{i\to \infty} \frac{ \#\mathfrak{P}_{\Gamma(\mathfrak{n}_i)}+ \sum_{\eta_i\in \mathfrak{P}_{\Gamma(\mathfrak{n}_i)} }  \log[\Gamma(\mathfrak{n}_1)_{[\eta_i]}:\Gamma(\mathfrak{n}_i)_{\eta_i}] }{[\Gamma(\mathfrak{n}_1):\Gamma(\mathfrak{n}_i)]}=0.
\label{eg.24b}\end{equation}
\label{eg.26}\end{proposition}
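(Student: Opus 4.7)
The plan is to follow the template of \cite[Proposition~8.6]{MP2014}, showing that both the number of cusps $\#\mathfrak{P}_{\Gamma(\mathfrak{n}_i)}$ and each cusp-stabilizer index $[\Gamma(\mathfrak{n}_1)_{[\eta_i]}:\Gamma(\mathfrak{n}_i)_{\eta_i}]$ grow at most polynomially in $N(\mathfrak{n}_i)$, with the product of the cusp count and the logarithm of the stabilizer indices growing strictly slower than the full index $[\Gamma(\mathfrak{n}_1):\Gamma(\mathfrak{n}_i)]$.  By strong approximation, the latter equals $|\SL(2,\cO_{\bbK}/\mathfrak{n}_i)|/|\SL(2,\cO_{\bbK}/\mathfrak{n}_1)|$ and is bounded below by $c\,N(\mathfrak{n}_i)^3$ for some $c=c(\bbK,\mathfrak{n}_1)>0$, so it will suffice to bound the numerator of \eqref{eg.24b} by a polynomial in $N(\mathfrak{n}_i)$ of degree strictly smaller than $3$, up to logarithmic factors.

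First I would bound the number of cusps.  Using that $\Gamma(\mathfrak{n}_i)$ is normal in $\SL(2,\cO_{\bbK})$, the natural projection $\Gamma(\mathfrak{n}_i)\setminus\bbP^1(\bbK)\to \SL(2,\cO_{\bbK})\setminus\bbP^1(\bbK)$ is a map onto an $h_{\bbK}$-element set (indexed by ideal classes) whose fibre above each class has cardinality at most $|\SL(2,\cO_{\bbK}/\mathfrak{n}_i)|/|B\cap\SL(2,\cO_{\bbK}/\mathfrak{n}_i)|$.  A direct count using the factorization $B=T\ltimes N$, with $|T(\cO_{\bbK}/\mathfrak{n}_i)\cap\SL_2|=\varphi_{\bbK}(\mathfrak{n}_i)$ and $|N(\cO_{\bbK}/\mathfrak{n}_i)|=N(\mathfrak{n}_i)$, then gives
$$
\#\mathfrak{P}_{\Gamma(\mathfrak{n}_i)}\le h_{\bbK}\,N(\mathfrak{n}_i)\prod_{\mathfrak{p}\mid\mathfrak{n}_i}(1+N(\mathfrak{p})^{-1})=O_{\varepsilon}(N(\mathfrak{n}_i)^{1+\varepsilon})
$$
for every $\varepsilon>0$.

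Second, I would bound each stabilizer index by observing that reduction modulo $\mathfrak{n}_i$ embeds $\Gamma(\mathfrak{n}_1)_{[\eta_i]}/\Gamma(\mathfrak{n}_i)_{\eta_i}$ injectively into the image of $P_{[\eta_i]}(\cO_{\bbK})\cap\SL(2,\cO_{\bbK})$ in $\SL(2,\cO_{\bbK}/\mathfrak{n}_i)/\SL(2,\cO_{\bbK}/\mathfrak{n}_1)$, which in turn is contained in $B\cap\SL(2,\cO_{\bbK}/\mathfrak{n}_i)$ and therefore has cardinality at most $\varphi_{\bbK}(\mathfrak{n}_i)\,N(\mathfrak{n}_i)=O(N(\mathfrak{n}_i)^2)$.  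Taking logarithms yields the uniform bound $\log[\Gamma(\mathfrak{n}_1)_{[\eta_i]}:\Gamma(\mathfrak{n}_i)_{\eta_i}]=O(\log N(\mathfrak{n}_i))$.  Combining with the cusp count, the numerator of \eqref{eg.24b} is of size $O(N(\mathfrak{n}_i)^{1+\varepsilon}\log N(\mathfrak{n}_i))$, while the denominator is at least $c\,N(\mathfrak{n}_i)^3$, so the ratio tends to $0$ as $i\to\infty$, by \eqref{ideals}.

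The main point where care is required is the stabilizer bound: the full stabilizer $\SL(2,\cO_{\bbK})_{\eta}$ is not purely unipotent but contains the image of the unit group $\cO_{\bbK}^*$ acting via the diagonal torus, which by Dirichlet's unit theorem has rank $r_1+r_2-1>0$.  One must argue that this image, already inside $\SL(2,\cO_{\bbK}/\mathfrak{n}_1)$, is a finite group of cardinality bounded independently of $i$, so that only the unipotent directions and the finite torsion part of the torus contribute to the growth in $i$.  Once this identification is in place, the polynomial-versus-cubic comparison gives the vanishing of the limit directly.
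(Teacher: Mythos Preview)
Your argument is essentially correct but takes a different, more computational route than the paper.  You bound the cusp count and the stabilizer indices separately by explicit powers of $N(\mathfrak{n}_i)$ and then compare against the cubic growth of $[\Gamma(\mathfrak{n}_1):\Gamma(\mathfrak{n}_i)]$.  The paper instead exploits a single structural identity: since $\Gamma(\mathfrak{n}_i)\trianglelefteq\Gamma(\mathfrak{n}_1)$, for each fixed cusp $\eta\in\mathfrak{P}_{\Gamma(\mathfrak{n}_1)}$ the number of cusps of $\Gamma(\mathfrak{n}_i)$ lying above $\eta$ is exactly
\[
\frac{[\Gamma(\mathfrak{n}_1):\Gamma(\mathfrak{n}_i)]}{[\Gamma(\mathfrak{n}_1)\cap P_\eta:\Gamma(\mathfrak{n}_i)\cap P_\eta]},
\]
and moreover all these cusps have the \emph{same} stabilizer index $[\Gamma(\mathfrak{n}_1)\cap P_\eta:\Gamma(\mathfrak{n}_i)\cap P_\eta]$.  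Plugging this in, the two terms in the numerator of \eqref{eg.24b} become finite sums over $\eta\in\mathfrak{P}_{\Gamma(\mathfrak{n}_1)}$ of $1/n_\eta$ and $(\log n_\eta)/n_\eta$, with $n_\eta:=[\Gamma(\mathfrak{n}_1)\cap P_\eta:\Gamma(\mathfrak{n}_i)\cap P_\eta]\to\infty$.  No power-counting is needed; the limit is immediate.

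Your approach buys explicit quantitative control (a power saving of roughly $N(\mathfrak{n}_i)^{-2+\varepsilon}$), at the cost of invoking the exact orders of $\SL_2$, $B$, and $\bbP^1$ over $\cO_{\bbK}/\mathfrak{n}_i$.  The paper's approach is shorter and requires only that each $n_\eta\to\infty$.  Your final paragraph about the unit group is a red herring: the quotient $\Gamma(\mathfrak{n}_1)_{[\eta_i]}/\Gamma(\mathfrak{n}_i)_{\eta_i}$ injects into $\SL(2,\cO_{\bbK}/\mathfrak{n}_i)$ regardless of how large the unit group is, and your bound $|B\cap\SL_2(\cO_{\bbK}/\mathfrak{n}_i)|=O(N(\mathfrak{n}_i)^2)$ already absorbs whatever the units contribute; no separate argument is needed.
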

\begin{proof}
Since $\Gamma(\mathfrak{n_i})$ is a normal subgroup of $\Gamma(\mathfrak{n}_1)$, notice that for $\eta\in \mathfrak{P}_{\Gamma(\mathfrak{n}_1)}$,
\begin{equation}
  \#\{ \eta_i\in \mathfrak{P}_{\Gamma(\mathfrak{n}_i)} \; | \;  \exists \gamma_i\in \Gamma(\mathfrak{n}_1) \; \mbox{such that} \; \eta_i= \gamma_i\eta\}= \# \{ \Gamma(\mathfrak{n}_i)\setminus\Gamma(\mathfrak{n}_1)/(\Gamma(\mathfrak{n}_1)\cap P_{\eta}) \}
\label{eg.27}\end{equation}
and 
\begin{equation}
  \# \{ \Gamma(\mathfrak{n}_i)\setminus\Gamma(\mathfrak{n}_1)/(\Gamma(\mathfrak{n}_1)\cap P_{\eta}) \}=  \frac{[\Gamma(\mathfrak{n}_1): \Gamma(\mathfrak{n}_i)]}{[\Gamma(\mathfrak{n}_1)\cap P_{\eta}: \Gamma(\mathfrak{n}_i)\cap P_{\eta}]},
\label{eg.27b}\end{equation}
so 
\begin{equation}
\begin{aligned}
\frac{\#\mathfrak{P}_{\Gamma(\mathfrak{n}_i)}}{[\Gamma(\mathfrak{n}_1): \Gamma(\mathfrak{n}_i)]} &= \sum_{\eta\in \mathfrak{P}_{\Gamma(\mathfrak{n}_1)}}  \frac{  \# \{ \Gamma(\mathfrak{n}_i)\setminus\Gamma(\mathfrak{n}_1)/(\Gamma(\mathfrak{n}_1)\cap P_{\eta}) \}  }{ [\Gamma(\mathfrak{n}_1): \Gamma(\mathfrak{n}_i)] } \\
&= \sum_{\eta\in \mathfrak{P}_{\Gamma(\mathfrak{n}_1)}}  \frac{1}{ [\Gamma(\mathfrak{n}_1)\cap P_{\eta}: \Gamma(\mathfrak{n}_i)\cap P_{\eta}] }  \longrightarrow 0 \quad \mbox{as} \; i\to\infty
\end{aligned}
\label{eg.28}\end{equation}
and
\begin{equation}
\begin{aligned}
\sum_{\eta_i\in \mathfrak{P}_{\Gamma(\mathfrak{n}_i)}} &  \frac{\log [\Gamma(\mathfrak{n}_1)_{[\eta_i]}: \Gamma(\mathfrak{n}_i)_{\eta_i}]}{[\Gamma(\mathfrak{n}_1): \Gamma(\mathfrak{n}_i)]}  \\ 
  &= \sum_{\eta\in \mathfrak{P}_{\Gamma(\mathfrak{n}_1)}}  \frac{ \#\{ \Gamma(\mathfrak{n}_i)\setminus \Gamma(\mathfrak{n}_1)/(\Gamma(\mathfrak{n}_1)\cap P_{\eta})  \}     \log [\Gamma(\mathfrak{n}_1)\cap P_{\eta}: \Gamma(\mathfrak{n}_i)\cap P_{\eta}]}{[\Gamma(\mathfrak{n}_1): \Gamma(\mathfrak{n}_i)]}  \\
 &= \sum_{\eta\in \mathfrak{P}_{\Gamma(\mathfrak{n}_1)}}  \frac{    \log [\Gamma(\mathfrak{n}_1)\cap P_{\eta}: \Gamma(\mathfrak{n}_i)\cap P_{\eta}]}{[\Gamma(\mathfrak{n}_1)\cap P_{\eta}: \Gamma(\mathfrak{n}_i)\cap P_{\eta}]} \; \longrightarrow 0 \quad \mbox{as} \; i\to\infty.
\end{aligned}
\label{eg.29}\end{equation}

\end{proof}

Now, consider the long exact sequence in cohomology
\begin{equation}
\xymatrix{
\cdots \ar[r]^-{\pa}& H^q(\bX_i,\pa \bX_i;L_\varrho) \ar[r] & H^q(\bX_i;L_\varrho) \ar[r] & H^q(\pa \bX_i; L_\varrho) \ar[r]^-{\pa} & \cdots
}
\label{eg.16}\end{equation}
associated to the pair $(\bX_i,\pa\bX_i)$, as well as its version tensored over the reals 
\begin{equation}
\xymatrix{
\cdots \ar[r]^-{\pa}& H^q(\bX_i,\pa\bX_i;\cE_\varrho) \ar[r] & H^q(\bX_i;\cE_\varrho) \ar[r] & H^q(\pa \bX_i; \cE_\varrho) \ar[r]^-{\pa} & \cdots .
}
\label{eg.17}\end{equation}

The choices of orthonormal bases in \S~\ref{sev.0} for the bundles $E_{m,n}$ induce corresponding bases for the cohomology groups of the bundle $\cE_\varrho$.  Trusting this will lead to no confusion, we will still denote by $\mu_{X_i}$, $\mu_{\pa X_i}=(\mu_{i,+},\mu_{i,-})$, $\mu_{i,+}$ and $\mu_{i,-}$ the bases we obtain in this way for $H^*(\bX_i;\cE_\varrho)$, $H^*(\pa \bX_i;\cE_\varrho)$, $H^*_+(\pa \bX_i;\cE_\varrho)$ and  $H^*_-(\pa \bX_i;\cE_\varrho)$.
By Poincar\'e-Lefshetz duality \cite{Milnor1962}, we know that
\begin{equation}
   \tau(\bX_i,\cE_\varrho,\mu_{X_i})=\tau(\bX_i,\pa\bX_i,\cE_\varrho,\mu_{\bX_i,\pa\bX_i}),
\label{eg.17b}\end{equation}
where $\mu_{X_i,\pa X_i}$ is the basis of $H^*(\bX_i,\pa \bX_i;\cE_\varrho)$ dual to $\mu_{X_i}$.  As in \cite[Lemma~4.1]{MR2}, the basis $\mu_{X_i,\pa X_i}$ admits a simpler description compared to $\mu_{X_i}$.  
\begin{lemma}
In terms of the boundary homomorphism of the long exact sequence \eqref{eg.17}, 
$$
    \mu_{X_i,\pa X_i}= \pa (\mu_{i,+}).
$$
\label{eg.18}\end{lemma}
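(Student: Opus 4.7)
The plan is to exploit Poincar\'e--Lefschetz duality together with the Harder restriction result already used in the proof of Theorem~\ref{cdat.1} to identify $\pa(\mu_{i,+})$ with the basis of $H^*(\bX_i,\pa\bX_i;\cE_\varrho)$ dual to $\mu_{X_i}$. First, since $\varrho_\infty$ decomposes into irreducible representations which are not self-conjugate, the argument of \eqref{vanish-cohom} in the proof of Theorem~\ref{cdat.1} forces $H^*_{(2)}(X_i;\cE_\varrho)=0$, so that $\mu_{X_i}=\mu_{X_i,\operatorname{inf}}$. Combined with Harder's identification $\Im(\iota^*_{\pa\bX_i})=H^*_-(\pa\bX_i;\cE_\varrho)$ from \eqref{harder.3} and with the normalization \eqref{se.8}, this yields $\iota^*_{\pa\bX_i}\mu_{X_i}=\mu_{i,-}$ in cohomology. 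Exactness of \eqref{eg.17} then gives $\ker\pa=H^*_-(\pa\bX_i;\cE_\varrho)$, so $\pa$ is injective on the span of $\mu_{i,+}$; a dimension count using Poincar\'e--Lefschetz duality and the vanishing of $H^*_{(2)}(X_i;\cE_\varrho)$ shows that $\pa$ maps $H^q_+(\pa\bX_i;\cE_\varrho)$ isomorphically onto $H^{q+1}(\bX_i,\pa\bX_i;\cE_\varrho)$, ensuring that the families $\pa(\mu_{i,+})$ and $\mu_{X_i,\pa\bX_i}$ have matching cardinality in each degree.

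The heart of the argument is then a Stokes computation of the Poincar\'e--Lefschetz pairing $P(\alpha,\beta)=\int_{\bX_i}\alpha\wedge\beta$ on $H^{q+1}(\bX_i,\pa\bX_i;\cE_\varrho)\times H^{n-q-1}(\bX_i;\cE_\varrho)$, where $n=\dim\bX_i$ and we use the self-duality of $\cE_\varrho$ as a flat Hermitian bundle. For $\gamma\in\mu_{i,+}$ of degree $q$ and $\beta\in\mu_{X_i}$ of degree $n-q-1$, I would pick a smooth extension $\tilde\gamma$ of $\gamma$ to $\bX_i$, so that $\pa\gamma$ is represented by the relative class $[d\tilde\gamma]$. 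Since $d\beta=0$, Stokes' theorem reduces the bulk integral $\int_{\bX_i}d\tilde\gamma\wedge\beta$ to the boundary integral $\int_{\pa\bX_i}\gamma\wedge\iota^*_{\pa\bX_i}\beta$, which is precisely the Poincar\'e pairing on $\pa\bX_i$ between $\gamma\in H^*_+(\pa\bX_i;\cE_\varrho)$ and $\iota^*_{\pa\bX_i}\beta\in\mu_{i,-}$. Because $\mu_{i,+}$ was chosen to be Poincar\'e dual to $\mu_{i,-}$ on $\pa\bX_i$, this pairing is a Kronecker delta, so $\pa(\mu_{i,+})$ is the basis of $H^*(\bX_i,\pa\bX_i;\cE_\varrho)$ dual to $\mu_{X_i}$ under $P$, which by definition is $\mu_{X_i,\pa\bX_i}$.

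The main technical hurdle is really the bookkeeping: one must track the degree shift induced by $\pa$, verify that the non-degenerate pairing between $H^*_+$ and $H^*_-$ on $\pa\bX_i$ is faithfully represented by the chosen bases $\mu_{i,\pm}$, and control the signs coming from Stokes' theorem and from the conventions defining the Poincar\'e--Lefschetz dual basis. None of this is deep once the isomorphism $\pa\colon H^*_+(\pa\bX_i;\cE_\varrho)\xrightarrow{\cong}H^{*+1}(\bX_i,\pa\bX_i;\cE_\varrho)$ is in hand, which in turn rests on combining Harder's image theorem \eqref{harder.3} with the vanishing of $L^2$-cohomology coming from the non-self-conjugacy hypothesis on $\varrho_\infty$.
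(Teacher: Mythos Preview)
Your approach is essentially the same as the paper's, which also rests on Poincar\'e--Lefschetz duality together with the Harder isomorphism and the fact that $\mu_{i,+}$ is Poincar\'e dual to $\mu_{i,-}$ on $\pa\bX_i$; you have simply unpacked the duality statement via an explicit Stokes computation, whereas the paper's proof is a two-line sketch invoking $(H^*_{\pm}(\pa X_i;\cE_\varrho))^*\cong H^*_{\mp}(\pa X_i;\cE_\varrho)$ and the isomorphism $\pr_{i,-}\circ\iota_{\pa X_i}$.

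One small correction: you write $\Im(\iota^*_{\pa\bX_i})=H^*_-(\pa\bX_i;\cE_\varrho)$ and $\iota^*_{\pa\bX_i}\mu_{X_i}=\mu_{i,-}$, but \eqref{harder.3} and \eqref{se.8} only assert this \emph{after} applying $\pr_-$; the image of $\iota^*_{\pa\bX_i}$ is in general a graph over $H^*_-$ inside $H^*(\pa\bX_i;\cE_\varrho)$, not $H^*_-$ itself. This does not break your argument, because in the setting of \S\ref{eg.0} (namely $r_1>0$, $r_2=1$) the subspaces $H^*_\pm$ are isotropic for the Poincar\'e pairing on $\pa\bX_i$: elements of $H^*_+$ have vertical degree at most $r_2$ and elements of $H^*_-$ have vertical degree at least $r_1+r_2$ (Proposition~\ref{eg.13}), so two elements of $H^*_+$ wedge to something of vertical degree $<d_{\bbK}$ and integrate to zero. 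Hence in your Stokes integral $\int_{\pa\bX_i}\gamma\wedge\iota^*_{\pa\bX_i}\beta$ only the $H^*_-$-component of $\iota^*_{\pa\bX_i}\beta$ contributes, and that component is exactly $\mu_{i,-}$ by \eqref{se.8}. It would be worth stating this isotropy explicitly, since it is what makes the slight inaccuracy harmless and is implicit in the paper's identification $(H^*_\pm)^*\cong H^*_\mp$.
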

\begin{proof}
Since Poincar\'e duality induces the identification
$$
      (H^*_{\pm}(\pa \bX_i;\cE_\varrho))^*\cong H^*_{\mp}(\pa \bX_i;\cE_\varrho),
$$
the result follows by observing that the map $\pr_{i,-}\circ \iota_{\pa \bX_i}: H^*(\bX_i;\cE_\varrho)\to H^*_-(\pa \bX_i;\cE_\varrho)$ is an isomorphism (this is in agreement with \cite[Theorem~4.6.3]{Harder}), where
$$
    \pr_{i,-}: H^*(\pa \bX_i;\cE_\varrho)\to H^*_{-}(\pa \bX_i;\cE_\varrho)
$$
is the orthogonal projection induced by the orthogonal decomposition \eqref{eg.12a}.
\end{proof}

By \cite[(1.4)]{Cheeger1979}, we know that
\begin{equation}
\tau(\bX_i,\pa \bX_i,\cE_\varrho,\mu_{X_i,\pa X_i})^2= \prod_q \lrp{ \frac{|H^q_{\tor}(\bX_i,\pa \bX_i;L_\varrho)|}{\vol_{\mu^{\bbR}_{X_i,\pa X_i}}(H^q_{\free}(\bX_i,\pa\bX_i;L_\varrho))}    }^{(-1)^{q+1}},
\label{eg.19}\end{equation}
where $\mu^{\bbR}_{X_i,\pa X_i}= \{ \mu_{X_i,\pa X_i}, \sqrt{-1} \mu_{X_i,\pa X_i}\}$ is the real basis associated to the complex basis $\mu_{X_i,\pa X_i}$ and $\vol_{\mu^{\bbR}_{X_i,\pa X_i}}(H^q_{\free}(\bX_i,\pa\bX_i;L_\varrho))$ is the covolume of the lattice $H^q_{\free}(\bX_i,\pa\bX_i;L_\varrho)$ in $H^q(\bX_i,\pa\bX_i;\cE_\varrho)$ with respect to the basis $\mu^{\bbR}_{X_i,\pa X_i}$.  Since $\mu_{X_i,\pa X_i}= \pa (\mu_{i,+})$, this covolume can be rewritten
\begin{equation}
\vol_{\mu^{\bbR}_{X_i,\pa X_i}}(H^q_{\free}(\bX_i,\pa\bX_i;L_\varrho))= \frac{  \vol_{\mu^{\bbR}_{i,+}}( H^{q-1}_{\free,+}(\pa \bX_i;L_\varrho)  )      }{ [H^q_{\free}(\bX_i,\pa \bX_i;L_\varrho):\pa H^{q-1}_{\free,+}(\pa \bX_i;L_\varrho)] } 
\label{eg.20}\end{equation}
with $\mu^{\bbR}_{i,+}=\{\mu_{i,+}, \sqrt{-1}\mu_{i,+}\}$ the real basis associated to the complex basis $\mu_{i,+}$.  On the other hand, from the long exact sequence \eqref{eg.16}, we see that
$$
\begin{aligned}
1\le [H^q_{\free}(\bX_i,\pa \bX_i;L_\varrho):\pa H^{q-1}_{\free,+}(\pa \bX_i;L_\varrho)] &\le |H^q_{\tor}(\bX_i;L_\varrho)|  \\
    &= |H^{2r_1+4-q}_{\tor}(\bX_i,\pa \bX_i;L_\varrho)|,
\end{aligned}
$$
where in in the second line we have used the fact that
\begin{equation}
  H^q_{\tor}(\bX_i;L_\varrho)\cong H_{q-1}(\bX_i;L_\varrho)_{\tor}\cong H^{2r_1+4-q}_{\tor}(\bX_i,\pa \bX_i, L_\varrho),
\label{eg.21}\end{equation}
which is itself a consequence of the universal coefficient theorem and Poincar\'e-Lefshetz duality.  Consequently, we deduce that 
\begin{equation}
  \frac{  \vol_{\mu^{\bbR}_{i,+}}( H^{q-1}_{\free,+}(\pa \bX_i;L_\varrho)) }{ | H^{2r_1+4-q}_{\tor}(\bX_i,\pa \bX_i, L_\varrho) |  } \le \vol_{\mu^{\bbR}_{X_i,\pa X_i}}(H^q_{\free}(\bX_i,\pa\bX_i;L_\varrho)).
\label{eg.22}\end{equation}
Therefore, using \eqref{eg.17b} and inserting the above inequalities in \eqref{eg.19} using \eqref{eg.21} and the fact that $| H^{2r_1+4-q}_{\tor}(\bX_i,\pa \bX_i, L_\varrho) | \ge 1$, we see that
\begin{equation}
\begin{aligned}
 \tau(\bX_i,\cE_\varrho,\mu_{X_i})^{2(-1)^{r_1+1}}  &=\tau(\bX,\pa\bX,\cE_\varrho,\mu_{\bX_i,\pa\bX_i})^{2(-1)^{r_1+1}} \\
    &\le  \frac{  \lrp{ \prod_{q+r_1 \even} | H^q_{\tor}(\bX_i,\pa \bX_i;L_\varrho)  |^2   }    }{   \lrp{  \prod_q \vol_{\mu_{i,+}^{\bbR}}( H^{q-1}_{\free,+}(\pa \bX_i;L_\varrho)  )^{(-1)^{q+r_1}}     }  }.
\end{aligned}
\label{eg.23}\end{equation}

Combined with Theorem~\ref{se.21} and Proposition~\ref{prop-appr-l2-tor}, this inequality yields the following result, namely Theorem~\ref{int.4} when the flat vector bundle $\cE_{\varrho}$ is not necessarily acyclic.
\begin{theorem}
Let $\bbK$ be a number field with $r_2=1$ and $r_1>1$.  Let \eqref{qrr.1} be a $\bbQ$-rational representation and assume that $\varrho_{\infty}$ decompose into a sum of irreducible representations that are not self-conjugate.  Let $\Lambda_\varrho\subset V_\varrho$ be a $\SL(2,\cO_{\bbK})$-invariant lattice.  If the natural isomorphism $V_{\varrho}^*\cong V_{\varrho}$ induces an isomorphism $\Lambda_{\varrho}^*\cong \Lambda_{\varrho}$, then 
\begin{equation}
  \liminf_{i\to\infty} \frac{ \sum_{q+r_1 \even} \log |H^q_{\tor}(\Gamma(\mathfrak{n}_i);\Lambda_{\varrho})|   }{ [\Gamma(\mathfrak{n}_1):\Gamma(\mathfrak{n}_i)] } \ge (-1)^{r_1+1}t^{(2)}_{\tX}(\varrho_\infty)\vol(X_1)>0.
\label{eg.24c}\end{equation}
\label{eg.24}\end{theorem}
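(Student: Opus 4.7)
The plan is to combine the identification of analytic with Reidemeister torsion from Theorem~\ref{se.21}, the $L^2$-torsion approximation of Proposition~\ref{prop-appr-l2-tor}, and the Cheeger-type inequality \eqref{eg.23}, using Propositions~\ref{eg.14} and \ref{eg.26} to control the boundary contributions.

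First I would check that the hypotheses of Theorem~\ref{se.21} hold for each irreducible constituent $\tau_k$ of $\varrho_{\infty}$: since $r_2=1$ is odd and $r_1+r_2-1=r_1>1$, and each $\tau_k$ is (by Proposition~\ref{prop-repr1} and the non-self-conjugacy hypothesis) of the form $\varrho_{m,n}$ with the parameters required by Theorem~\ref{se.21}, that theorem applies constituent by constituent. Since both analytic torsion and Reidemeister torsion are multiplicative in orthogonal direct sums of flat Hermitian bundles, summing yields
$$
\log T(X_i,\cE_{\varrho},g_{\fc},h)=\log\tau(\bX_i,\cE_{\varrho},\mu_{X_i}),
$$
where $\mu_{X_i}$ is assembled from the bases \eqref{se.8} for the individual constituents (the inner part is void by \eqref{vanish-cohom} since each $\tau_k$ is non-self-conjugate). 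Combining with Proposition~\ref{prop-appr-l2-tor} applied to each $\tau_k$ and summing produces
$$
\lim_{i\to\infty}\frac{\log\tau(\bX_i,\cE_{\varrho},\mu_{X_i})}{[\Gamma(\mathfrak{n}_1):\Gamma(\mathfrak{n}_i)]}=t^{(2)}_{\widetilde X}(\varrho_{\infty})\vol(X_1),
$$
which by \cite[Proposition~5.2]{BV} is strictly positive after multiplication by $(-1)^{r_1+1}$, since $\delta(\widetilde X)=1$ and $\dim\widetilde X=2r_1+3$.

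Next I would take the logarithm of \eqref{eg.23}, divide by $[\Gamma(\mathfrak{n}_1):\Gamma(\mathfrak{n}_i)]$, and take $\liminf$ as $i\to\infty$. The left-hand side converges to $2(-1)^{r_1+1}t^{(2)}_{\widetilde X}(\varrho_{\infty})\vol(X_1)>0$ by the previous paragraph. For the first term on the right, I would apply the Poincar\'e--Lefschetz isomorphism \eqref{eg.21} together with the observation that the parity of $q+r_1$ is preserved under $q\mapsto 2r_1+4-q$ (both map to $r_1\pmod 2$), to rewrite
$$
\sum_{q+r_1\;\operatorname{even}}\log|H^q_{\tor}(\bX_i,\pa\bX_i;L_{\varrho})|=\sum_{q+r_1\;\operatorname{even}}\log|H^q_{\tor}(\bX_i;L_{\varrho})|.
$$

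The main obstacle is showing that the boundary covolume contribution
$$
\frac{1}{[\Gamma(\mathfrak{n}_1):\Gamma(\mathfrak{n}_i)]}\sum_{q}(-1)^{q+r_1}\log\vol_{\mu_{i,+}^{\bbR}}\bigl(H^{q-1}_{\free,+}(\pa X_i;L_{\varrho})\bigr)
$$
tends to zero. For this I would use the two-sided estimate \eqref{estim7} of Proposition~\ref{eg.14}, whose lower bound crucially requires the self-duality assumption $\Lambda_{\varrho}^{*}\cong\Lambda_{\varrho}$; on each cusp cross-section $Y_{\eta_i}$, this gives a bound of the form
$$
\bigl|\log\vol\bigl(H^{q-1}_{\free,+}(Y_{\eta_i};L_{\varrho})\bigr)\bigr|\le C_1+C_2\log[\Gamma(\mathfrak{n}_1)_{[\eta_i]}:\Gamma(\mathfrak{n}_i)_{\eta_i}],
$$
with constants $C_1,C_2$ depending only on the finitely many cusp classes at level $\mathfrak{n}_1$. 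Summing over $\eta_i\in\mathfrak{P}_{\Gamma(\mathfrak{n}_i)}$, dividing by the index, and invoking Proposition~\ref{eg.26} shows the whole boundary contribution vanishes in the limit. Assembling the three pieces and dividing by $2$ yields the claimed inequality \eqref{eg.24c}.
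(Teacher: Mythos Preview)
Your proposal is correct and follows essentially the same approach as the paper: combine Theorem~\ref{se.21} with Proposition~\ref{prop-appr-l2-tor} to get the limit of the normalized Reidemeister torsion, feed this into the logarithm of \eqref{eg.23}, and kill the boundary covolume term using Propositions~\ref{eg.14} and~\ref{eg.26}. Your write-up is in fact more explicit than the paper's on two points the paper leaves implicit: the parity check that $q\mapsto 2r_1+4-q$ preserves the condition $q+r_1$ even (so \eqref{eg.21} converts the relative torsion sum into the absolute one), and the constituent-by-constituent application of Theorem~\ref{se.21}. One small quibble: the appeal to Proposition~\ref{prop-repr1} is unnecessary here, since for $r_1>0$ Theorem~\ref{se.21} applies to every $\varrho_{m,n}$ without restriction on the parameters, and the non-self-conjugacy hypothesis alone is what drives both \eqref{vanish-cohom} and Proposition~\ref{prop-appr-l2-tor}.
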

\begin{proof}
As in the proof of Theorem~\ref{eg.9}, we know that \eqref{appr-l2-tor-1} holds, while by \cite[Proposition~5.2]{BV}
$$
(-1)^{r_1+1}t^{(2)}_{\tX}(\varrho_\infty)\vol(X_1)>0.
$$
On the other hand, combining Proposition~\ref{eg.14} with Proposition~\ref{eg.26}, we see that
\begin{equation}
  \lim_{i\to \infty}  \frac{  \sum_{q} (-1)^{q+r_1} \log(\vol_{\mu^{\bbR}_{i,+}}( H^q_{\free,+}(\pa \bX_i;L_\varrho)))  }{ [\Gamma(\mathfrak{n}_1):\Gamma(\mathfrak{n}_i)] }=0.
  \label{eg.25}\end{equation}
We note that by \eqref{cdat.2b} and Lemmas \ref{dec.5e} and \ref{dec.5h} the
exponent $b_{q,\pm}$ occuring in \eqref{estim7} does not depend on $i$.
Hence, \eqref{eg.24c} follows by combining \eqref{eg.23} with Theorem~\ref{se.21} together with \eqref{appr-l2-tor-1} and \eqref{eg.25}.

\end{proof}

\bibliographystyle{amsalpha}
\bibliography{torsion_fc}

\providecommand{\bysame}{\leavevmode\hbox to3em{\hrulefill}\thinspace}
\providecommand{\MR}{\relax\ifhmode\unskip\space\fi MR }
\providecommand{\MRhref}[2]{%
  \href{http://www.ams.org/mathscinet-getitem?mr=#1}{#2}
}
\providecommand{\href}[2]{#2}
\begin{thebibliography}{AGMY20}

\bibitem[AGMY20]{AGMY}
A.~Ash, P.~E. Gunnells, M.~McConnell, and D.~Yasaki, \emph{On the growth of
  torsion in the cohomology of arithmetic groups}, J. Inst. Math. Jussieu
  \textbf{19} (2020), no.~2, 537--569. \MR{4079152}

\bibitem[Alb09]{Alb}
Pierre Albin, \emph{Renormalizing curvature integrals on
  {P}oincar\'e-{E}instein manifolds.}, Adv. Math. \textbf{221} (2009), no.~1,
  140--169.

\bibitem[ARS18]{ARS2}
Pierre Albin, Fr\'{e}d\'{e}ric Rochon, and David Sher, \emph{Analytic torsion
  and {R}-torsion of {W}itt representations on manifolds with cusps}, Duke
  Math. J. \textbf{167} (2018), no.~10, 1883--1950. \MR{3827813}

\bibitem[ARS21]{ARS1}
\bysame, \emph{Resolvent, heat kernel, and torsion under degeneration to
  fibered cusps}, Mem. Amer. Math. Soc. \textbf{269} (2021), no.~1314, v+126.
  \MR{4234679}

\bibitem[Ash92]{As}
Avner Ash, \emph{Galois representations attached to mod {$p$} cohomology of
  {${\rm GL}(n,{\bf Z})$}}, Duke Math. J. \textbf{65} (1992), no.~2, 235--255.
  \MR{1150586}

\bibitem[BGV04]{BGV}
Nicole Berline, Ezra Getzler, and Mich{\`e}le Vergne, \emph{Heat kernels and
  {D}irac operators}, Grundlehren Text Editions, Springer-Verlag, Berlin, 2004,
  Corrected reprint of the 1992 original.

\bibitem[Bor74]{Borel1974}
Armand Borel, \emph{Stable real cohomology or arithmetic groups}, Ann.
  scientif. ENS \textbf{7} (1974), 235--272.

\bibitem[BT65]{Borel-Tits}
Armand Borel and Jacques Tits, \emph{Groupes reductifs}, Publ. math. de l
  I.H.E.S. \textbf{27} (1965), 55--151.

\bibitem[BT82]{Bott-Tu}
Raoul Bott and Loring~W. Tu, \emph{Differential forms in algebraic topology},
  Graduate Texts in Mathematics, vol.~82, Springer-Verlag, New York-Berlin,
  1982. \MR{658304}

\bibitem[BV13]{BV}
Nicolas Bergeron and Akshay Venkatesh, \emph{The asymptotic growth of torsion
  homology for arithmetic groups}, J. Inst. Math. Jussieu \textbf{12} (2013),
  no.~2, 391--447.

\bibitem[BZ92]{Bismut-Zhang}
Jean-Michel Bismut and Weiping Zhang, \emph{An extension of a theorem by
  {C}heeger and {M}\"uller}, Ast\'erisque (1992), no.~205, 235, With an
  appendix by Francois Laudenbach. \MR{1185803 (93j:58138)}

\bibitem[Che79]{Cheeger1979}
Jeff Cheeger, \emph{Analytic torsion and the heat equation}, Ann. of Math. (2)
  \textbf{109} (1979), no.~2, 259--322.

\bibitem[Dar87]{Dar}
Aparna Dar, \emph{Intersection {$R$}-torsion and analytic torsion for
  pseudomanifolds}, Math. Z. \textbf{194} (1987), no.~2, 193--216.

\bibitem[Har75]{Harder}
G.~Harder, \emph{On the cohomology of discrete arithmetically defined groups},
  Discrete subgroups of {L}ie groups and applications to moduli ({I}nternat.
  {C}olloq., {B}ombay, 1973), Oxford Univ. Press, Bombay, 1975, pp.~129--160.

\bibitem[Har87]{Har}
\bysame, \emph{Eisenstein cohomology of arithmetic groups. {T}he case {${\rm
  GL}_2$}}, Invent. Math. \textbf{89} (1987), no.~1, 37--118. \MR{892187}

\bibitem[Has98]{Hassell}
Andrew Hassell, \emph{Analytic surgery and analytic torsion}, Comm. Anal. Geom.
  \textbf{6} (1998), no.~2, 255--289.

\bibitem[HHM04]{HHM}
Tam{\'a}s Hausel, Eugenie Hunsicker, and Rafe Mazzeo, \emph{Hodge cohomology of
  gravitational instantons}, Duke Math. J. \textbf{122} (2004), no.~3,
  485--548.

\bibitem[HMM95]{hmm}
Andrew Hassell, Rafe Mazzeo, and Richard~B. Melrose, \emph{Analytic surgery and
  the accumulation of eigenvalues}, Comm. Anal. Geom. \textbf{3} (1995),
  no.~1-2, 115--222.

\bibitem[Lot92]{Lott1992}
John Lott, \emph{Heat kernels on covering spaces and topological invariants},
  J. Differential Geom. \textbf{35} (1992), no.~2, 471--510. \MR{1158345}

\bibitem[Mar18]{Marcus}
Daniel~A. Marcus, \emph{Number fields}, Universitext, Springer, Cham, 2018,
  Second edition of [ MR0457396], With a foreword by Barry Mazur. \MR{3822326}

\bibitem[Mat92]{Mathai1992}
Varghese Mathai, \emph{{$L^2$}-analytic torsion}, J. Funct. Anal. \textbf{107}
  (1992), no.~2, 369--386. \MR{1172031}

\bibitem[Mat15]{Mat}
Jasmin Matz, \emph{Bounds for global coefficients in the fine geometric
  expansion of {A}rthur's trace formula for {${\rm GL}(n)$}}, Israel J. Math.
  \textbf{205} (2015), no.~1, 337--396. \MR{3314592}

\bibitem[Mel93]{MelroseAPS}
Richard~B. Melrose, \emph{The {A}tiyah-{P}atodi-{S}inger index theorem},
  Research Notes in Mathematics, vol.~4, A K Peters Ltd., Wellesley, MA, 1993.

\bibitem[Mil62]{Milnor1962}
John Milnor, \emph{A duality theorem for {R}eidemeister torsion}, Ann. of Math.
  (2) \textbf{76} (1962), 137--147. \MR{0141115}

\bibitem[Mil66]{Milnor1966}
J.~Milnor, \emph{Whitehead torsion}, Bull. Amer. Math. Soc. \textbf{72} (1966),
  358--426. \MR{0196736 (33 \#4922)}

\bibitem[Mil11]{Milne2011}
J.S. Milne, \emph{Algebraic groups, {L}ie groups, and their arithmetic
  subgroups}, Preprint, Version 3.00, 2011.

\bibitem[Mil18]{Milne2018}
\bysame, \emph{Reductive groups}, Preprint, Version 2.00, 2018.

\bibitem[MM63]{MM1963}
Yoz{\^o} Matsushima and Shingo Murakami, \emph{On vector bundle valued harmonic
  forms and automorphic forms on symmetric riemannian manifolds}, Ann. of Math.
  (2) \textbf{78} (1963), 365--416.

\bibitem[MM95]{mame1}
Rafe Mazzeo and Richard~B. Melrose, \emph{Analytic surgery and the eta
  invariant}, Geom. Funct. Anal. \textbf{5} (1995), no.~1, 14--75.

\bibitem[MM20]{MzM2}
Jasmin Matz and Werner M\"{u}ller, \emph{Approximation of {$L^2$}-analytic
  torsion for arithmetic quotients of the symmetric space {${\rm SL}(n,\Bbb
  R)/{\rm SO}(n)$}}, J. Inst. Math. Jussieu \textbf{19} (2020), no.~2,
  307--350. \MR{4079147}

\bibitem[MM23]{Matz-Muller}
\bysame, \emph{Analytic torsion for arithmetic locally symmetric manifolds and
  approximation of {$L^2$}-torsion}, J. Funct. Anal. \textbf{284} (2023),
  no.~1, Paper No. 109727. \MR{4498308}

\bibitem[MP12]{MP2012}
Werner M{\"u}ller and Jonathan Pfaff, \emph{Analytic torsion of complete
  hyperbolic manifolds of finite volume}, J. Funct. Anal. \textbf{263} (2012),
  no.~9, 2615--2675.

\bibitem[MP14]{MP2014}
Werner M\"uller and Jonathan Pfaff, \emph{The analytic torsion and its
  asymptotic behaviour for sequences of hyperbolic manifolds of finite volume},
  J. Funct. Anal. \textbf{267} (2014), no.~8, 2731--2786. \MR{3255473}

\bibitem[MR20]{MR1}
W.~Müller and F.~Rochon, \emph{Analytic torsion and {R}eidemeister torsion on
  hyperbolic manifolds with cusps}, Geom. Funct. Anal. \textbf{30} (2020),
  910--954.

\bibitem[MR21]{MR2}
Werner M\"{u}ller and Fr\'{e}d\'{e}ric Rochon, \emph{Exponential growth of
  torsion in the cohomology of arithmetic hyperbolic manifolds}, Math. Z.
  \textbf{298} (2021), no.~1-2, 79--106. \MR{4257079}

\bibitem[M{\"u}l78]{Muller1978}
Werner M{\"u}ller, \emph{Analytic torsion and {$R$}-torsion of {R}iemannian
  manifolds}, Adv. in Math. \textbf{28} (1978), no.~3, 233--305.

\bibitem[M{\"u}l93]{Muller1993}
\bysame, \emph{Analytic torsion and {$R$}-torsion for unimodular
  representations}, J. Amer. Math. Soc. \textbf{6} (1993), no.~3, 721--753.

\bibitem[Pfa14]{Pfaff2014b}
Jonathan Pfaff, \emph{Exponential growth of homological torsion for towers of
  congruence subgroups of {B}ianchi groups}, Ann. Global Anal. Geom.
  \textbf{45} (2014), no.~4, 267--285.

\bibitem[RS71]{Ray-Singer}
D.~B. Ray and I.~M. Singer, \emph{{$R$}-torsion and the {L}aplacian on
  {R}iemannian manifolds}, Advances in Math. \textbf{7} (1971), 145--210.

\bibitem[Sch15]{Sch}
Peter Scholze, \emph{On torsion in the cohomology of locally symmetric
  varieties}, Ann. of Math. (2) \textbf{182} (2015), no.~3, 945--1066.
  \MR{3418533}

\bibitem[Shi63]{Shimizu}
Hideo Shimizu, \emph{On discontinuous groups operating on the product of the
  upper half planes}, Ann. of Math. (2) \textbf{77} (1963), 33--71. \MR{145106}

\bibitem[Wei82]{We}
Andr\'{e} Weil, \emph{Adeles and algebraic groups}, Progress in Mathematics,
  vol.~23, Birkh\"{a}user, Boston, Mass., 1982, With appendices by M. Demazure
  and Takashi Ono. \MR{670072}

\end{thebibliography}

\end{document}